\DeclareMathOperator{\id}{id}
\DeclareMathOperator{\Vect}{Vect}
\DeclareMathOperator{\der}{der}
\DeclareMathOperator{\N}{N}
\DeclareMathOperator{\BD}{BD}
\DeclareMathOperator{\conv}{conv}
\DeclareMathOperator{\Gal}{Gal}
\DeclareMathOperator{\GL}{GL}
\DeclareMathOperator{\Spec}{Spec}
\DeclareMathOperator{\sss}{ss}
\DeclareMathOperator{\Gr}{Gr}
\DeclareMathOperator{\Tr}{Tr}
\DeclareMathOperator{\IC}{IC}
\let\phi\varphi
\theoremstyle{definition}
\newtheorem{defn}{Definition}[section]
\newtheorem{rmrk}[defn]{Remark}
\theoremstyle{plain}
\newtheorem{thm}[defn]{Theorem}
\newtheorem{prop}[defn]{Proposition}
\newtheorem{lem}[defn]{Lemma}
\newtheorem{cor}[defn]{Corollary}
\newcommand\restr[2]{{
  \left.\kern-\nulldelimiterspace 
  #1 
  \vphantom{\big|} 
  \right|_{#2} 
  }}
  \newcommand\simtimes{\mathbin{%
    \stackrel{\sim}{\smash{\times}\rule{0pt}{0.8ex}}%
    }}
\newenvironment{myproof1}[1][\proofname] {\par\pushQED{\qed}\normalfont\topsep6\p@\@plus6\p@\relax\trivlist\item[\hskip\labelsep\bfseries#1\@addpunct{\textbf{.}}]\ignorespaces}{\popQED\endtrivlist\@endpefalse}
\title[Central elements in affine mod $p$ Hecke algebras via Perverse $\mathbb{F}_p$-sheaves]{Central elements in affine mod $p$ Hecke algebras \protect\\ via Perverse $\mathbb{F}_p$-sheaves}
\author{Robert Cass}
\date{\today}
\address{Department of Mathematics, California Institute of Technology}
\email{rcass@caltech.edu}
\numberwithin{equation}{section}
\begin{document}

\begin{abstract}
Let $G$ be a split connected reductive group over a finite field of characteristic $p > 2$ such that $G_\text{der}$ is absolutely almost simple. We give a geometric construction of perverse $\mathbb{F}_p$-sheaves on the Iwahori affine flag variety of $G$ which are central with respect to the convolution product. We deduce an explicit formula for an isomorphism from the spherical mod $p$ Hecke algebra to the center of the Iwahori mod $p$ Hecke algebra. We also give a formula for the central integral Bernstein elements in the Iwahori mod $p$ Hecke algebra. To accomplish these goals we construct a nearby cycles functor for perverse $\mathbb{F}_p$-sheaves and we use Frobenius splitting techniques to prove some properties of this functor. We also prove that certain equal characteristic analogues of local models of Shimura varieties are strongly $F$-regular, and hence they are $F$-rational and have pseudo-rational singularities.
\end{abstract}

\maketitle

\thispagestyle{empty}

\tableofcontents \newpage
\section{Introduction} \label{Intro}
\subsection{Motivation} \label{motsec}
Let $E$ be a local field of characteristic $p > 0$ with ring of integers $\mathcal{O}_E$ and residue field $\mathbb{F}_q$ where $q$ is a power of $p$. Let $G$ be a split connected reductive group defined over $E$, and fix a maximal torus and a Borel subgroup $T \subset B \subset G$. Let $X_*(T)$ be the group of cocharacters of $G$ and let $X_*(T)^+$ be the monoid of dominant cocharacters. For a compact open subgroup $H \subset G(E)$ let $$\mathcal{H}_H := \{f \colon G(E) \rightarrow \mathbb{F}_p \: : \: f \text{ has compact support and is } H \text{ bi-invariant}\}.$$ The multiplication on $\mathcal{H}_H$ is by convolution of functions. The mod $p$ Hecke algebras $\mathcal{H}_H$ are important in the study of smooth admissible representations of $G(E)$.

As $G$ is split we may let  $K = G(\mathcal{O}_E)$. Then Herzig \cite{modpsatake} (and Henniart--Vign\'{e}ras \cite{modpsatake2} for more general coefficients) constructed a mod $p$ Satake isomorphism $$\mathcal{S} \colon \mathcal{H}_{K} \xrightarrow{\sim} \mathbb{F}_p[X_*(T)^+].$$ We note that the mod $p$ Satake isomorphism is related to the usual Satake isomorphism by an integral Satake isomorphism constructed by Zhu in \cite{ZhuSat}. In \cite{modpGr} we constructed a symmetric monoidal category of perverse $\mathbb{F}_p$-sheaves on the affine Grassmannian of $G$ which gives a geometrization of the inverse of $\mathcal{S}$.

Now let $I \subset K$ be the Iwahori subgroup determined by $B$ and let $Z(\mathcal{H}_I)$ be the center of $\mathcal{H}_I$. If $\mathds{1}_K \in \mathcal{H}_I$ is the function which is $1$ on $K$ and $0$ elsewhere then by work of Vign\'{e}ras \cite{Vigneras2005} and Ollivier \cite{Ollivier2014} there is an isomorphism of $\mathbb{F}_p$-algebras $$\mathcal{C} \colon Z(\mathcal{H}_I) \xrightarrow{\sim} \mathcal{H}_K, \quad f \mapsto f* \mathds{1}_K.$$
In this paper we will construct a functor on perverse $\mathbb{F}_p$-sheaves which geometrizes the inverse of $\mathcal{C}$. This allows us to give geometric proofs of certain combinatorial identities in Iwahori mod $p$ Hecke algebras.

This paper is the next step in a project aimed at providing a categorification of the representation theory of affine mod $p$ Hecke algebras. In future joint work with C. P\'{e}pin and T. Schmidt we plan to apply the results in this paper as well as \cite{PS20} to construct a mod $p$ version of Bezrukavnikov's equivalence in \cite{twohecke}, which we expect will have applications to a mod $p$ local Langlands correspondence. In particular, we hope to give a geometric construction of some instances of Grosse-Kl\"{o}nne's functor \cite{GK2016} from supersingular mod $p$ Hecke modules to Galois representations. We refer the reader to the introduction in \cite{modpGr} for more information on the relation between the objects studied in this paper and the $p$-adic Langlands program.

\subsection{Main results} \label{mainsec}
Let $G$ be a connected reductive group over an algebraically closed field $k$ of characteristic $p > 2$ such that $G_\text{der}$ is almost simple. Fix a maximal torus and a Borel subgroup $T \subset B \subset G$. Let $I \subset L^+G$ be the Iwahori group given by the fiber of $B$ under the projection $L^+G \rightarrow G$ (see Section \ref{S2.1} for the definitions). Let $\Gr$ be the affine Grassmannian of $G$, let $\mathcal{F}\ell  $ be the Iwahori affine flag variety, and let $\pi \colon \mathcal{F}\ell \rightarrow \Gr$ be the projection. 

In \cite[\S 6]{modpGr} we defined the categories of equivariant perverse $\mathbb{F}_p$-sheaves $P_{L^+G}(\Gr, \mathbb{F}_p)$ and $P_I(\mathcal{F}\ell, \mathbb{F}_p)$ as well as a convolution product $*$ on $P_{L^+G}(\Gr, \mathbb{F}_p)$. The convolution product on $P_{L^+G}(\Gr, \mathbb{F}_p)$ preserves the full subcategory of semisimple objects $P_{L^+G}(\Gr, \mathbb{F}_p)^{\sss} \subset P_{L^+G}(\Gr, \mathbb{F}_p)$.  In Section \ref{S2.2} we define the convolution product $\mathcal{F}_1^\bullet * \mathcal{F}_2^{\bullet} \in D_c^b(\mathcal{F}\ell_{\text{\'{e}t}}, \mathbb{F}_p)$ of two $I$-equivariant perverse sheaves $\mathcal{F}_1^\bullet$, $\mathcal{F}_2^{\bullet} \in P_I(\mathcal{F}\ell, \mathbb{F}_p)$. 

Let $W$ be the Weyl group of $G$ and let $\tilde{W}$ be the Iwahori--Weyl group of $G(k (\!( t )\!))$. For $\mu \in X_*(T)^+$ let $\Gr_{\leq \mu} \subset \Gr$ be the corresponding reduced $L^+G$-orbit closure, and for $w \in \tilde{W}$ let $\mathcal{F}\ell_w \subset \mathcal{F}\ell$ be the corresponding reduced $I$-orbit closure. Let $\IC_\mu \in P_{L^+G}(\Gr, \mathbb{F}_p)^{\sss}$ be the shifted constant sheaf $\mathbb{F}_p[\dim \Gr_{\leq \mu}]$ supported on $\Gr_{\leq \mu}$ (see Theorem \ref{simpleobjects}). 

Given $\mu \in X_*(T)^+$ let $\text{Adm}(\mu) \subset \tilde{W}$ be the $\mu$-admissible set defined in (\ref{admdef}). Set $$\mathcal{A}(\mu) := \bigcup_{w \in \text{Adm}(\mu)} \mathcal{F}\ell_{w} \subset \mathcal{F} \ell.$$ Let $\mathcal{Z}_{\mathcal{A}(\mu)} \in D_c^b(\mathcal{F}\ell_{\text{\'{e}t}}, \mathbb{F}_p)$ be the shifted constant sheaf $\mathbb{F}_p[\dim \mathcal{A}(\mu)]$ supported on $\mathcal{A}(\mu)$. 

Our main theorem is as follows.

\begin{thm} \label{mainthm}
There exists an exact functor $\mathcal{Z} \colon P_{L^+G}(\Gr, \mathbb{F}_p)^{\sss} \rightarrow P_{I}(\mathcal{F}\ell, \mathbb{F}_p)$ that satisfies:
\leavevmode
\begin{enumerate}[{\normalfont (i)}]
\item For all $\mu \in X_*(T)^+$ there is a canonical isomorphism $$\mathcal{Z}(\IC_\mu) \cong \mathcal{Z}_{\mathcal{A}(\mu)}.$$
\item For all $\mathcal{F}^\bullet \in P_{L^+G}(\Gr, \mathbb{F}_p)^{\sss}$ there is a canonical isomorphism
$$R\pi_!(\mathcal{Z}(\mathcal{F}^\bullet)) \cong \mathcal{F}^\bullet.$$ 
\item For all $\mathcal{F}_1^\bullet \in P_{L^+G}(\Gr, \mathbb{F}_p)^{\sss}$ and $\mathcal{F}_2^{\bullet} \in P_{I}(\mathcal{F}\ell, \mathbb{F}_p)$ the convolution product $$\mathcal{Z}(\mathcal{F}_1^\bullet) * \mathcal{F}_2^{\bullet}$$ is perverse and $I$-equivariant. 
\item For all $\mathcal{F}_1^\bullet \in P_{L^+G}(\Gr, \mathbb{F}_p)^{\sss}$ and $\mathcal{F}_2^{\bullet} \in P_{I}(\mathcal{F}\ell, \mathbb{F}_p)$ there is a canonical isomorphism
$$\mathcal{Z}(\mathcal{F}_1^\bullet) * \mathcal{F}_2^{\bullet} \cong  \mathcal{F}_2^{\bullet} * \mathcal{Z}(\mathcal{F}_1^\bullet).$$
\item For all $\mathcal{F}_1^\bullet$, $\mathcal{F}_2^{\bullet} \in P_{L^+G}(\Gr, \mathbb{F}_p)^{\sss}$ there is a canonical isomorphism
$$\mathcal{Z}(\mathcal{F}_1^\bullet * \mathcal{F}_2^{\bullet}) \cong \mathcal{Z}(\mathcal{F}_1^\bullet) * \mathcal{Z}(\mathcal{F}_2^{\bullet}).$$
\end{enumerate}
\end{thm}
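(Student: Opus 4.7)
The plan is to imitate Gaitsgory's construction of central sheaves via a Beilinson--Drinfeld-type degeneration, defining $\mathcal{Z}$ as a nearby cycles functor on a family whose generic fiber involves $\Gr$ and whose special fiber is the admissible union inside $\mathcal{F}\ell$. The essential differences from the $\ell$-adic case are that the nearby cycles formalism is the mod $p$ one built earlier in this paper, and the weight-based arguments that usually guarantee perversity and constancy in the $\ell$-adic setting are replaced by Frobenius splitting and the strong $F$-regularity of local models announced in the abstract.

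First I would set up the deformation. Over $X = \mathbb{A}^1$ with marked point $0$, for each $\mu \in X_*(T)^+$ there is a flat family $\mathcal{M}(\mu) \to X$ whose fiber over $s \neq 0$ is the $\mu$-bounded convolution $\mathcal{F}\ell \widetilde{\times} \Gr_{\leq \mu}$ and whose special fiber is the closed subscheme $\mathcal{A}(\mu) \subset \mathcal{F}\ell$ cut out by the mod $p$ local model. I would define
\[
\mathcal{Z}(\mathcal{F}^\bullet) := R\Psi_0 \bigl( p^* \mathcal{F}^\bullet [1] \bigr),
\]
where $p \colon \mathcal{M}|_{X \setminus 0} \to \Gr$ is the projection onto the Grassmannian factor of the generic fiber and $R\Psi_0$ is the mod $p$ nearby cycles functor at $0$. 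Extending via ind-scheme colimits gives the functor $\mathcal{Z}$ on all of $P_{L^+G}(\Gr, \mathbb{F}_p)^{\sss}$.

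For property (i), the goal is to show $\mathcal{Z}(\IC_\mu) \cong \mathcal{Z}_{\mathcal{A}(\mu)}$. Along the smooth $\Gr_{\leq \mu}$-direction, $p^* \IC_\mu$ is the shifted constant sheaf on $\mathcal{M}(\mu)|_{X \setminus 0}$, so the computation reduces to $R\Psi_0$ of a shifted constant sheaf on an $F$-rational (in fact strongly $F$-regular) flat family. Pseudo-rationality then forces $R\Psi_0$ to again yield a shifted constant sheaf on $\mathcal{A}(\mu)$, in the expected perverse degree. Property (ii) follows from proper base change: $R\pi_!$ commutes with $R\Psi_0$, and on the generic fiber the pushforward along the first projection annihilates the $\mathcal{F}\ell$ factor and recovers $\mathcal{F}^\bullet$. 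Property (v) is obtained by the standard fusion argument on the two-point BD family over $X^2$, where nearby cycles at the diagonal computes both $\mathcal{Z}(\mathcal{F}_1^\bullet * \mathcal{F}_2^\bullet)$ and $\mathcal{Z}(\mathcal{F}_1^\bullet) * \mathcal{Z}(\mathcal{F}_2^\bullet)$.

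Properties (iii) and (iv) are the main obstacle. Once (iii) is in hand, property (iv) follows formally from the $\mathbb{Z}/2$ symmetry of the two-point BD family, as in Gaitsgory's argument. The difficulty is (iii): in the $\ell$-adic setting, $t$-exactness of nearby cycles (a consequence of Gabber's purity theorem) immediately gives perversity of $\mathcal{Z}(\mathcal{F}^\bullet) * \mathcal{F}_2^\bullet$, but this tool is not available for $\mathbb{F}_p$-coefficients. I plan to handle this by filtering $\mathcal{Z}(\mathcal{F}^\bullet)$ along the admissible stratification into shifted constant sheaves on the varieties $\mathcal{A}(\nu)$, and then reducing perversity of the convolution to a statement about $R q_!$ of a constant sheaf on a twisted product $\mathcal{A}(\nu) \widetilde{\times}_I \mathcal{F}\ell_w$, where $q$ is the convolution map. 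The Frobenius-splitting and $F$-rationality of such twisted products then substitute for the purity input, with Cohen--Macaulayness supplying the necessary cohomological vanishing. This is the step where the local model singularity results of the paper will bear the most weight.
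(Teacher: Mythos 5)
Your overall strategy does match the paper's: define $\mathcal{Z}$ as mod $p$ nearby cycles on a central degeneration of $\Gr$ into $\mathcal{F}\ell$, and substitute Frobenius-splitting/$F$-rationality/pseudo-rationality for the weight and $t$-exactness arguments of the $\ell$-adic setting. But several load-bearing steps are asserted through tools that are exactly the ones unavailable for $\mathbb{F}_p$-coefficients. First, the family you write down cannot exist as stated: a flat family cannot have generic fiber $\mathcal{F}\ell\,\widetilde{\times}\,\Gr_{\leq\mu}$ and special fiber $\mathcal{A}(\mu)$ (the dimensions of the fibers already disagree); the correct object is the global Schubert variety $\overline{\Gr}_{\mathcal{G},\mu}\subset\Gr_{\mathcal{G}}$, with generic fiber $\Gr_{\leq\mu}\times C^{\circ}$ and reduced special fiber $\mathcal{A}(\mu)$, and $\mathcal{Z}(\mathcal{F}^\bullet)=\Psi_{\Gr_{\mathcal{G}}}(\mathcal{F}^\bullet\overset{L}{\boxtimes}\mathbb{F}_p[1]|_{C^\circ})$. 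Second, and more seriously, your proof of (ii) is ``$R\pi_!$ commutes with $R\Psi_0$ by proper base change.'' For the nearby cycles functor actually available here, defined as $Ri^*j_{!*}(-)[-1]$, compatibility with proper pushforward is \emph{not} a formal consequence of proper base change (there is no base change for $j_{!*}$), and the paper explicitly flags that this commutation is unknown in general; establishing it for these particular sheaves is the whole content of (ii), proved via Kov\'acs' theorem $R\pi_{\mathcal{G},*}\mathcal{O}_{\overline{\Gr}_{\mathcal{G},\mu}}\cong\mathcal{O}_{\overline{\Gr}_{\underline{G},\mu}}$ (birational proper map of pseudo-rational schemes) together with the Artin--Schreier sequence, and only afterwards proper base change to restrict to $0$. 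Appealing instead to the SGA7 functor does not help, since its constructibility and functoriality are not established for $\mathbb{F}_p$-sheaves.

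Similar gaps occur in (iii) and (v). For (v), a ``standard fusion argument over $X^2$'' presupposes two-parameter/iterated nearby cycles, K\"unneth and ULA-type statements that have no mod $p$ counterpart here; the paper instead works over a single copy of $C$ with the convolution family $\Gr_{\mathcal{G}}^{\conv''}\cong\Gr_{\mathcal{G}}\,\widetilde{\times}\,\Gr_{\mathcal{G}}\to\Gr_{\mathcal{G}}$, shows $\overline{\Gr}^{\conv''}_{\mathcal{G},\mu_1,\mu_2}$ is Cohen--Macaulay, and uses that $m''_{\mathcal{G}}$ is \emph{birational and proper} so that Kov\'acs plus Artin--Schreier gives $Rm''_{\mathcal{G},*}$ of the constant sheaf again constant, combined with $\IC_{\mu_1}*\IC_{\mu_2}\cong\IC_{\mu_1+\mu_2}$. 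For (iii), your reduction to $Rq_!$ of a constant sheaf on the special-fiber twisted product $\mathcal{A}(\nu)\,\widetilde{\times}\,\mathcal{F}\ell_w$ misses the actual mechanism: that map is proper but not birational, so no Kov\'acs-type statement applies on the special fiber, and Cohen--Macaulayness of the source does not by itself give any cohomological vanishing for a non-birational pushforward. The paper's route is to identify $\mathcal{Z}(\mathcal{F}_1^\bullet)*\mathcal{F}_2^\bullet$ with $\Psi_{\Gr^{\BD}_{\mathcal{G}}}(\mathcal{F}_1^\bullet\overset{L}{\boxtimes}\mathbb{F}_p[1]|_{C^\circ}\overset{L}{\boxtimes}\mathcal{F}_2^\bullet)$ via the morphism $m_{\mathcal{G}}\colon\overline{\Gr}^{\conv}_{\mathcal{G},\mu,w}\to\overline{\Gr}^{\BD}_{\mathcal{G},\mu,w}$, which \emph{is} birational because it is an isomorphism over $C^\circ$; this requires new geometric input you do not mention, namely Frobenius splittings of $\overline{\Gr}^{\BD}_{\mathcal{G},\mu,w}$ compatible with the special fiber for arbitrary $w$ (extending Zhu's splittings) and the resulting strong $F$-regularity, reducedness, Cohen--Macaulayness and equidimensionality of $(\overline{\Gr}^{\BD}_{\mathcal{G},\mu,w})_0$, from which perversity follows. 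Your treatment of (iv) via the two convolution resolutions of the BD family is consistent with the paper, and your use of semisimplicity for exactness is implicit but fine; the genuine gaps are the commutation with proper pushforward in (ii), the fusion step in (v), and the missing globalize-to-birational mechanism in (iii).
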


Our method is analogous to the case of $\overline{\mathbb{Q}}_\ell$-coefficients considered in \cite{GaitsgoryCentral} and \cite{ZhuCoherence}, but it involves some new ideas because we use a different notion of the nearby cycles functor which is well suited for our purposes (see Remark \ref{classicnearby} for the usual definition). As in \cite{modpGr}, we exploit subtle properties of the singularities of affine Schubert varieties. In particular, we use Frobenius splitting techniques to verify that our ad-hoc construction of the nearby cycles functor satisfies the necessary properties. This requires us to prove some new results on the $F$-singularities of affine Schubert varieties, which will be explained in Section \ref{Fsec}.

The relevant facts from the theory of $F$-singularities are that if $X$ is an integral $F$-rational variety of dimension $d$ then the shifted constant sheaf $\mathbb{F}_p[d] \in D_c^b(X_{\text{\'{e}t}}, \mathbb{F}_p)$ is a simple perverse sheaf by \cite[1.7]{modpGr}, and that $F$-rational singularities are pseudo-rational by a result of Smith \cite{SmithFrat}. We will combine these facts with a result of Kov{\'a}cs \cite{ratsing} to deduce that our nearby cycles functor commutes with pushforward along birational morphisms between certain $F$-rational varieties.

\begin{rmrk}
The functor $\mathcal{Z}$ in Theorem 1.1 can be defined on the category $P_{L^+G}(\Gr, \mathbb{F}_p)$, but then it is not clear that this functor is exact. This is the reason we restrict to the subcategory $P_{L^+G}(\Gr, \mathbb{F}_p)^{\sss}$. 
\end{rmrk}

\subsection{Applications to mod $p$ Hecke algebras} \label{Heckesec}
Let $G$ be a split connected reductive group defined over a local field $E$ of characteristic $p > 2$ and residue field $\mathbb{F}_q$. Fix a maximal torus and a Borel subgroup $T \subset B \subset G$. We assume that $T$ and $B$ are defined over $\mathbb{F}_q$ and that $G_{\text{der}}$ is absolutely almost simple. Let $K = G(\mathcal{O}_E)$ and let $t$ be a uniformizer of $E$. Then $\mathcal{H}_K$ has a natural basis $\{\mathds{1}_\mu\}$ indexed by the dominant cocharacters $X_*(T)^+$  where $\mathds{1}_\mu$ is the characteristic function of the double coset $K \mu(t) K$. Similarly if $I \subset K$ is the Iwahori subgroup determined by $B$ then $\mathcal{H}_I$ has a basis $\{\mathds{1}_w\}$ indexed by $\tilde{W}$. Let $\mathds{1}_K \in \mathcal{H}_I$ be the function which is $1$ on $K$ and $0$ elsewhere. In Section \ref{S4} we will show that a version of Theorem 1.1 also holds when we view $\Gr$ and $\mathcal{F}\ell$ as ind-schemes over the finite field $\mathbb{F}_q$. Then by applying the function-sheaf correspondence we will derive the following explicit formula for $\mathcal{C}^{-1} \colon \mathcal{H}_K \rightarrow Z(\mathcal{H}_I)$.

\begin{thm} \label{mainapp} Let $\mathcal{C}$ be the isomorphism $Z(\mathcal{H}_I) \rightarrow \mathcal{H}_K$ such that $\mathcal{C}(f) = f * \mathds{1}_K$. Then
$$\mathcal{C}^{-1} \left( \sum_{\lambda \leq \mu} \mathds{1}_{\lambda} \right) = \sum_{w \in \textnormal{Adm}(\mu)} \mathds{1}_{w}.$$
\end{thm}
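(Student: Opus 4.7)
The strategy is to reduce Theorem \ref{mainapp} to Theorem \ref{mainthm} via the mod $p$ function-sheaf correspondence, using the $\mathbb{F}_q$-rational refinement of Theorem \ref{mainthm} promised in Section \ref{S4}.

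First I would set up the dictionary. For an ind-scheme $X$ over $\mathbb{F}_q$ and a Frobenius-equivariant complex $\mathcal{F}^\bullet \in D_c^b(X_{\text{\'{e}t}}, \mathbb{F}_p)$, one forms the trace function
\[
f_{\mathcal{F}^\bullet}(x) \;=\; \sum_i (-1)^i \Tr\bigl(\text{Frob}_q \mid H^i(\mathcal{F}^\bullet_x)\bigr), \qquad x \in X(\mathbb{F}_q).
\]
Because geometric Frobenius acts trivially on every $\mathbb{F}_p$-constant sheaf, for a reduced closed $\mathbb{F}_q$-subscheme $Y \subset X$ of equidimension $d$ the shifted constant sheaf $\mathbb{F}_p[d]$ on $Y$ gives the function $(-1)^d \mathds{1}_{Y(\mathbb{F}_q)}$. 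Standard verifications show that this assignment is compatible with proper pushforward and with the convolution products for $L^+G$- and $I$-equivariant complexes on $\Gr$ and $\mathcal{F}\ell$.

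Next I would compute the two relevant functions. By Theorem \ref{mainthm}(i), $\mathcal{Z}(\IC_\mu) \cong \mathcal{Z}_{\mathcal{A}(\mu)}$. Using the stratifications $\Gr_{\leq \mu}(\mathbb{F}_q) = \bigsqcup_{\lambda \leq \mu} \Gr_\lambda(\mathbb{F}_q)$ and $\mathcal{A}(\mu)(\mathbb{F}_q) = \bigsqcup_{w \in \textnormal{Adm}(\mu)} (IwI/I)(\mathbb{F}_q)$ (the latter because $\textnormal{Adm}(\mu)$ is closed under the Bruhat order), together with the equality $\dim \mathcal{A}(\mu) = \dim \Gr_{\leq \mu} = \langle 2\rho, \mu \rangle =: d_\mu$, the dictionary yields
\[
f_{\IC_\mu} = (-1)^{d_\mu} \sum_{\lambda \leq \mu} \mathds{1}_{\lambda}, \qquad f_{\mathcal{Z}(\IC_\mu)} = (-1)^{d_\mu} \sum_{w \in \textnormal{Adm}(\mu)} \mathds{1}_{w}.
\]

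Finally I would apply parts (ii) and (iv) of Theorem \ref{mainthm}. Since $\pi$ is induced by $G(E)/I \to G(E)/K$, the functor $R\pi_!$ corresponds under the dictionary to summation along the finite fibers $K/I$, which is precisely right convolution by $\mathds{1}_K$; hence (ii) gives $\mathcal{C}(f_{\mathcal{Z}(\IC_\mu)}) = f_{\mathcal{Z}(\IC_\mu)} * \mathds{1}_K = f_{\IC_\mu}$. Property (iv) translates to commutation of $f_{\mathcal{Z}(\IC_\mu)}$ with every basis element $\mathds{1}_{w}$, placing $f_{\mathcal{Z}(\IC_\mu)}$ in $Z(\mathcal{H}_I)$. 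Cancelling the common sign $(-1)^{d_\mu}$ on both sides yields the claimed formula for $\mathcal{C}^{-1}$. The bulk of the work is absorbed into Theorem \ref{mainthm} and its $\mathbb{F}_q$-rational version in Section \ref{S4}; the principal remaining hurdle is to verify rigorously that the mod $p$ function-sheaf dictionary is compatible with convolution and proper pushforward on the ind-schemes $\Gr$ and $\mathcal{F}\ell$. This is more delicate than in the $\overline{\mathbb{Q}}_\ell$-setting but is tractable precisely because Frobenius acts trivially on constant $\mathbb{F}_p$-sheaves, reducing all trace computations to counting $\mathbb{F}_q$-points of Schubert cells.
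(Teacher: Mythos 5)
Your proposal is correct and follows essentially the same route as the paper: pass to the $\mathbb{F}_q$-rational version of Theorem \ref{mainthm} and apply the mod $p$ function-sheaf dictionary, with the compatibility of trace functions with convolution and with $R\pi_!$ (the paper's Lemmas \ref{applem1} and \ref{applem2}, proved via Theorem \ref{fslemma}) doing the work, and centrality coming from part (iv). The only cosmetic difference is that you cancel the common sign $(-1)^{d_\mu}$ by hand, whereas the paper absorbs it into the normalized complexes $\IC_\mu^{\N}$ and $\mathcal{Z}_{\mathcal{A}(\mu)}^{\N}$ twisted by the local system $\mathcal{L}$.
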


For $\mu \in X_*(T)^+$ let $t_\mu$ be the element $\mu$ regarded as an element of $\tilde{W}$, and let $\Lambda_\mu \subset X_*(T)$ be the $W$-orbit of $\mu$. In \cite{Vigneras2005}, Vign\'{e}ras  constructed integral Bernstein elements $B(\lambda) \in \mathcal{H}_I$ for $\lambda \in X_*(T)$ and showed that $\{\sum_{\lambda \in \Lambda_\mu} B(\lambda)\}_{\mu \in X_*(T)^+}$ is an $\mathbb{F}_p$-basis for $Z(\mathcal{H}_I)$. Ollivier \cite{Ollivier2014} showed that these Bernstein elements give rise to an isomorphism of $\mathbb{F}_p$-algebras $$\mathcal{B} \colon \mathbb{F}_p[X_*(T)^+] \rightarrow Z(\mathcal{H}_I), \quad \quad \quad \quad \mu \mapsto \sum_{\lambda \in \Lambda_\mu} B(\lambda).$$  Ollivier also showed that $\mathcal{B}$ is compatible with the mod $p$ Satake isomorphism in the sense that $\mathcal{B} = \mathcal{C}^{-1} \circ \mathcal{S}^{-1}$. 

For our last application, we note that by \cite[2.3]{Ollivier2014} the coefficient of $\mathds{1}_w$ appearing in $\sum_{\lambda \in \Lambda_\mu} B(\lambda) \in \mathcal{H}_I$ is $0$ if $w \notin \text{Adm}(\mu)$ and it is $1$ if $w \in \text{Adm}(\mu)$ and $\ell(w) = \ell(t_\mu)$, where $\ell$ is the length function on $\tilde{W}$. Using Theorem \ref{mainapp} we can compute the rest of the coefficients (\emph{cf.} \cite[5.2]{Ollivier2015}).

\begin{cor} \label{maincor}
Let $\mu \in X_*(T)^+$. Then the integral Bernstein elements satisfy $$\sum_{\lambda \in \Lambda_\mu} B(\lambda) = \sum_{w \in \textnormal{Adm}(\mu)}  \mathds{1}_{w}.$$
\end{cor}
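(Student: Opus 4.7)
The strategy is to chain together Theorem \ref{mainapp}, Ollivier's compatibility $\mathcal{B} = \mathcal{C}^{-1} \circ \mathcal{S}^{-1}$, and an explicit formula for the inverse mod $p$ Satake map. By construction $\mathcal{B}(\mu) = \sum_{\lambda \in \Lambda_\mu} B(\lambda)$, so Ollivier's result immediately recasts the left-hand side of the corollary as $\mathcal{C}^{-1}(\mathcal{S}^{-1}(\mu))$, and the task reduces to computing this element explicitly.

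The first step is to identify $\mathcal{S}^{-1}(\mu) \in \mathcal{H}_K$ with $\sum_{\lambda \leq \mu} \mathds{1}_\lambda$. I would deduce this from the geometric realization of the mod $p$ Satake isomorphism established in \cite{modpGr}: the element $\mu \in \mathbb{F}_p[X_*(T)^+]$ is matched, under geometric Satake in characteristic $p$, with the simple perverse sheaf $\IC_\mu$, which in the mod $p$ setting is simply the shifted constant sheaf on $\Gr_{\leq \mu}$. Applying the function-sheaf correspondence turns $\IC_\mu$ into (a sign times) the characteristic function of $\Gr_{\leq \mu}(\mathbb{F}_q)$, and the stratification $\Gr_{\leq \mu}(\mathbb{F}_q) = \bigsqcup_{\lambda \leq \mu} \Gr_\lambda(\mathbb{F}_q)$ combined with the bijection $\Gr_\lambda(\mathbb{F}_q) \leftrightarrow K\lambda(t)K/K$ rewrites this function as $\sum_{\lambda \leq \mu} \mathds{1}_\lambda$.

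Applying Theorem \ref{mainapp} now finishes the proof:
$$\sum_{\lambda \in \Lambda_\mu} B(\lambda) \;=\; \mathcal{B}(\mu) \;=\; \mathcal{C}^{-1}\bigl( \mathcal{S}^{-1}(\mu) \bigr) \;=\; \mathcal{C}^{-1}\!\Bigl( \sum_{\lambda \leq \mu} \mathds{1}_{\lambda} \Bigr) \;=\; \sum_{w \in \textnormal{Adm}(\mu)} \mathds{1}_{w}.$$

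The main content of the corollary is therefore entirely absorbed into Theorem \ref{mainapp}, whose proof carries the geometric work via the central functor $\mathcal{Z}$ of Theorem \ref{mainthm} and its compatibility with $R\pi_!$. Once that theorem is granted, the chain of identifications above is routine. The only delicate point is matching normalization conventions so that the sheaf-function correspondence applied to $\IC_\mu$ produces $+\sum_{\lambda \leq \mu}\mathds{1}_\lambda$ on the nose, but this is a bookkeeping issue in the definitions of $\mathcal{S}$ and the trace map, not a genuine obstacle.
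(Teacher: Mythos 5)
Your proposal is correct and follows essentially the same route as the paper: the paper also deduces the corollary from Theorem \ref{mainapp}, Ollivier's identity $\mathcal{B} = \mathcal{C}^{-1}\circ\mathcal{S}^{-1}$, and the identification $\mathcal{S}^{-1}(\mu) = \sum_{\lambda\le\mu}\mathds{1}_\lambda$, which it packages as Proposition \ref{applem3} (proved via Herzig's explicit formula and the fact that $\IC_\mu$ is a constant sheaf, with the normalized complexes $\IC_\mu^{\N}$ handling exactly the sign bookkeeping you flag at the end).
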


\begin{rmrk}
Let $\tilde{I} \subset I$ be the pro-$p$ Sylow subgroup. The integral Bernstein elements are usually defined in the larger Hecke algebra $\mathcal{H}_{\tilde{I}}$. There is a central idempotent $\epsilon_1 \in \mathcal{H}_{\tilde{I}}$ such that $\mathcal{H}_I = \epsilon_1\mathcal{H}_{\tilde{I}}$ (see \cite[2.14]{Ollivier2014}). The Bernstein elements  we are considering in this paper are the images of the Bernstein elements in \cite{Ollivier2014} after multiplication by $\epsilon_1$. The integral Bernstein elements in \cite{Ollivier2014} also depend on a choice of a sign ($\pm$) and a Weyl chamber, but the central integral Bernstein elements $\sum_{\lambda \in \Lambda_\mu} B(\lambda)$ do not depend on these choices by \cite[3.4]{Ollivier2014}.
\end{rmrk}

\subsection{F-singularities of local models} \label{Fsec}
During the course of proving Theorem \ref{mainthm} we will also prove a result about the singularities of equal characteristic analogues of local models of Shimura varieties. Following the notation in Section \ref{Heckesec}, let $\Gr$ be the affine Grassmannian of $G$ viewed as an ind-scheme over $\mathbb{F}_q$. Then for $\mu \in X_*(T)^+$ there is an associated local model $M_{\mu} \rightarrow \Spec(\mathcal{O}_E)$ such that the generic fiber of $M_{\mu}$ is isomorphic to $\Gr_{\leq \mu} \times \Spec(E)$ and the reduced special fiber is isomorphic to $\mathcal{A}(\mu)$ (see Definition \ref{locdef}).

\begin{thm} \label{thm2}
Suppose that $p > 2$ and that $G_{\text{\normalfont der}}$ is absolutely almost simple and simply connected.  Then for any $\mu \in X_*(T)^+$, every local ring in $M_{\mu}$ is strongly $F$-regular, $F$-rational, and has pseudo-rational singularities.  
\end{thm}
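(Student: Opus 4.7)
The plan is to realize $M_\mu$ inside a global Beilinson--Drinfeld style affine Grassmannian over $\Spec(\mathcal{O}_E)$ and then to transfer Frobenius splitting results from affine Schubert geometry to $M_\mu$. By construction (Definition \ref{locdef}) $M_\mu$ is flat over $\Spec(\mathcal{O}_E)$ with generic fiber $\Gr_{\leq \mu} \times \Spec(E)$ and reduced special fiber $\mathcal{A}(\mu)$; both of these are closures of affine Schubert cells and hence fit into Mathieu's framework of Frobenius split projective ind-schemes.

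First I would record that $M_\mu$ is normal and Cohen--Macaulay, which I expect to follow from the corresponding properties of $\mathcal{A}(\mu)$ (standard consequences of compatible Frobenius splittings of affine Schubert varieties in $\mathcal{F}\ell$) together with flatness of $M_\mu$ over $\Spec(\mathcal{O}_E)$ and a transfer argument from the special fiber.

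The heart of the proof is to construct a Frobenius splitting of $M_\mu$ that is compatible with an effective Cartier divisor $D \subset M_\mu$ such that $D$ is the zero locus of a section of an ample line bundle and $M_\mu \setminus D$ is regular. This would be achieved by exhibiting $M_\mu$ as the image of a Bott--Samelson-type resolution in the global Grassmannian, computing the induced $(p^{-1})$-linear map by a residue calculation along the exceptional divisors, and checking that the resulting splitting accommodates a maximal ample boundary. The hypotheses $p > 2$ and $G_\text{der}$ simply connected enter at exactly this step: they ensure Kempf-type vanishing for line bundles on affine Schubert varieties and supply a clean description of $\mathrm{Pic}(M_\mu)$ in which the desired ample class can be identified.

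Granted such a compatible splitting, a theorem of K.~E.~Smith implies that every local ring of $M_\mu$ is strongly $F$-regular. The remaining assertions then follow from standard implications: strong $F$-regular implies $F$-rational (Hochster--Huneke), and $F$-rational implies pseudo-rational by \cite{SmithFrat}. The main obstacle is the third step, constructing the compatibly split ample divisor with regular complement: because $M_\mu$ is singular along the entire special fiber $\mathcal{A}(\mu)$, the divisor $D$ cannot be contained in a single fiber but must arise from the global geometry of the ambient Grassmannian over $\Spec(\mathcal{O}_E)$. Overcoming this should require exploiting the mixed behavior of the global affine Grassmannian together with the full strength of Mathieu's splitting theorem for affine Schubert varieties.
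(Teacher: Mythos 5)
There is a genuine gap at the heart of your plan, namely step three. You ask for a Frobenius splitting of $M_\mu$ compatible with an effective Cartier divisor $D$ that is ample and has \emph{regular} complement, and you then invoke a splitting-along-a-divisor criterion to conclude strong $F$-regularity. But the natural divisor here, the special fiber, has complement the generic fiber $\Gr_{\leq \mu} \times \Spec(E)$, which is a singular affine Schubert variety for non-minuscule $\mu$; so no splitting along the special fiber can have regular complement, and a hypothetical $D$ with regular complement would have to contain horizontal components absorbing the singular locus of the generic fiber --- you give no construction of such a $D$, of a splitting along it, or of its ampleness, and this is exactly where the difficulty lies. More fundamentally, Mathieu-type Frobenius splittings and Bott--Samelson residue computations by themselves can never yield strong $F$-regularity (split but non-$F$-regular varieties abound); the criterion \cite[3.3 (a)]{TightClosure} needs the complement of the divisor to be strongly $F$-regular, and the essential input making this work in the paper is the global $F$-regularity of the affine Schubert varieties $\Gr_{\leq \mu}$ established in \cite[1.4]{modpGr}. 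Granting that, the paper only needs Zhu's splitting of the global Schubert variety $\overline{\Gr}_{\mathcal{G},\mu}$ over $C=\mathbb{A}^1_k$ compatible with its special fiber \cite[6.5]{ZhuCoherence} to prove Theorem \ref{BDprops}; the divisor used is the (non-ample, vertical) special fiber, and the hypotheses $p>2$, $G_{\text{der}}$ absolutely almost simple and simply connected are consumed by Zhu's coherence results and by \cite[1.4]{modpGr}, not by Kempf vanishing or a description of the Picard group of $M_\mu$.

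A second, smaller omission: you never explain how splitting statements, which live on global models over a curve over a perfect field, reach the local rings of $M_\mu$ over $\mathcal{O}_E$, whose residue fields are imperfect. The paper handles a point $x$ of the closed fiber by completing and transferring strong $F$-regularity along $\mathcal{O}_y \to \hat{\mathcal{O}}_y$ for $y \in \overline{\Gr}_{\mathcal{G},\mu}$, using excellence, $F$-finiteness via \cite[2.6]{Kunz1976}, and \cite[3.6]{Aberbach2001}; and points of the generic fiber require a separate base-change argument, namely geometric integrality of $\Gr_{\mathbb{F}_q, \leq \mu}$ together with Lemma \ref{prodlem2} applied to the $F$-finite field $E$. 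Neither of these steps appears in your outline, and the second cannot be bypassed once the ``regular complement'' divisor of your step three is unavailable.
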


In the mixed characteristic case, local models are used to study the \'{e}tale local structure of integral models of Shimura varieties with parahoric level structure. In the equal characteristic case they are related to moduli spaces of shtukas. We refer the reader to \cite{RicharzLocalModels} for more information on local models, where it is also shown that certain local models are Cohen--Macaulay by using Frobenius splittings of global affine Schubert varieties constructed in \cite{ZhuCoherence}.

We prove Theorem \ref{thm2} by combining the same Frobenius splittings in \cite{ZhuCoherence} with our previous results on the global $F$-regularity of affine Schubert varieties in \cite{modpGr}. In fact, we prove that certain global affine Schubert varieties are strongly $F$-regular (Theorem \ref{BDprops}) and then we deduce the local statement in Theorem \ref{thm2}. We also show that Schubert subvarieties in related Beilinson--Drinfeld and convolution Grassmannians are strongly $F$-regular in Theorem \ref{BDF}.
\\
\\
\textbf{Acknowledgments.} It is a pleasure to thank Mark Kisin, C\'{e}dric P\'{e}pin, and Tobias Schmidt for several discussions and comments on an earlier version of this paper. I thank the referee for their careful reading and helpful suggestions which improved the exposition and simplified some of the proofs. I also thank Karol Koziol, Rachel Ollivier, Marie-France Vign\'{e}ras, David Yang, and Xinwen Zhu for their interest and helpful conversations. Parts of this paper were written while the author visited the University of Paris 13 and the University of Rennes 1, and he would like to thank these institutions for their hospitality. This material is based upon work supported by the National Science Foundation Graduate Research Fellowship Program under Grant No. DGE-1144152.

\section{Construction of the functor $\mathcal{Z}$} \label{S2}

\subsection{Local affine Schubert varieties} \label{S2.1}
Let $k$ be a perfect field of characteristic $p > 0$. For a smooth affine group scheme $G$ over the power series ring $k[\![t]\!]$ we define the loop group $LG$ as the functor on $k$-algebras
$$LG \colon R \mapsto G(R (\!( t )\!)).$$ The positive loop group $L^+G$ is the functor
$$L^+G \colon R \mapsto G(R[\![t]\!]).$$
For each integer $n>0$ we also have the $n$th jet group
$$L^nG \colon R \mapsto G(R[t]/t^n).$$

We now specialize to the case where $G$ is a split connected reductive group defined over $k$ (note that $G$ can also be viewed as a constant group scheme over $k[\![t]\!]$). Let $T \subset B \subset G$ be a maximal torus and a Borel subgroup. Let $I \subset L^+G$ be the Iwahori group given by the fiber of $B$ under the projection $L^+G \rightarrow G$. 
The affine Grassmannian is the fpqc-quotient
$\Gr : = LG/L^+G$ and the affine flag variety is the fpqc-quotient $\mathcal{F}\ell: = LG/I.$ Both $\Gr$ and $\mathcal{F}\ell$ are represented by ind-projective $k$-schemes.

The left $L^+G$-orbits in $\Gr$ are indexed by the set of dominant cocharacters $X_*(T)^+$ and the left $I$-orbits in $\mathcal{F}\ell$ are indexed by the Iwahori--Weyl group $\tilde{W}$ of $G(k (\!( t )\!))$. Given $\mu \in X_*(T)^+$ let $\Gr_\mu = L^+G \cdot \mu(t)$ be the corresponding reduced orbit. The reduced closure of $\Gr_\mu$ is denoted $\Gr_{\leq \mu}$, and it is the union of those $\Gr_\lambda$ for $\lambda \leq \mu$. 
For $w \in \tilde{W}$ we define $C(w)$ to be the corresponding reduced $I$-orbit and we denote its reduced closure by $\mathcal{F}\ell_{w}$. The scheme $C(w)$ is isomorphic to $\mathbb{A}_k^{\ell(w)}$. If $\lambda \in X_*(T)$ we denote by $t_\lambda$ the element $\lambda$ viewed as an element of $\tilde{W}$. Note that $\mu - \lambda$ is a sum of positive coroots with non-negative integer coefficients if and only if $t_\lambda \leq t_\mu$ in the Bruhat order on $\tilde{W}$ by \cite[9.4]{ZhuCoherence}, so there is no ambiguity in the choice of order on $X_*(T)^+$. See \cite{ZhuGra} or \cite[\S 5.1]{modpGr} for more details on these affine Schubert varieties. Note that we used the notation $S_w$ in \cite[\S 5.1]{modpGr} instead of $\mathcal{F}\ell_{w}$. 

We now give the definition of the $\mu$-admissible set appearing in Theorem \ref{mainthm}. Given $\mu \in X_*(T)$ let $\Lambda_\mu = W \cdot \mu \subset X_*(T)$ be the orbit of $\mu$ in $X_*(T)$ under the action of the Weyl group $W$. The $\mu$-admissible set is
\begin{equation} \label{admdef} \text{Adm}(\mu) : = \{ w \in \tilde{W} \: | \: w \leq t_{\lambda} \text{ for some } \lambda \in \Lambda_\mu\}.
\end{equation}

By \cite{FaltingsLoop} and \cite{PappasRapoport}, affine Schubert varieties are normal, Cohen--Macaulay, Frobenius split and have rational singularities if $p \nmid |\pi_1(G_{\text{der}})|$. Additionally, we have the following theorem.

\begin{thm}[{\cite[1.4]{modpGr}}]\label{globFreg}
If $p \nmid |\pi_1(G_{\textnormal{der}})|$ the affine Schubert varieties $\Gr_{\leq \mu}$ and $\mathcal{F}\ell_{w}$ are globally $F$-regular, strongly $F$-regular, and $F$-rational. 
\end{thm}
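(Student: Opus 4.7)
The plan is to establish global $F$-regularity of the affine Schubert varieties, whence strong $F$-regularity follows by Schwede--Smith and $F$-rationality by Smith. The natural tool is a Bott--Samelson resolution. Given a reduced expression $\underline{w} = (s_{i_1}, \ldots, s_{i_\ell})$ for $w \in \tilde{W}$, form the corresponding Bott--Samelson variety $Z_{\underline{w}}$, which is an iterated $\mathbb{P}^1$-bundle. Since $\mathbb{P}^1$ is globally $F$-regular and global $F$-regularity is inherited by projective bundles over a globally $F$-regular base, $Z_{\underline{w}}$ is globally $F$-regular. The Demazure map $m_{\underline{w}} \colon Z_{\underline{w}} \to \mathcal{F}\ell_w$ is proper birational, and by the Frobenius splitting constructions of Faltings and Pappas--Rapoport (which require $p \nmid |\pi_1(G_{\textnormal{der}})|$) one has $R(m_{\underline{w}})_* \mathcal{O}_{Z_{\underline{w}}} = \mathcal{O}_{\mathcal{F}\ell_w}$.

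The descent step is then as follows. Given an effective Weil divisor $D$ on $\mathcal{F}\ell_w$, pull it back to $Z_{\underline{w}}$, use global $F$-regularity upstairs to obtain a splitting $\mathcal{O}_{Z_{\underline{w}}} \to F^e_* \mathcal{O}_{Z_{\underline{w}}}(m_{\underline{w}}^* D)$ for some $e$, and push it down via the projection formula combined with the vanishing of higher direct images; this yields the required splitting of $\mathcal{O}_{\mathcal{F}\ell_w} \to F^e_* \mathcal{O}_{\mathcal{F}\ell_w}(D)$. For $\Gr_{\leq \mu}$, proceed analogously via a Bott--Samelson resolution factoring through $\mathcal{F}\ell_w$ for suitable $w$ and the projection $\pi \colon \mathcal{F}\ell \to \Gr$; the composite is proper surjective with the required cohomological vanishing (fibers being partial flag varieties of $G$), and the same descent argument transfers global $F$-regularity.

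The main obstacle is ensuring the compatibility between the canonical Frobenius splittings of $Z_{\underline{w}}$ (or $\mathcal{F}\ell_w$) and arbitrary effective divisors on the target, or equivalently producing splittings that are compatible with a boundary divisor which is ample on the Schubert variety. This is precisely where the hypothesis $p \nmid |\pi_1(G_{\textnormal{der}})|$ enters, via the construction of canonical splittings compatible with all Schubert subvarieties as in Faltings and Pappas--Rapoport. Since everything happens inside finite-dimensional Schubert subvarieties, the ind-scheme structure of $\mathcal{F}\ell$ causes no difficulty.
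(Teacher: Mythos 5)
Note first that the paper you are working from does not prove Theorem \ref{globFreg} at all: it is quoted from \cite[1.4]{modpGr}, so the relevant comparison is with the proof given there (which follows the strategy of Lauritzen--Raben-Pedersen--Thomsen for classical Schubert varieties). The genuine gap in your proposal is the very first input: you assert that the Bott--Samelson variety $Z_{\underline{w}}$ is globally $F$-regular because ``global $F$-regularity is inherited by projective bundles over a globally $F$-regular base.'' There is no such theorem in the literature, you give no argument for it, and it is unlikely to be true in this generality: by Schwede--Smith, a globally $F$-regular projective variety is of log Fano type, a strong positivity condition which projectivizations $\mathbb{P}(\mathcal{E})$ of arbitrary vector bundles over a globally $F$-regular base have no reason to satisfy (they need not even be Mori dream spaces). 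Even the special case you actually need -- global $F$-regularity of Bott--Samelson/Demazure varieties -- is not known (these varieties are only rarely weak Fano), and the known proofs are structured precisely to avoid it. Your descent step is fine in principle: global $F$-regularity does pass to the image under a proper surjective morphism $f$ with $f_*\mathcal{O}=\mathcal{O}$ onto a normal projective variety, after first enlarging a given effective Weil divisor to an effective Cartier divisor in a very ample class so that the projection formula applies (no vanishing of higher direct images is needed for this). But with the unproved claim about $Z_{\underline{w}}$ removed, there is nothing to descend from, so the argument collapses at its source.

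For comparison, the proof in \cite{modpGr} uses the Demazure resolution only to produce a (stable) Frobenius splitting of $\mathcal{F}\ell_w$ along its boundary divisor $\partial\mathcal{F}\ell_w = \mathcal{F}\ell_w - C(w)$ -- this is where the results of Faltings and Pappas--Rapoport \cite{FaltingsLoop, PappasRapoport}, and hence the hypothesis $p \nmid |\pi_1(G_{\text{der}})|$, enter -- and then invokes the criterion of Smith and Schwede--Smith: a normal projective variety that is stably Frobenius split along an effective divisor $D$ whose complement $X - \Supp(D)$ is globally $F$-regular is itself globally $F$-regular. Here the complement is the open cell $C(w) \cong \mathbb{A}^{\ell(w)}$, which is trivially globally $F$-regular; no ampleness of the boundary and no global $F$-regularity of the resolution are required, so your ``main obstacle'' paragraph is aimed at the wrong difficulty. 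For $\Gr_{\leq \mu}$ the open orbit $\Gr_\mu$ is not affine, so one does not argue directly; instead one descends along the $G/B$-fibration $\pi \colon \mathcal{F}\ell_{t_\mu^{w_0}} = \pi^{-1}(\Gr_{\leq \mu}) \rightarrow \Gr_{\leq \mu}$, which satisfies $\pi_*\mathcal{O} = \mathcal{O}$, using the descent lemma just described. If you replace your first step by this splitting-along-the-boundary argument, the remainder of your outline (descent, then strong $F$-regularity by \cite[3.10]{SmithGlobally} and $F$-rationality by Hochster--Huneke) goes through.
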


We refer the reader to \cite{SmithGlobally} for the definition of global $F$-regularity and to \cite{Frationalloc} for the definitions of strong $F$-regularity and $F$-rationality. Global $F$-regularity is a property of projective $k$-schemes. Strong $F$-regularity is defined for noetherian rings $R$ of characteristic $p > 0$ that are $F$-finite (meaning $F_*R$ is a finite $R$-module). If $R$ is reduced then $R$ is $F$-finite if and only if $R^{1/p}$ is a finite $R$-module. A finitely generated $k$-algebra is $F$-finite since $k$ is perfect. 

By \cite[3.1 (a)]{TightClosure}, $R$ is strongly $F$-regular if and only if $R_P$ is strongly $F$-regular for every prime ideal $P$, so it makes sense to say a locally noetherian scheme is strongly $F$-regular if all of its local rings are strongly $F$-regular. The property of $F$-rationality is defined for noetherian rings of characteristic $p > 0$, and we say that a locally noetherian scheme is $F$-rational if all of its local rings are $F$-rational. If $R$ is a homomorphic image of a Cohen--Macaulay ring, then $R$ is $F$-rational if and only if all of its local rings are $F$-rational by \cite[4.2 (e)]{Frationalloc}. We have the following chain of implications for projective $k$-schemes (or more generally projective schemes over an $F$-finite field):
$$
\begin{array}{lr} \text{Globally } \\ F \text{-regular} \\ \end{array} \overset{\text{\cite[3.10]{SmithGlobally}}}{\Longrightarrow} \begin{array}{lr} \text{Strongly } \\ F\text{-regular} \\ \end{array} \overset{\text{\cite[3.1]{TightClosure}}}{\Longrightarrow} F\text{-rational} \overset{\begin{array}{lr} \text{\scriptsize \cite[3.1]{SmithFrat}} \\ \text{\scriptsize \cite[4.2]{Frationalloc}} \\ \end{array}}{\Longrightarrow} \begin{array}{lr}
       \text{Pseudo-rational}\\
       \text{singularities, normal,}\\
      \text{Cohen--Macaulay.}\\
     \end{array} 
$$

\begin{rmrk}
We will also use the notion of pseudo-rationality as defined in \cite{ratsing}. Using \cite[1.13]{SmithFrat} and the flat base change theorem, one can verify the following: If $X$ is a $k$-scheme of finite type such that every local ring of $X$ is pseudo-rational as defined in \cite[1.8]{SmithFrat}, then $X$ is also pseudo-rational as defined in \cite[1.2]{ratsing}.
\end{rmrk}

\subsection{Global affine Schubert varieties} \label{S.global} We continue using the notation introduced in Section \ref{S2.1}. Let $C = \mathbb{A}^1_k$. Throughout this paper we will denote by $0$ the origin viewed as a closed point in $C$. Let $\hat{\mathcal{O}}_{0}$ be the completed local ring of $C$ at $0$ and let $C^\circ = C - 0$. Let $\mathcal{G}$ be a Bruhat--Tits group scheme over $C$ equipped with isomorphisms 
\begin{equation} \label{isochoice}
\restr{\mathcal{G}}{C^\circ} \cong G \times C^\circ, \quad L^+(\restr{\mathcal{G}}{{\hat{\mathcal{O}}_{0}}}) \cong I.
\end{equation}
See \cite[\S 3.2]{ZhuCoherence} for more information on the construction of $\mathcal{G}$.

For any smooth group scheme $H$ over $C$ (including $\mathcal{G}$) we let $\mathcal{E}_0$ be $H$ regarded as a trivial $H$-torsor. For a $k$-algebra $R$ let $C_R = C \times_{\Spec(k)} \Spec(R)$. If $x \in C(R)$ let $\Gamma_x \subset C_R$ be the graph of $x$, that is, the closed subscheme $\Spec(R) \xrightarrow{(x, \id)} C \times_{\Spec(k)} \Spec(R)$. The global affine Grassmannian $\Gr_{\mathcal{G}}$ is the functor on $k$-algebras defined by

$$\Gr_{\mathcal{G}}(R) = \left\{  (x, \mathcal{E}, \beta)  \: | \: x \in C(R), \: \mathcal{E} \text{ is a } \mathcal{G}\text{-torsor on } C_R, \:  \beta \colon \restr{ \mathcal{E}}{C_R - \Gamma_x} \cong \restr{ \mathcal{E}_0}{C_R - \Gamma_x} \right\}.$$ 
In the above definition we really mean the set of such objects up to the equivalence relation $(x, \mathcal{E}, \beta) \sim (x, \mathcal{E}', \beta')$ if there is an isomorphism $\mathcal{E} \cong \mathcal{E}'$ which respects the trivializations, but we will suppress this detail. The functor $\Gr_{\mathcal{G}}$ is represented by an ind-projective scheme over $C$ by \cite[5.5]{PappasZhu}. 

Given $x \in C(R)$ let $\hat{\Gamma}_x$ be the formal completion of $C_R$ along $\Gamma_x$ and let $\hat{\Gamma}^\circ_x = \hat{\Gamma}_x - \Gamma_x$. The global analogue of $LG$ is the functor
$$\mathcal{L}\mathcal{G}(R) = \{ (x, \beta) \: : \: x \in C(R), \: \beta \in \mathcal{G}(\hat{\Gamma}^\circ_x) \}.$$
The global analogue of $L^+G$ is the functor
$$\mathcal{L}^+\mathcal{G}(R) = \{ (x, \beta) \: : \: x \in C(R), \: \beta \in \mathcal{G}(\hat{\Gamma}_x) \}.$$
As in \cite[3.1]{ZhuCoherence}, a lemma of Beauville--Laszlo \cite{BeauvilleLaszlo} implies that there is a natural bijection
$$\Gr_{\mathcal{G}}(R) \cong  \left\{  (x, \mathcal{E}, \beta)  \: | \: x \in C(R), \: \mathcal{E} \text{ is a } \mathcal{G}\text{-torsor on } \hat{\Gamma}_x, \:  \beta \colon \restr{ \mathcal{E}}{\hat{\Gamma}^\circ_x} \cong \restr{ \mathcal{E}_0}{\hat{\Gamma}^\circ_x} \right\}.$$
Then $\mathcal{L}^+\mathcal{G}$ acts on $\Gr_{\mathcal{G}}$ by changing the trivialization $\beta$, and there is an isomorphism
$$\mathcal{L}\mathcal{G} / \mathcal{L}^+\mathcal{G} \cong \Gr_{\mathcal{G}}.$$ 
Our choice of isomorphisms in (\ref{isochoice}) induces isomorphisms
\begin{equation} \label{isochoice2} 
\restr{\Gr_{\mathcal{G}}}{C^{\circ}} \cong \Gr \times C^\circ, \quad (\Gr_{\mathcal{G}})_{0} \cong \mathcal{F}\ell
\end{equation} and
\begin{equation} \label{isochoice3} \mathcal{L}^+\restr{\mathcal{G}}{C^{\circ}} \cong L^+G \times C^\circ, \quad (\mathcal{L}^+\mathcal{G})_{0} \cong I. 
\end{equation}
Via the isomorphisms (\ref{isochoice2}) and (\ref{isochoice3}), the action of 
$\mathcal{L}^+ \mathcal{G}$ on $\Gr_{\mathcal{G}}$ is compatible with the action of $L^+G$ on $\Gr$ and the action of $I$ on $\mathcal{F}\ell$.

We define $\overline{\Gr}_{\mathcal{G}, \mu}$ to be the reduced closure of $\Gr_{\leq \mu} \times C^{\circ}$ in $\Gr_{\mathcal{G}}$. The scheme $\overline{\Gr}_{\mathcal{G}, \mu}$ is stable under the action of $\mathcal{L}^+ \mathcal{G}$, and our definition agrees with that in \cite[3.1]{ZhuCoherence} because $G$ is split. We can now define the local model $M_{\mu}$. 

\begin{defn} \label{locdef}
The local model $M_\mu$ is the fiber of $\overline{\Gr}_{\mathcal{G}, \mu}$ over the completed local ring at $0 \in C$. 
\end{defn}
Thus the generic fiber of $M_\mu$ is isomorphic to $\Gr_{\leq \mu} \times \Spec(k(\!( t )\!))$. The following theorem is due to Zhu in the case $G_{\text{der}}$ is absolutely almost simple and simply connected and was extended by Haines--Richarz to the general case. 
\begin{thm}[{\cite[Thm. 3]{ZhuCoherence},  \cite[5.14]{RicharzWeil}, \cite[2.1]{RicharzLocalModels}}] \label{ZhuFiber}
If $p \nmid |\pi_1(G_{\textnormal{der}})|$ the fiber $(\overline{\Gr}_{\mathcal{G}, \mu})_{0}$ is reduced. Without any assumptions on $p$, the reduced fiber satisfies $$(\overline{\Gr}_{\mathcal{G}, \mu})_{0, \, \textnormal{red}} \cong \mathcal{A}(\mu).$$
\end{thm}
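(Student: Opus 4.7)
The plan is to decompose the theorem into three stages: flatness of $\overline{\Gr}_{\mathcal{G},\mu}\to C$, set-theoretic identification of the reduced special fiber with $\mathcal{A}(\mu)$, and then scheme-theoretic reducedness under the hypothesis $p\nmid|\pi_1(G_{\text{der}})|$. Flatness is automatic, since $\overline{\Gr}_{\mathcal{G},\mu}$ is by construction an integral ind-projective scheme dominating the regular one-dimensional base $C$; consequently every fiber has the same dimension $d:=\dim\Gr_{\leq\mu}=\langle 2\rho,\mu\rangle$.

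For the set-theoretic identification of $(\overline{\Gr}_{\mathcal{G},\mu})_{0,\text{red}}$ with $\mathcal{A}(\mu)$, I would produce, for each $\lambda\in\Lambda_\mu$, a section of $\overline{\Gr}_{\mathcal{G},\mu}\to C$ constructed from the cocharacter $\lambda$ whose restriction to $C^\circ$ lies in $\Gr_\mu\subset\Gr_{\leq\mu}$ under the isomorphism (\ref{isochoice2}), and whose value at $0$ is the element $t_\lambda\in\mathcal{F}\ell$. Combining this with $\mathcal{L}^+\mathcal{G}$-stability of $\overline{\Gr}_{\mathcal{G},\mu}$ and the identification $(\mathcal{L}^+\mathcal{G})_{0}\cong I$ from (\ref{isochoice3}) forces $\mathcal{F}\ell_{t_\lambda}\subseteq(\overline{\Gr}_{\mathcal{G},\mu})_{0,\text{red}}$ for every $\lambda\in\Lambda_\mu$, and hence $\mathcal{A}(\mu)\subseteq(\overline{\Gr}_{\mathcal{G},\mu})_{0,\text{red}}$. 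For the reverse inclusion, the quantity $\ell(t_\lambda)=\sum_{\alpha>0}|\langle\alpha,\lambda\rangle|$ is $W$-invariant on $\Lambda_\mu$ and equals $d$, so $\mathcal{A}(\mu)$ is purely $d$-dimensional; together with equidimensionality of the special fiber (by flatness) and a specialization argument tracking points of $\Gr_\mu$ as the parameter moves to $0$, this shows that every top-dimensional $I$-stable component of the special fiber is $\mathcal{F}\ell_{t_\lambda}$ for some $\lambda\in\Lambda_\mu$, which combined with the Bruhat closure relations recovers all of $\mathcal{A}(\mu)$.

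The main obstacle, where essentially all of the technical work lies, is reducedness of the scheme-theoretic special fiber $X_0:=(\overline{\Gr}_{\mathcal{G},\mu})_{0}$ under the assumption $p\nmid|\pi_1(G_{\text{der}})|$. Here I would follow Zhu's coherence strategy. By flatness, for an ample line bundle $\mathcal{L}$ on $\overline{\Gr}_{\mathcal{G},\mu}$ the Euler characteristic $\chi(X_0,\mathcal{L}|_{X_0})$ equals the Euler characteristic of $\mathcal{L}$ restricted to the generic fiber $\Gr_{\leq\mu}$. If one can establish that $\chi(\mathcal{A}(\mu),\mathcal{L}|_{\mathcal{A}(\mu)})$ also equals this common value, then the canonical surjection $\mathcal{O}_{X_0}\twoheadrightarrow\mathcal{O}_{\mathcal{A}(\mu)}$ has trivial kernel after twisting by any sufficiently positive $\mathcal{L}$, which forces the scheme-theoretic identification $X_0\cong\mathcal{A}(\mu)$. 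The required equality of Euler characteristics is exactly the Pappas--Rapoport coherence conjecture, established in Zhu's setting via Frobenius splittings on Beilinson--Drinfeld-type global affine Grassmannians together with vanishing of higher cohomology of ample line bundles. The Haines--Richarz extension beyond the simply connected derived group reduces to Zhu's case by passing through the simply connected cover $G_{\text{der}}^{\text{sc}}\to G_{\text{der}}$ and carefully matching connected components under the induced central isogeny.
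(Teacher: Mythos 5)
First, a point of calibration: the paper does not prove this statement at all --- it is imported wholesale from the literature (Zhu's proof of the coherence conjecture for the simply connected case, and Haines--Richarz for the general statement), so what you have written is really an outline of the cited proofs rather than an alternative to anything in the paper. Your stage 1 (flatness, hence fibers of dimension $d=\langle 2\rho,\mu\rangle$), the easy inclusion $\mathcal{A}(\mu)\subseteq(\overline{\Gr}_{\mathcal{G},\mu})_{0,\mathrm{red}}$ via sections attached to $\lambda\in\Lambda_\mu$ and $I$-stability, and the coherence/Euler-characteristic mechanism in stage 3 (flatness matches $\chi$ of the special and generic fibers, Frobenius splitting kills higher cohomology, the Pappas--Rapoport coherence identity matches $h^0$ on $\mathcal{A}(\mu)$ with $h^0$ on $\Gr_{\leq\mu}$, and then the kernel of $\mathcal{O}_{X_0}\twoheadrightarrow\mathcal{O}_{\mathcal{A}(\mu)}$ dies for $\mathcal{L}^{\otimes n}$, $n\gg 0$) are indeed the skeleton of Zhu's argument.

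There are, however, two genuine gaps. First, your stage-2 ``specialization argument'' for the reverse set-theoretic inclusion is not a proof and would fail as stated: equidimensionality plus $I$-stability only tells you that each top-dimensional component of the special fiber is some $\mathcal{F}\ell_{w}$ with $\ell(w)=d$ inside $\pi^{-1}(\Gr_{\leq\mu})=\mathcal{F}\ell_{t_\mu^{w_0}}$, and there are in general many such $w$ that are not translations $t_\lambda$ with $\lambda\in\Lambda_\mu$; the naive bounds obtainable this way give (at best) the $\mu$-permissible set, which by Haines--Ng\^{o} strictly contains $\mathrm{Adm}(\mu)$ in general. In Zhu's proof this hard containment is not established separately --- it falls out of the coherence argument together with the easy inclusion --- so you should delete the stage-2 reverse inclusion and let stage 3 carry it. Second, once you do that, the assertion ``without any assumption on $p$'' for the reduced fiber is left unproved: the coherence/Frobenius-splitting machinery requires $p\nmid|\pi_1(G_{\mathrm{der}})|$, and your closing appeal to the simply connected cover addresses only the extension of the reducedness statement; moreover a central isogeny (or a $z$-extension as in Remark \ref{assumption}) induces universal homeomorphisms on components, which is enough to transport the topological identification but does \emph{not} transport reducedness, so the reduction as you describe it proves neither part in the non-simply-connected case without further input. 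This is precisely the content of the Haines--Richarz references \cite[5.14]{RicharzWeil}, \cite[2.1]{RicharzLocalModels} that the paper cites, and your sketch should either invoke them or supply the missing normality/descent arguments.
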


For each integer $n > 0$ let $\Gamma_{x,n}$ be the $n$th nilpotent thickening of $\Gamma_x$. The $n$th jet group of $\mathcal{G}$ is
$$\mathcal{L}^+_n \mathcal{G} (R) = \{ (x, \beta) \: : \: x \in C(R), \: \beta \in \mathcal{G}(\Gamma_{x,n}) \}.$$ This functor is represented by a smooth affine group scheme over $C$. For each $\mu \in X_*(T)^+$ the action of $\mathcal{L}^+ \mathcal{G}$ on $\overline{\Gr}_{\mathcal{G}, \mu}$ factors through $\mathcal{L}^+_n \mathcal{G}$ for sufficiently large $n$ depending on $\mu$. If $x \in C^\circ(k)$ then $(\mathcal{L}^+_n \mathcal{G})_x \cong L^n G$, and $(\mathcal{L}^+_n \mathcal{G})_0 \cong L^n(\restr{\mathcal{G}}{{\hat{\mathcal{O}}_{0}}})$.

Finally, let $\underline{G}$ be the constant group scheme $G \times C$. By replacing $\mathcal{G}$ with $\underline{G}$ in the above definitions we get the ind-scheme $\Gr_{\underline{G}}$, which is naturally isomorphic to $\Gr \times C$. There is a natural morphism $\mathcal{G} \rightarrow \underline{G}$ which induces a morphism $\pi_{\mathcal{G}} \colon \Gr_{\mathcal{G}} \rightarrow \Gr_{\underline{G}}$. By taking the fibers of $\pi_{\mathcal{G}}$ over $C^{\circ}$ and $0$ we get the following diagram with Cartesian squares:
$$\xymatrix{
\restr{\Gr_{\mathcal{G}}}{C^{\circ}} \ar[r]^{j_{\mathcal{G}}} \ar[d]^{\sim} & \Gr_{\mathcal{G}} \ar[d]^{\pi_{\mathcal{G}}} & \mathcal{F}\ell \ar[l]_{i_{\mathcal{G}}} \ar[d]^{\pi} \\
\restr{\Gr_{\underline{G}}}{C^\circ} \ar[r]^{j_{\underline{G}}}  & \Gr_{\underline{G}}  & \Gr \ar[l]_{i_{\underline{G}}}  }
$$

\subsection{The definition of $\mathcal{Z}$} \label{S2.2}
For this section we assume that $k$ is an algebraically closed field of characteristic $p > 0$. We refer the reader to \cite[\S 2]{modpGr} for an introduction to the category $P_c^b(X, \mathbb{F}_p)$ of perverse $\mathbb{F}_p$-sheaves on a separated scheme $X$ of finite type over $k$. This is an abelian subcategory of $D_c^b(X_{\text{\'{e}t}}, \mathbb{F}_p)$ in which every object has finite length. As in the case of perverse $\overline{\mathbb{Q}}_\ell$-sheaves, there are operations such as the intermediate extension functor and pullback along smooth morphisms.

Now suppose $X$ is a separated scheme of finite type over $C$. Let $j \colon U \rightarrow X$ be the inclusion of the fiber of $X$ over $C^\circ$ and let $i \colon Z \rightarrow X$ be the inclusion of the fiber of $X$ over $0$. For $\mathcal{F}^\bullet \in P_c^b(U, \mathbb{F}_p)$ we define the nearby cycles of $\mathcal{F}^\bullet$ by
$$\Psi_X(\mathcal{F}^\bullet) := Ri^*(j_{!*}\mathcal{F}^\bullet)[-1].$$
This defines an additive functor $$\Psi_X \colon P_c^b(U, \mathbb{F}_p) \rightarrow D_c^b(Z_{\text{\'{e}t}}, \mathbb{F}_p).$$

\begin{prop} \label{nearbysmooth}
Suppose $f \colon X' \rightarrow X$ is a smooth, separated  morphism over $C$ of relative dimension $d$ and that $X'$ has fibers $U'$ and $Z'$ over $C^\circ$ and $0$, respectively. Then there is a natural isomorphism of functors
$$R(\restr{f}{Z'})^*[d] \circ \Psi_X \cong \Psi_{X'} \circ R(\restr{f}{U'})^*[d] \colon P_c^b(U, \mathbb{F}_p) \rightarrow D_c^b(Z'_{\textnormal{\'{e}t}}, \mathbb{F}_p).$$
\end{prop}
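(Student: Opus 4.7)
The plan is to reduce the statement to two well-known compatibilities: smooth pullback commutes with intermediate extensions (up to the cohomological shift by $d$), and smooth pullback commutes with restriction to a closed subscheme in a Cartesian square. Once these are in hand, the isomorphism of functors will follow by composing them and carefully tracking the shift $[-1]$.

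First, I would set up the Cartesian diagram arising from base change of $f \colon X' \to X$ along $C^\circ \hookrightarrow C \hookleftarrow \{0\}$, namely
$$
\xymatrix{
U' \ar[r]^{j'} \ar[d]_{f|_{U'}} & X' \ar[d]^{f} & Z' \ar[l]_{i'} \ar[d]^{f|_{Z'}} \\
U \ar[r]^{j} & X & Z \ar[l]_{i}
}
$$
where the fibers $U', Z'$ over $C^\circ$ and $0$ are obtained by base change, and the restrictions $f|_{U'}$ and $f|_{Z'}$ are again smooth and separated of relative dimension $d$. Since $f|_{U'}$ is smooth of relative dimension $d$, the shifted pullback $R(f|_{U'})^*[d]$ is t-exact for the perverse t-structure, so it does send $P_c^b(U,\mathbb{F}_p)$ into $P_c^b(U',\mathbb{F}_p)$, and $\Psi_{X'}$ may be applied to its output.

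The key step is to show that for $\mathcal{F}^\bullet \in P_c^b(U,\mathbb{F}_p)$ there is a canonical isomorphism
$$
Rf^*[d]\bigl(j_{!*}\mathcal{F}^\bullet\bigr) \;\cong\; j'_{!*}\bigl(R(f|_{U'})^*[d]\,\mathcal{F}^\bullet\bigr)
$$
in $P_c^b(X',\mathbb{F}_p)$. The standard argument is to characterize $j_{!*}\mathcal{F}^\bullet$ as the image (in the perverse abelian category) of the natural morphism $j_!\mathcal{F}^\bullet \to Rj_*\mathcal{F}^\bullet$, and then to invoke: (i) proper base change for $j_!$ along the Cartesian square, (ii) smooth base change for $Rj_*$, and (iii) the fact that $Rf^*[d]$ is t-exact so it commutes with taking images in the perverse heart. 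This extends the corresponding $\overline{\mathbb{Q}}_\ell$-statement to $\mathbb{F}_p$-coefficients; since the perverse t-structure on $D_c^b(-_{\text{\'{e}t}},\mathbb{F}_p)$ and the t-exactness of shifted smooth pullback are set up in \cite{modpGr} in the same formalism, the same argument applies verbatim.

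The second compatibility is the base change isomorphism $Ri'^* \circ Rf^* \cong R(f|_{Z'})^* \circ Ri^*$ associated to the right-hand Cartesian square above; this is automatic since pullback commutes with pullback. Combining everything, for $\mathcal{F}^\bullet \in P_c^b(U,\mathbb{F}_p)$ we compute
$$
\Psi_{X'}\!\bigl(R(f|_{U'})^*[d]\,\mathcal{F}^\bullet\bigr) = Ri'^*\bigl(j'_{!*}(R(f|_{U'})^*[d]\,\mathcal{F}^\bullet)\bigr)[-1] \cong Ri'^*\bigl(Rf^*[d]\,j_{!*}\mathcal{F}^\bullet\bigr)[-1]
$$
$$
\cong R(f|_{Z'})^*[d]\,Ri^*\bigl(j_{!*}\mathcal{F}^\bullet\bigr)[-1] = R(f|_{Z'})^*[d] \circ \Psi_X(\mathcal{F}^\bullet),
$$
and naturality in $\mathcal{F}^\bullet$ is clear from the naturality of each intermediate isomorphism. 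The main obstacle is the smooth-pullback/intermediate-extension compatibility; everything else is formal base change and shift bookkeeping.
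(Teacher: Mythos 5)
Your overall reduction is exactly the paper's: Proposition \ref{nearbysmooth} is deduced from the single statement that shifted pullback along a smooth morphism commutes with intermediate extensions, combined with the trivial compatibility $Ri'^{*}\circ Rf^{*}\cong R(\restr{f}{Z'})^{*}\circ Ri^{*}$ and the shift bookkeeping; the paper's proof is one line citing \cite[2.16]{modpGr} for that key fact, and your final computation and naturality argument are fine.

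The gap is in your sketched justification of the key isomorphism $Rf^{*}[d]\,j_{!*}\mathcal{F}^\bullet \cong j'_{!*}\bigl(R(\restr{f}{U'})^{*}[d]\mathcal{F}^\bullet\bigr)$. You propose to run the classical $\overline{\mathbb{Q}}_\ell$ argument verbatim: realize $j_{!*}\mathcal{F}^\bullet$ as the image of ${}^{p}H^0(j_!\mathcal{F}^\bullet)\to{}^{p}H^0(Rj_*\mathcal{F}^\bullet)$ and commute $Rf^{*}[d]$ past both sides, invoking smooth base change for $Rj_*$. That theorem requires the torsion of the coefficients to be invertible on the base and fails for $\mathbb{F}_p$-sheaves on schemes of characteristic $p$: already for $j\colon \mathbb{G}_m\hookrightarrow\mathbb{A}^1$ and a smooth morphism such as an $\mathbb{A}^1$-projection, the stalks of $Rj_*\mathbb{F}_p$ involve Artin--Schreier cohomology and do not commute with the base change. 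This is precisely the phenomenon the paper flags in Remark \ref{classicnearby}: the classical compatibilities for nearby cycles are proved using $\ell\neq p$ in an essential way, and circumventing such arguments is part of the point of the ad hoc functor $\Psi_X$. So as written the central step of your proof does not go through. The repair is simply to quote \cite[2.16]{modpGr} for the compatibility of smooth pullback with intermediate extension of $\mathbb{F}_p$-perverse sheaves (which is all the paper does), or to reprove it by a route adapted to $\mathbb{F}_p$-coefficients, e.g.\ via the characterization of $j_{!*}$ among perverse extensions together with the $t$-exactness of $Rf^{*}[d]$, rather than via the $j_!\to Rj_*$ image construction plus smooth base change.
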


\begin{proof}
This follows from the fact that pullback along a smooth morphism commutes with taking intermediate extensions by \cite[2.16]{modpGr}. 
\end{proof}

\begin{rmrk} If $f \colon X' \rightarrow X$ is a proper morphism then the functor $Rf_!$ does not preserve perversity in general. However, if $\mathcal{F}^\bullet \in P_c^b(U', \mathbb{F}_p)$ and $R(\restr{f}{U'})_!(\mathcal{F}^\bullet)$ happens to be perverse, then it makes sense to ask whether $R(\restr{f}{Z'})_!(\Psi_{X'} (\mathcal{F}^\bullet))$ and $\Psi_{X}(R(\restr{f}{U'})_!(\mathcal{F}^\bullet))$ are isomorphic. We will see examples (such as the proof of Theorem \ref{mainthm} (ii)) where the presence of $F$-rational singularities allows us to prove there is such an isomorphism, but we are unsure about the general case. By \cite[Exp. XIII 2.1.7.1]{SGA7II}, the analogous fact is true for the usual definition of the nearby cycles functor (see Remark \ref{classicnearby}) due to the proper base change theorem.
\end{rmrk}

\begin{rmrk} We note that by \cite[2.7 (ii)]{modpGr}, $\Psi_X(\mathcal{F}^\bullet) \in {}^pD^{\leq 0}({Z_\text{\'{e}t}}, \mathbb{F}_p)$. By \cite[4.4.2]{BBD}, the same is true for $\mathbb{F}_{\ell}$-sheaves with $\ell \neq p$ using the usual definition of the nearby cycles functor. As the usual nearby cycles functor commutes with Verdier duality for $\mathbb{F}_{\ell}$-sheaves by \cite[4.2]{Illusiemonodromie}, then it preserves perversity. There is no duality functor for $\mathbb{F}_p$-sheaves, and we do not know if $\Psi_X(\mathcal{F}^\bullet)$ is always perverse, but it is perverse in all of the examples we have computed.
\end{rmrk}

By the following lemma, we can naturally extend the definition of the nearby cycles functor to the ind-scheme $\Gr_{\mathcal{G}}$.

\begin{lem} \label{nearbyind}
Suppose $h \colon X' \rightarrow X$ is a closed immersion of separated $C$-schemes of finite type and that $X'$ has fibers $U'$ and $Z'$ over $C^\circ$ and $0$, respectively. Then there is a natural isomorphism of functors
$$ R(\restr{h}{Z'})_* \circ \Psi_{X'} \cong \Psi_X \circ R(\restr{h}{U'})_* \colon P_c^b(U', \mathbb{F}_p) \rightarrow D_c^b(Z_{\textnormal{\'{e}t}}, \mathbb{F}_p).$$ 
\end{lem}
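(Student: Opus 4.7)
The plan is to unravel the definition $\Psi = Ri^*(j_{!*}(-))[-1]$ on each side and move the closed-immersion pushforward past the two pieces of $\Psi$: first past $j_{!*}$, then past $Ri^*$ via proper base change. More precisely, the relevant commutative diagram has two Cartesian squares
$$\xymatrix{
U' \ar[r]^{j'} \ar[d]_{\restr{h}{U'}} & X' \ar[d]^{h} & Z' \ar[l]_{i'} \ar[d]^{\restr{h}{Z'}} \\
U \ar[r]^{j}  & X  & Z \ar[l]_{i}
}$$
in which $\restr{h}{U'}$ and $\restr{h}{Z'}$ are closed immersions, being base changes of $h$. Given these, the plan is to establish (a) $Rh_* \circ j'_{!*} \cong j_{!*} \circ R(\restr{h}{U'})_*$ and (b) $Ri^* \circ Rh_* \cong R(\restr{h}{Z'})_* \circ R(i')^*$, and to conclude by composing.

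For (a), I would first recall that for a closed immersion the pushforward $Rh_*$ equals the underived $h_*$ on constructible sheaves, is fully faithful, and is t-exact for the perverse t-structure (so it preserves perverse sheaves and commutes with ${}^pH^0$). Next I would verify at the level of the derived category the two sheaf-level identities $Rh_* \circ j'_! \cong j_! \circ R(\restr{h}{U'})_*$ and $Rh_* \circ Rj'_* \cong Rj_* \circ R(\restr{h}{U'})_*$. The second is immediate from functoriality of pushforward along $X' \xrightarrow{h} X$ versus $U' \xrightarrow{\restr{h}{U'}} U \xrightarrow{j} X$; the first holds because both compositions are extension by zero from $U'$ to $X$ along the two factorizations of the locally closed immersion $U' \hookrightarrow X$. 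Since $Rh_*$ is exact on the abelian category of perverse sheaves, it preserves the image defining the intermediate extension, yielding (a).

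For (b), I would invoke proper base change for the Cartesian square on the right (closed immersions are proper), which gives the isomorphism $Ri^* \circ Rh_* \cong R(\restr{h}{Z'})_* \circ R(i')^*$; in the $\mathbb{F}_p$ setting this is particularly simple because everything can be checked at stalks along the closed subscheme. Combining (a) and (b) and applying the definition of $\Psi$ yields the chain of canonical isomorphisms
$$\Psi_X(R(\restr{h}{U'})_*\mathcal{F}^\bullet) = Ri^*(j_{!*}(R(\restr{h}{U'})_*\mathcal{F}^\bullet))[-1] \cong Ri^*(Rh_*(j'_{!*}\mathcal{F}^\bullet))[-1] \cong R(\restr{h}{Z'})_*(R(i')^*(j'_{!*}\mathcal{F}^\bullet))[-1] = R(\restr{h}{Z'})_*(\Psi_{X'}\mathcal{F}^\bullet),$$
which is functorial in $\mathcal{F}^\bullet$.

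The only nontrivial step is (a), and within it the subtle point is that $j_{!*}$ is defined via the abelian category of perverse $\mathbb{F}_p$-sheaves. I expect the main obstacle to be invoking the t-exactness of closed immersion pushforward and its compatibility with the intermediate extension in the $\mathbb{F}_p$ setting, but both should follow from the general formalism set up in \cite{modpGr} in exactly the same way as for $\overline{\mathbb{Q}}_\ell$-sheaves, since the arguments only use the formal properties of the perverse t-structure and the six-functor formalism for constructible sheaves.
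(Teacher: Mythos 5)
Your proposal is correct and takes essentially the same route as the paper: commute the closed-immersion pushforward past the intermediate extension, then conclude by applying $Ri^*$ together with the proper base change theorem. The only difference is that the paper simply invokes \cite[2.6]{modpGr} (intermediate extension agrees with pushforward for closed immersions) for your step (a), whereas you re-derive that compatibility from the perverse t-exactness of $Rh_*$ and the image description of $j_{!*}$, which is a valid expansion of the same argument.
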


\begin{proof}
By taking the fibers of $h$ over $C^\circ$ and $0$ we get the following diagram with Cartesian squares:
$$\xymatrix{
U' \ar[r]^{j'} \ar[d] & X' \ar[d]^{h} & Z' \ar[l]_{i'} \ar[d] \\
U \ar[r]^{j}  & X  & Z \ar[l]_{i}  }
$$
Because the intermediate extension functor agrees with the usual pushforward functor for closed immersions then by \cite[2.6]{modpGr} we have
$$Rh_* \circ j_{!*}' \cong j_{!*} \circ R(\restr{h}{U'})_*.$$ Now the lemma follows by applying $Ri^*$ and the proper base change theorem.
\end{proof}

\begin{rmrk} \label{classicnearby}
Following \cite[Exp. XIII \S 1.3]{SGA7II}, there is another nearby cycles functor $\Psi_X'$ defined as follows. Replace $C$ with the henselization of its local ring at $0$. Let $\overline{\eta}$ be the spectrum of an algebraic closure of the function field of $C$, and let $\overline{C}$ be the normalization of $C$ in $\overline{\eta}$. Let $X_{\overline{C}} = X \times_C \overline{C}$ and $X_{\overline{\eta}} = X \times_C \overline{\eta}$. Then there are natural morphisms $\overline{j} \colon X_{\overline{\eta}} \rightarrow X_{\overline{C}}$ and $\overline{i} \colon Z \rightarrow X_{\overline{C}}$. If $\restr{\mathcal{F}^\bullet}{\overline{\eta}}$ is the restriction of $\mathcal{F}^\bullet \in D^b_c(U_{\text{\'{e}t}}, \mathbb{F}_p)$ to $X_{\overline{\eta}}$, one can define
$$\Psi'_X(\mathcal{F}^\bullet) := R \overline{i}^* R\overline{j}_* (\restr{\mathcal{F}^\bullet}{\overline{\eta}}).$$ For $\mathbb{F}_{\ell}$-sheaves with $\ell \neq p$, the functor $\Psi_X'$ preserves constructibility by \cite[Exp. 7, Th. 3.2]{SGA412} and commutes with smooth base change as in Proposition \ref{nearbysmooth} by \cite[Exp. XIII 2.1.7]{SGA7II}. The proofs make essential use of the hypothesis $\ell \neq p$, and part of our original motivation for defining $\Psi_X$ was to give short proofs of these two facts for $\mathbb{F}_p$-sheaves. It would be interesting to compare the functors $\Psi_X$ and $\Psi_X'$ for $\mathbb{F}_p$-sheaves.
\end{rmrk}

In \cite[\S 6]{modpGr} we defined the category $P_{L^+G}(\Gr, \mathbb{F}_p)$ of $L^+G$-equivariant perverse $\mathbb{F}_p$-sheaves on $\Gr$. We also defined the category $P_I(\mathcal{F} \ell, \mathbb{F}_p)$ and proved the following theorem.

\begin{thm}[{\cite[1.5]{modpGr}}] \label{simpleobjects}
The simple objects in $P_{L^+G}(\Gr, \mathbb{F}_p)$ and $P_I(\mathcal{F} \ell, \mathbb{F}_p)$ are the shifted constant sheaves $$\IC_\mu : = \mathbb{F}_p[\dim \Gr_{\leq \mu}] \in D_c^b(\Gr_{\leq \mu, \textnormal{\'{e}t}}, \mathbb{F}_p), \quad \quad \quad \quad \IC_{w}^{\mathcal{F}\ell} := \mathbb{F}_p[\dim \mathcal{F}\ell_w] \in D_c^b(\mathcal{F}\ell_{w, \textnormal{\'{e}t}}, \mathbb{F}_p),$$ for $\mu \in X_*(T)^+$, $w \in \tilde{W}$. 
\end{thm}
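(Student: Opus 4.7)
The plan is to follow the standard recipe for classifying simple equivariant perverse sheaves, with the decisive input being the $F$-rationality of affine Schubert varieties from Theorem \ref{globFreg}. First, observe that because the action of $L^+G$ on any orbit closure $\Gr_{\leq \mu}$ factors through a finite-dimensional jet quotient $L^nG$ (and similarly for $I$ on $\mathcal{F}\ell_w$), the classification of simple $L^+G$-equivariant perverse sheaves reduces to classifying simple equivariant local systems on individual orbits $\Gr_\mu$ (resp. $C(w)$) and taking their intermediate extensions $j_{!*}$ to the closure. The orbits are smooth: $C(w) \cong \mathbb{A}_k^{\ell(w)}$, and $\Gr_\mu$ is a homogeneous space for $L^nG$. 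The stabilizer of a point in $\Gr_\mu$ is the parahoric-type subgroup whose reductive quotient is the connected centralizer of $\mu$ in $G$, and the stabilizer of a point in $C(w)$ is a pro-unipotent group extended by a connected torus; in both cases the stabilizer is connected.

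Next, I would argue that the only simple $L^+G$-equivariant (resp. $I$-equivariant) $\mathbb{F}_p$-local system on each orbit is the constant sheaf $\mathbb{F}_p$. For a homogeneous space $H/H_x$ with $H$ and $H_x$ connected, any $H$-equivariant local system corresponds to a continuous $\mathbb{F}_p$-representation of the component group of $H_x$ on the fiber; since this group is trivial, the representation is trivial. This rules out the Artin-Schreier covers of $C(w) \cong \mathbb{A}_k^{\ell(w)}$ in the equivariant category, which would otherwise produce extra simples in the $\mathbb{F}_p$-setting. Consequently, every simple object of $P_{L^+G}(\Gr, \mathbb{F}_p)$ is of the form $j_{!*}(\mathbb{F}_p[\dim \Gr_\mu])$ for $j \colon \Gr_\mu \hookrightarrow \Gr_{\leq \mu}$, and every simple object of $P_I(\mathcal{F}\ell, \mathbb{F}_p)$ has the analogous form.

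Finally, to identify these intermediate extensions with the shifted constant sheaves $\mathbb{F}_p[\dim \Gr_{\leq \mu}]$ and $\mathbb{F}_p[\dim \mathcal{F}\ell_w]$ on the whole closures, I would invoke the cited result \cite[1.7]{modpGr}: on an integral $F$-rational variety $X$ of dimension $d$, the shifted constant sheaf $\mathbb{F}_p[d]$ is already a simple perverse sheaf. By Theorem \ref{globFreg} the varieties $\Gr_{\leq \mu}$ and $\mathcal{F}\ell_w$ are $F$-rational and integral, so $\mathbb{F}_p[\dim \Gr_{\leq \mu}]$ is simple perverse and agrees with $j_{!*}(\mathbb{F}_p[\dim \Gr_\mu])$ on the open dense orbit; by simplicity the two objects coincide. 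The hard part is precisely this last identification: in the $\overline{\mathbb{Q}}_\ell$-setting one appeals to Verdier duality, weight filtrations, or the decomposition theorem, none of which are available for $\mathbb{F}_p$-sheaves, so the $F$-singularity input really is indispensable. The connectedness of stabilizers is also delicate in this coefficient regime because Artin-Schreier local systems could otherwise contribute nontrivial simples on the affine-space orbits $C(w)$.
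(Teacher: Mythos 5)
Your outline is correct and is essentially the argument behind the cited result \cite[1.5]{modpGr}, which the present paper imports rather than reproves: reduce to equivariant local systems on the finitely many orbits through a finite-dimensional jet quotient, use connectedness of the stabilizers to force such local systems to be constant (in particular Artin--Schreier sheaves on $C(w)\cong\mathbb{A}^{\ell(w)}$ are not equivariant), and then use \cite[1.7]{modpGr} together with Theorem \ref{globFreg} to identify $j_{!*}$ of the shifted constant sheaf on the open orbit with the shifted constant sheaf on the whole Schubert variety, exactly as you say, since no duality or decomposition theorem is available. The only caveat is that Theorem \ref{globFreg} carries the hypothesis $p \nmid |\pi_1(G_{\text{der}})|$, so to reach the generality in which the statement is quoted one must also reduce to a central extension with simply connected derived group via universal homeomorphisms and topological invariance of the \'etale site, as in Remark \ref{assumption} (\emph{cf.} \cite[7.12]{modpGr}).
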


Let $\mathcal{F}^\bullet \in P_{L^+G}(\Gr, \mathbb{F}_p)^{\sss}$. Since $C$ is smooth then $\mathcal{F}^\bullet \overset{L}{\boxtimes} \mathbb{F}_p[1] \in P_c^b(\Gr \times C, \mathbb{F}_p)$ by \cite[2.15]{modpGr}. Via the isomorphism $\restr{\Gr_{\mathcal{G}}}{C^{\circ}} \cong \Gr \times C^\circ$ we view $\mathcal{F}^\bullet \overset{L}{\boxtimes} \restr{\mathbb{F}_p[1]}{C^\circ}$ as a perverse sheaf on $\restr{\Gr_{\mathcal{G}}}{C^{\circ}}$. We set $$\mathcal{Z}(\mathcal{F}^\bullet) := \Psi_{\Gr_{\mathcal{G}}}(\mathcal{F}^\bullet \overset{L}{\boxtimes} \restr{\mathbb{F}_p[1]}{C^\circ}) \in D_c^b(\mathcal{F}\ell_{\text{\'{e}t}}, \mathbb{F}_p).$$

In \cite[\S 6]{modpGr} we defined the convolution product $\mathcal{F}_1^\bullet * \mathcal{F}_2^{\bullet}$ of $\mathcal{F}_1^\bullet$, $\mathcal{F}_2^{\bullet} \in P_{L^+G}(\Gr, \mathbb{F}_p)$. We now define the convolution product of two perverse sheaves in $P_I(\mathcal{F}\ell, \mathbb{F}_p)$. Since the situation is analogous to $P_{L^+G}(\Gr, \mathbb{F}_p)$ we will be brief. To begin, we have the convolution diagram
\begin{equation}\ \label{convdiagram}
\mathcal{F}\ell \times \mathcal{F}\ell \xleftarrow{p} LG \times \mathcal{F}\ell \xrightarrow{q} LG \times^{I} \mathcal{F}\ell \xrightarrow{m} \mathcal{F}\ell. 
\end{equation} Here $p$ is the quotient map $LG \rightarrow \mathcal{F}\ell$ on the first factor and the identity map on the second factor. The map $q$ is the quotient by the diagonal action of $I$ given by $g \cdot (g_1, g_2) = (g_1 g^{-1}, g g_2)$, and $m$ is the multiplication map. We will also use the notation $\mathcal{F}\ell \simtimes \mathcal{F}\ell$ for $LG \times^I \mathcal{F}\ell$.

Given $\mathcal{F}_1^\bullet$, $\mathcal{F}_2^{\bullet} \in P_I(\mathcal{F} \ell, \mathbb{F}_p)$ we claim that there is a unique perverse sheaf $\mathcal{F}_1^\bullet \overset{\sim}{\boxtimes} \mathcal{F}_2^{\bullet} \in P_I(LG \times^{I} \mathcal{F}\ell , \mathbb{F}_p)$ such that $Rp^*(\mathcal{F}_1^\bullet \overset{L}{\boxtimes} \mathcal{F}_2^{\bullet} ) \cong Rq^*(\mathcal{F}_1^\bullet \overset{\sim}{\boxtimes} \mathcal{F}_2^{\bullet} ).$ The proof of this is analogous to the case of the affine Grassmannian in \cite[6.2]{modpGr} so we omit it. We are also suppressing the fact that because $LG \times \mathcal{F}\ell$ is not of ind-finite type, we must replace the $I$-torsors $p$ and $q$ by torsors for a finite type quotient of $I$ depending on the support of $\mathcal{F}_1^\bullet$ and $\mathcal{F}_2^{\bullet}$. 

The convolution of $\mathcal{F}_1^\bullet$ and $\mathcal{F}_2^{\bullet}$ is $$\mathcal{F}_1^\bullet * \mathcal{F}_2^{\bullet} = Rm_!(\mathcal{F}_1^\bullet \overset{\sim}{\boxtimes} \mathcal{F}_2^{\bullet}) \in D_c^b(\mathcal{F}\ell_{\text{\'{e}t}}, \mathbb{F}_p).$$ We may also write $Rm_*$ instead of $Rm_!$ because $\mathcal{F}_1^\bullet \overset{\sim}{\boxtimes} \mathcal{F}_2^{\bullet}$ is supported on a proper scheme. As in the case of $\overline{\mathbb{Q}} _\ell$-coefficients, $\mathcal{F}_1^\bullet * \mathcal{F}_2^{\bullet}$ is not perverse in general. However, if $\mathcal{F}_1^\bullet * \mathcal{F}_2^{\bullet}$ is perverse then it is also $I$-equivariant by \cite[3.2]{modpGr}. 

\begin{rmrk}
Using the method in \cite[3.13]{modpGr} we can define the category $P_{\mathcal{L}^+\mathcal{G}}(\Gr_{\mathcal{G}}, \mathbb{F}_p)$ of $\mathcal{L}^+\mathcal{G}$-equivariant perverse $\mathbb{F}_p$-sheaves on $\Gr_{\mathcal{G}}$. By the same reasoning we can define other categories of equivariant perverse sheaves on ind-schemes we introduce later, such as $P_{\mathcal{L}^+\mathcal{G}}(\Gr_{\mathcal{G}}^{\conv}, \mathbb{F}_p)$ (see Section \ref{BDsec}). As $\mathcal{L}^+_n \mathcal{G} \rightarrow C$ has geometrically connected fibers for every $n$ then $P_{\mathcal{L}^+\mathcal{G}}(\Gr_{\mathcal{G}}, \mathbb{F}_p)$ is a full subcategory of $P_c^b(\Gr_{\mathcal{G}}, \mathbb{F}_p)$.
\end{rmrk}

\subsection{First properties of $\mathcal{Z}$}
In this section we prove parts (i) and (ii) of Theorem \ref{mainthm}. The main ingredient will be the $F$-rationality of $\overline{\Gr}_{\mathcal{G}, \mu}$. We assume that $k$ is a perfect field of characteristic $p> 2$ until Proposition \ref{constprop}, where we require $k$ to be algebraically closed. Throughout this section we also assume that $G_{\text{der}}$ is absolutely almost simple and simply connected. We will explain how to remove the simple connectedness hypothesis in Remark \ref{assumption}. To begin, we recall the following results.

\begin{thm}[{\cite[9.1]{PappasZhu}, \cite[2.1]{RicharzLocalModels}}] \label{thm2.1}
The schemes $\overline{\Gr}_{\underline{G}, \mu}$ and $\overline{\Gr}_{\mathcal{G}, \mu}$ are integral, normal, and Cohen--Macaulay.
\end{thm}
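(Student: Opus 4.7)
The plan is to treat the two cases separately, reducing the first to known properties of the local affine Schubert varieties and tackling the second via a flat degeneration argument.

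For $\overline{\Gr}_{\underline{G}, \mu}$, the canonical isomorphism $\Gr_{\underline{G}} \cong \Gr \times C$ identifies $\overline{\Gr}_{\underline{G}, \mu}$ with $\Gr_{\leq \mu} \times C$. Since $C$ is smooth and $\Gr_{\leq \mu}$ is integral, normal, and Cohen--Macaulay by Theorem \ref{globFreg} combined with the chain of implications recorded in Section \ref{S2.1}, the product inherits all three properties. This case is immediate.

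For $\overline{\Gr}_{\mathcal{G}, \mu}$, integrality is immediate because it is the reduced closure in a separated ind-scheme of the integral subscheme $\Gr_{\leq \mu} \times C^{\circ}$. Moreover, every irreducible component of $\overline{\Gr}_{\mathcal{G}, \mu}$ dominates $C$, and over the regular one-dimensional base $C$ this is equivalent to flatness of $\overline{\Gr}_{\mathcal{G}, \mu} \to C$. For Cohen--Macaulayness, once normality is in hand, I would use the fiberwise criterion: the generic fiber $\Gr_{\leq \mu}$ is Cohen--Macaulay by Theorem \ref{globFreg}, and the reduced special fiber is $\mathcal{A}(\mu)$ by Theorem \ref{ZhuFiber}. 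Under the standing hypothesis that $G_{\text{der}}$ is absolutely almost simple and simply connected, so $p \nmid |\pi_1(G_{\text{der}})|$, Theorem \ref{ZhuFiber} moreover ensures the special fiber is already reduced, and the union of affine Schubert varieties $\mathcal{A}(\mu) \subset \mathcal{F}\ell$ is Cohen--Macaulay because each $\mathcal{F}\ell_w$ is Cohen--Macaulay and suitable compatible Frobenius splittings propagate this to the union along the standard Kumar--Mathieu style argument.

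The main obstacle is normality of $\overline{\Gr}_{\mathcal{G}, \mu}$, which cannot be checked fiberwise since $\mathcal{A}(\mu)$ is typically reducible and hence not normal. The approach I would take is the one implicit in \cite{ZhuCoherence} and \cite{PappasZhu}: construct a convolution-type resolution $\widetilde{\Gr}_{\mathcal{G}, \mu} \to \overline{\Gr}_{\mathcal{G}, \mu}$ with smooth (hence normal) source, and prove that this proper birational morphism has geometrically connected fibers. Connectedness forces the map to factor through the normalization, and combined with surjectivity this yields normality of $\overline{\Gr}_{\mathcal{G}, \mu}$. The nontrivial input making this work is Zhu's coherence conjecture, which equates dimensions of spaces of global sections of ample line bundles on the twisted global affine Grassmannian $\Gr_{\mathcal{G}}$ and on its constant counterpart $\Gr_{\underline{G}}$, thereby controlling the fibers of the resolution. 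This coherence statement, together with the reducedness of the special fiber (Theorem \ref{ZhuFiber}), is the hardest ingredient and the genuine obstacle; everything else in the proof is formal once it is available.
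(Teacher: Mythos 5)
There is no proof of this statement in the paper to compare against: Theorem \ref{thm2.1} is imported wholesale from \cite[9.1]{PappasZhu} and \cite[2.1]{RicharzLocalModels}. Judged as a standalone argument, your first part (the constant-group case $\overline{\Gr}_{\underline{G},\mu}\cong \Gr_{\leq\mu}\times C$) and the integrality and flatness of $\overline{\Gr}_{\mathcal{G},\mu}$ are fine, but the two substantive steps for $\overline{\Gr}_{\mathcal{G},\mu}$ have genuine gaps. For normality, the inference ``proper birational morphism from a smooth source with geometrically connected fibers, hence the target is normal'' is false: the normalization of a cuspidal curve is proper, birational, bijective (so with connected fibers) and has smooth source, yet the target is not normal. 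Any dominant map from a normal scheme factors through the normalization, so the factorization you invoke gives nothing; what one actually needs is $f_*\mathcal{O}_{\widetilde{X}}\cong\mathcal{O}_X$, and producing that identity is precisely where the hard cohomological input (the coherence conjecture, Frobenius-splitting and line-bundle computations of \cite{ZhuCoherence}) enters --- it cannot be replaced by a connectedness statement. Moreover no smooth resolution of $\overline{\Gr}_{\mathcal{G},\mu}$ over all of $C$ is available in the cited literature; and the fiberwise route via Hironaka's lemma \cite[5.12.8]{EGA4II}, which this paper does use in Proposition \ref{BDfiber}, is barred here for exactly the reason you note: the special fiber $\mathcal{A}(\mu)$ is reducible, hence not normal.

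The Cohen--Macaulayness step is also reversed. Compatible Frobenius splittings do not imply Cohen--Macaulayness, and a union of Cohen--Macaulay Schubert varieties need not be Cohen--Macaulay; there is no ``standard Kumar--Mathieu style'' propagation to the admissible union. Indeed, Cohen--Macaulayness of $\mathcal{A}(\mu)$ was a conjecture of G\"ortz, and its proof in \cite[2.1]{RicharzLocalModels} runs in the opposite direction to your argument: one first shows the \emph{global} scheme $\overline{\Gr}_{\mathcal{G},\mu}$ is Cohen--Macaulay, using the Frobenius splitting of $\overline{\Gr}_{\mathcal{G},\mu}$ compatible with (hence along) the divisor $(\overline{\Gr}_{\mathcal{G},\mu})_0$ from \cite{ZhuCoherence} together with Cohen--Macaulayness of the generic fiber, via \cite[5.4]{BlickleSchwede} (this is exactly the pattern the present paper follows for $\overline{\Gr}^{\BD}_{\mathcal{G},\mu,w}$), and only then deduces that the special fiber is Cohen--Macaulay by cutting with the uniformizer, as in Corollary \ref{fibercor}. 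As written, your fiberwise criterion assumes the statement that is actually the hard output of the theorem, so the proposal is circular at this point even granting Theorem \ref{ZhuFiber} and the coherence conjecture as black boxes.
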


\begin{cor} \label{fibercor}
The fiber $(\overline{\Gr}_{\mathcal{G}, \mu})_{0}$ is Cohen--Macaulay, connected, and equidimensional of dimension equal to that of $\Gr_{\leq \mu}$. 
\end{cor}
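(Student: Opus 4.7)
The plan is to deduce all three assertions from Theorem \ref{thm2.1} by analyzing the structure morphism $f \colon \overline{\Gr}_{\mathcal{G}, \mu} \to C$. The key initial observation is that $f$ is flat: its image contains $C^\circ$ (since $\restr{\overline{\Gr}_{\mathcal{G}, \mu}}{C^\circ} = \Gr_{\leq \mu} \times C^\circ$ is non-empty), so $f$ is dominant, and a dominant morphism from an integral scheme to a Dedekind scheme is automatically flat.

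Once flatness is in hand, the dimension and Cohen--Macaulay statements fall out of standard flatness results. Since $\overline{\Gr}_{\mathcal{G}, \mu}$ is Cohen--Macaulay by Theorem \ref{thm2.1} and $C$ is regular, flatness forces every fiber of $f$ to be Cohen--Macaulay. Since the source is integral and hence equidimensional, flatness of $f$ over the one-dimensional base $C$ forces every fiber to be equidimensional of dimension $\dim \overline{\Gr}_{\mathcal{G}, \mu} - 1$. The generic fiber of $f$ over $C^\circ$ is $\Gr_{\leq \mu} \times C^\circ$, which has dimension $\dim \Gr_{\leq \mu} + 1$, so in particular the fiber $(\overline{\Gr}_{\mathcal{G}, \mu})_0$ is Cohen--Macaulay and equidimensional of dimension $\dim \Gr_{\leq \mu}$.

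For connectedness I would argue via Stein factorization. The map $f$ is proper because $\overline{\Gr}_{\mathcal{G}, \mu}$ is closed in the ind-projective $\Gr_{\mathcal{G}}$ and of finite type, so we may factor $f$ as $\overline{\Gr}_{\mathcal{G}, \mu} \xrightarrow{f'} C' \xrightarrow{g} C$ with $g$ finite and $f'$ having geometrically connected fibers. The generic fiber of $f$ over $C^\circ$ is $\Gr_{\leq \mu} \times C^\circ$, which is geometrically connected because $G$ is split forces $\Gr_{\leq \mu}$ to be geometrically irreducible. Hence $g$ is generically an isomorphism; since $C$ is normal and $g$ is finite birational, $g$ is an isomorphism. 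Consequently every fiber of $f$ is connected, including the fiber at $0$.

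I do not anticipate a substantive obstacle here: everything reduces to well-known facts about flat proper morphisms to a smooth curve, and the only genuinely geometric input beyond Theorem \ref{thm2.1} is the geometric irreducibility of $\Gr_{\leq \mu}$, which is immediate for split $G$. The one point where care is needed is verifying that the dimension computation on the generic fiber really transports to $0$ via flatness, but this is a direct application of the dimension formula for flat morphisms with equidimensional source.
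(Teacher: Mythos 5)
Your proposal is correct and follows essentially the same route as the paper: both hinge on flatness of $\overline{\Gr}_{\mathcal{G},\mu}\rightarrow C$ (integral scheme dominating a smooth curve) together with Theorem \ref{thm2.1}, and then deduce the Cohen--Macaulay, connectedness, and dimension statements from standard results on flat proper morphisms. The only differences are cosmetic choices of references: the paper gets Cohen--Macaulayness of the fiber by slicing with the Cartier divisor $(\overline{\Gr}_{\mathcal{G},\mu})_0$ and cites EGA IV 15.5.4 for connectedness, whereas you use the flat-fiber Cohen--Macaulay criterion and unwind the connectedness via Stein factorization (which, as you note, requires the geometric integrality of $\Gr_{\leq\mu}$ for split $G$).
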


\begin{proof}
Since $\overline{\Gr}_{\mathcal{G}, \mu}$ is Cohen--Macaulay and integral then $(\overline{\Gr}_{\mathcal{G}, \mu})_{0} $ is also Cohen--Macaulay by \cite[OC6G]{stacks-project}. Note that the morphism $\overline{\Gr}_{\mathcal{G}, \mu} \rightarrow C$ is flat by \cite[III 9.7]{HartshorneAG}.  Now because the generic fiber of $\overline{\Gr}_{\mathcal{G}, \mu} \rightarrow C$ is connected, then so is $(\overline{\Gr}_{\mathcal{G}, \mu})_{0} $ by \cite[15.5.4]{EGA4III}. Finally, $(\overline{\Gr}_{\mathcal{G}, \mu})_{0}$ is equidimensional because it is Cohen--Macaulay and connected, and $\dim \,(\overline{\Gr}_{\mathcal{G}, \mu})_{0} = \dim \Gr_{\leq \mu}$ by \cite[III 9.6]{HartshorneAG}.
\end{proof}

The following lemmas (\ref{prodlem}, \ref{prodlem2}, \ref{Fregdescent}) are well known to experts, but because we could not find detailed proofs in the literature we provide them here. These three lemmas are also valid if $p=2$. 

\begin{lem} \label{prodlem}
Let $A$ and $B$ be domains that are strongly $F$-regular $k$-algebras of finite type. Then if $A \otimes_k B$ is a domain it is strongly $F$-regular.
\end{lem}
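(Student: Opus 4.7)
My plan is to verify strong $F$-regularity of $A \otimes_k B$ via the standard Hochster--Huneke criterion: for an $F$-finite domain $R$, it suffices to exhibit a single nonzero $c$ such that $R_c$ is strongly $F$-regular and such that $R \to F^e_* R$, $1 \mapsto F^e_* c$, splits as a map of $R$-modules for some $e \geq 1$. Note that $A \otimes_k B$ is $F$-finite, since it is finitely generated over the perfect field $k$.

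First I would take $c = a \otimes b$, where $a \in A$ and $b \in B$ are nonzero elements chosen so that $A_a$ and $B_b$ are regular; such elements exist because strongly $F$-regular rings have a nonempty regular locus. Then $(A \otimes_k B)_c \cong A_a \otimes_k B_b$. Since $k$ is perfect, the regular finite type $k$-algebras $A_a$ and $B_b$ are smooth over $k$, so their tensor product is smooth over $k$ and hence regular. This disposes of the first condition in the criterion.

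Next I would produce the splitting. Strong $F$-regularity of $A$ applied to $a$ gives $e_1 \geq 1$ and $\phi_A \colon F^{e_1}_* A \to A$ with $\phi_A(F^{e_1}_* a) = 1$; analogously one obtains $e_2 \geq 1$ and $\phi_B \colon F^{e_2}_* B \to B$ with $\phi_B(F^{e_2}_* b) = 1$. After enlarging one of the exponents, using that such splittings persist upon raising $e$, I may assume $e_1 = e_2 = e$. Since $k$ is perfect there is a natural isomorphism $F^e_*(A \otimes_k B) \cong F^e_* A \otimes_k F^e_* B$ of $(A \otimes_k B)$-modules, and then $\phi_A \otimes \phi_B$ provides an $(A \otimes_k B)$-linear map $F^e_*(A \otimes_k B) \to A \otimes_k B$ sending $F^e_*(a \otimes b) \mapsto 1$.

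The point requiring the most care is the identification $F^e_*(A \otimes_k B) \cong F^e_* A \otimes_k F^e_* B$ together with the $(A \otimes_k B)$-linearity of the tensor product $\phi_A \otimes \phi_B$, as well as the regularity of $A_a \otimes_k B_b$. Both depend crucially on $k$ being perfect, so that Frobenius on $k$ is a bijection and so that regularity of finite type $k$-algebras coincides with smoothness over $k$; without perfectness both steps can fail and a very different argument would be needed.
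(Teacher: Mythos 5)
Your argument is correct and is essentially the paper's own proof: the paper likewise localizes at an element $c = a \otimes b$ with $A_a$, $B_b$ smooth (hence $A_a \otimes_k B_b$ smooth and strongly $F$-regular), invokes the Hochster--Huneke criterion \cite[3.3 (a)]{TightClosure}, and builds the splitting along $a \otimes b$ from splittings for $A$ and $B$ using that $k$ is perfect, just written in the $R^{1/q}$ rather than the $F^e_*$ notation. Your extra remark about equalizing the two Frobenius exponents is a standard point the paper leaves implicit, so there is no substantive difference.
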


\begin{proof}
This is proven in \cite[5.2]{ProdSegre} when $A$ and $B$ are graded rings, but the same proof works in general. Let $R = A \otimes_k B$ and let $a \in A$ and $b \in B$ be such that the localizations $A_a$ and $B_b$ are smooth. Then $A_a \otimes_k B_b$ is smooth and hence strongly $F$-regular by \cite[3.1 (c)]{TightClosure}, so by \cite[3.3 (a)]{TightClosure} it suffices to construct a splitting of $R [(a \otimes b)^{1/q}] \subset R^{1/q}$ for some $q = p^e$. Because $k$ is perfect, such a splitting can be constructed from splittings of $A[a^{1/q}] \subset A^{1/q}$ and $B[b^{1/q}] \subset B^{1/q}$, which exist for some common $q$ because $A_a$ and $B_b$ are strongly $F$-regular.
\end{proof}

\begin{lem} \label{prodlem2}
Let $R$ be a domain that is a strongly $F$-regular $k$-algebra of finite type and let $E$ be an $F$-finite field containing $k$. Then if $R \otimes_k E$ is a domain it is strongly $F$-regular.
\end{lem}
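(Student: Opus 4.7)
The plan is to adapt the proof of Lemma \ref{prodlem}, using the criterion \cite[3.3 (a)]{TightClosure}: a reduced $F$-finite ring $S$ is strongly $F$-regular if and only if there exists a nonzero element $c \in S$ such that $S_c$ is strongly $F$-regular and the inclusion $S[c^{1/q}] \hookrightarrow S^{1/q}$ splits as $S$-modules for some $q = p^e$. Set $S = R \otimes_k E$; by hypothesis $S$ is a noetherian domain. I first verify that $S$ is $F$-finite: because $k$ is perfect, taking $p$-th roots commutes with the tensor product, so $S^{1/p} \cong R^{1/p} \otimes_k E^{1/p}$, which is finite over $S$ since $R$ is $F$-finite (being of finite type over the perfect field $k$) and $E$ is $F$-finite by assumption.

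Next, because $R$ is a finite-type domain over the perfect field $k$, its smooth locus is open and dense, so I choose a nonzero $r \in R$ with $R_r$ smooth over $k$. Then $S_r \cong R_r \otimes_k E$ is smooth over $E$ and $F$-finite, hence strongly $F$-regular by \cite[3.1 (c)]{TightClosure}. Taking $c = r$, it remains to construct an $S$-linear splitting of $S[r^{1/q}] \hookrightarrow S^{1/q}$ for some $q = p^e$.

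To do this, I use $k^{1/q} = k$ to identify $S^{1/q} = R^{1/q} \otimes_k E^{1/q}$ and $S[r^{1/q}] = R[r^{1/q}] \otimes_k E$. Strong $F$-regularity of $R$ yields, for some $q$, an $R$-linear splitting $\sigma \colon R^{1/q} \to R[r^{1/q}]$ sending $r^{1/q}$ to $r^{1/q}$. Separately, since $E^{1/q}$ is a finite-dimensional $E$-vector space by $F$-finiteness of $E$, I extend $1$ to an $E$-basis of $E^{1/q}$ to obtain an $E$-linear retraction $\tau \colon E^{1/q} \to E$ with $\tau(1) = 1$. Then $\sigma \otimes_k \tau \colon R^{1/q} \otimes_k E^{1/q} \to R[r^{1/q}] \otimes_k E$ is $R \otimes_k E$-linear and restricts to the identity on $S[r^{1/q}]$, providing the required splitting. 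The main technical point is recognizing the decomposition $S^{1/q} \cong R^{1/q} \otimes_k E^{1/q}$, which uses perfectness of $k$; beyond this the argument is routine and no serious obstacle is expected.
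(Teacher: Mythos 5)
Your proof is correct and follows essentially the same route as the paper: both apply the criterion of \cite[3.3 (a)]{TightClosure} with $c$ chosen from the smooth locus of $R$ (so that $R_c \otimes_k E$ is regular, hence strongly $F$-regular by \cite[3.1 (c)]{TightClosure}), and both build the required splitting of $S[c^{1/q}] \subset S^{1/q}$ as a tensor product of a splitting for $R$ with a splitting of $E \subset E^{1/q}$, using perfectness of $k$ to identify $S^{1/q}$ with $R^{1/q} \otimes_k E^{1/q}$. You merely spell out details (the $F$-finiteness of $R \otimes_k E$ and the explicit basis-extension retraction $E^{1/q} \to E$) that the paper leaves implicit.
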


\begin{proof}
Let $R_E = R \otimes_k E$ and let $c \in R$ be such that the localization $R_c$ is a smooth $k$-algebra. Since $R_c \otimes_k E$ is a smooth $E$-algebra, it is regular. Hence by \cite[3.1 (c)]{TightClosure}, $R_c \otimes_k E$ is strongly $F$-regular. Thus, to prove $R_E$ is strongly $F$-regular, by \cite[3.3 (a)]{TightClosure} it suffices to show that the inclusion $R_E[c^{1/q} \otimes 1] \subset R_E^{1/q}$ splits for some $q = p^e$. Because $k$ is perfect, then such a splitting can be constructed from splittings of $R[c^{1/q}] \subset R^{1/q}$ and $E \subset E^{1/q}$. A splitting of  $R[c^{1/q}] \subset R^{1/q}$ exists for some $q$ because $R$ is strongly $F$-regular, and a splitting of  $E \subset E^{1/q}$ exists because $E$ is $F$-finite and it is a field.
\end{proof}

\begin{lem} \label{Fregdescent}
Let $f \colon Y \rightarrow X$ be a smooth surjective morphism between reduced $k$-schemes of finite type. Then $X$ is strongly $F$-regular if and only if $Y$ is strongly $F$-regular. 
\end{lem}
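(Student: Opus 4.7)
The plan is to reduce the claim to a local statement about the ring homomorphism $\mathcal{O}_{X,x} \to \mathcal{O}_{Y,y}$ at points $y \in Y$ with image $x = f(y)$, then apply ascent and descent of strong $F$-regularity along smooth homomorphisms. By \cite[3.1 (a)]{TightClosure}, strong $F$-regularity is a local property on the spectrum, and since $f$ is surjective, the equivalence at all such local rings implies the global equivalence. All local rings involved are $F$-finite since $k$ is perfect and $X$, $Y$ are of finite type over $k$.

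For the ascent direction (from $X$ to $Y$), I would exploit that a smooth morphism factors \'{e}tale-locally as a composition of an \'{e}tale map with a projection from affine space. Strong $F$-regularity is preserved under polynomial extensions: given a splitting $\phi \colon A^{1/q} \to A[a^{1/q}]$ of the inclusion, one uses the $A[t_1, \ldots, t_d]$-module decomposition $A[t_1, \ldots, t_d]^{1/q} = \bigoplus_{0 \leq i_1, \ldots, i_d < q} t_1^{i_1/q} \cdots t_d^{i_d/q} \cdot A^{1/q}[t_1, \ldots, t_d]$ to build a splitting by applying $\phi$ coefficient-wise on the $(0, \ldots, 0)$-summand and sending the remaining summands to zero. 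Strong $F$-regularity is likewise preserved under \'{e}tale extensions, since for \'{e}tale $A \to B$ the relative Frobenius is an isomorphism, giving $B^{1/q} \cong B \otimes_A A^{1/q}$, so splittings transfer by base change.

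For the descent direction (from $Y$ to $X$), the induced homomorphism $\mathcal{O}_{X,x} \to \mathcal{O}_{Y,y}$ is faithfully flat (being local and flat) with geometrically regular fiber (being smooth). Strong $F$-regularity descends along $F$-finite faithfully flat homomorphisms with geometrically regular fibers by a theorem of Hochster--Huneke, yielding the claim. The main obstacle is this descent direction, which requires the nontrivial faithfully flat base change theorem; ascent, by contrast, amounts to direct manipulation of Frobenius splittings.
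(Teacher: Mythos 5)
Your argument is correct, but it takes a different route from the paper for the harder half. The paper disposes of both directions by citation: descent ($Y \Rightarrow X$) is \cite[3.1 (b)]{TightClosure} applied to the faithfully flat map $f$, and ascent ($X \Rightarrow Y$) is Aberbach's theorem \cite[3.6]{Aberbach2001} on flat local extensions, whose hypotheses (Gorenstein, $F$-rational closed fiber) are verified by noting that the fibers of a smooth morphism are regular. You agree with the paper on descent (your extra hypothesis of geometrically regular fibers is superfluous there --- faithfully flat, or even pure, suffices, which is exactly \cite[3.1 (b)]{TightClosure}), but for ascent you avoid Aberbach entirely: you use the Zariski-local factorization of a smooth morphism as an \'{e}tale map followed by the projection $\mathbb{A}^d_U \rightarrow U$, prove stability of strong $F$-regularity under polynomial extension by the explicit splitting on the free decomposition $A[t_1,\ldots,t_d]^{1/q} = \bigoplus t_1^{i_1/q}\cdots t_d^{i_d/q} A^{1/q}[t_1,\ldots,t_d]$ (combined, implicitly, with the criterion \cite[3.3 (a)]{TightClosure} for an element $a$ with $A_a$ regular, exactly as in the paper's Lemma \ref{prodlem}), and under \'{e}tale extension via $B^{1/q} \cong B \otimes_A A^{1/q}$. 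This buys a self-contained, elementary proof in the style of Lemmas \ref{prodlem}--\ref{prodlem2}, and it does not need domain hypotheses, only reducedness; what the paper's citation buys is brevity and a statement (Aberbach's) valid for general flat local maps with nice fibers, not just those of geometric origin. One remark: your closing assessment is backwards relative to the literature --- descent along faithfully flat maps is the easy, formal direction, while ascent is where the genuinely nontrivial input (Aberbach, or Hochster--Huneke's smooth base change) normally enters; your \'{e}tale-local argument is precisely what lets you sidestep that input, so the observation does not affect the validity of the proof.
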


\begin{proof}
If $Y$ is strongly $F$-regular then $X$ is strongly $F$-regular by \cite[3.1 (b)]{TightClosure}. Conversely, if $X$ is strongly $F$-regular then $Y$ is strongly $F$-regular by \cite[3.6]{Aberbach2001}. In order to apply \cite[3.6]{Aberbach2001} we are using the fact that the local rings of a smooth scheme over a field are regular, and hence they are Gorenstein and $F$-rational by \cite[3.4]{Frationalloc}. 
\end{proof}

We can now prove that the global Schubert varieties are strongly $F$-regular. 

\begin{thm} \label{BDprops}
The schemes $\overline{\Gr}_{\underline{G}, \mu}$ and $\overline{\Gr}_{\mathcal{G}, \mu}$ are strongly $F$-regular, $F$-rational, and have pseudo-rational singularities.  
\end{thm}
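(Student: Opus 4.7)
For $\overline{\Gr}_{\underline{G}, \mu}$, which is isomorphic to $\Gr_{\leq \mu} \times C$, the argument is immediate: I cover by products of affine opens $\Spec A \times \Spec B$, where $A$ is strongly $F$-regular by Theorem \ref{globFreg} and $B$ is a localization of $k[t]$ (hence regular, and strongly $F$-regular). Because $G$ is split, $\Gr_{\leq \mu}$ is geometrically integral, so the tensor products $A \otimes_k B$ are domains, and Lemma \ref{prodlem} yields strong $F$-regularity on the cover.

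The substantive work is with $\overline{\Gr}_{\mathcal{G}, \mu}$. By Theorem \ref{thm2.1} this is an integral, normal, Cohen--Macaulay scheme. Since strong $F$-regularity is local, I would fix an affine open $\Spec R \subset \overline{\Gr}_{\mathcal{G}, \mu}$, and let $t \in R$ denote the pullback of the coordinate on $C$. Then $R_t$ is the coordinate ring of an affine open of the generic fiber $\Gr_{\leq \mu} \times C^{\circ}$, hence strongly $F$-regular by the first step. Thus, by the criterion underlying \cite[3.3 (a)]{TightClosure} (exactly as applied in the proof of Lemma \ref{prodlem}), it suffices to exhibit, for some $q = p^e$, an $R$-linear splitting of the natural inclusion $R[t^{1/q}] \subseteq R^{1/q}$.

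Producing such a splitting is the main obstacle, and the heart of the proof. For this I would invoke the Frobenius splittings of $\overline{\Gr}_{\mathcal{G}, \mu}$ constructed by Zhu in \cite{ZhuCoherence}, which compatibly split the reduced special fiber $\mathcal{A}(\mu) = V(t)$ (using also Theorem \ref{ZhuFiber}). Combined with the splittings of the individual local affine Schubert varieties $\mathcal{F}\ell_w$ underlying the global $F$-regularity statement of Theorem \ref{globFreg} from \cite{modpGr}, these restrict and rescale to give the required splitting on $\Spec R$. The delicate point is to arrange that the resulting splitting vanishes to exactly first order along the special fiber, so that $t^{1/q}$ is sent to the unit $1 \in R$ rather than to a higher power of $t$; this is where the interplay between Zhu's global construction and the strong $F$-regularity of the stratum pieces is essential.

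Once strong $F$-regularity has been established for both schemes, $F$-rationality and pseudo-rational singularities follow at once from the chain of implications displayed at the end of Section \ref{S2.1}. Since $\overline{\Gr}_{\underline{G}, \mu}$ and $\overline{\Gr}_{\mathcal{G}, \mu}$ are Cohen--Macaulay by Theorem \ref{thm2.1}, the criterion \cite[4.2 (e)]{Frationalloc} transfers $F$-rationality to every local ring, so pseudo-rationality follows in the sense of both \cite{SmithFrat} and \cite{ratsing}, completing the proof.
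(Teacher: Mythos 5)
Your overall skeleton matches the paper's proof: reduce everything to strong $F$-regularity via the chain of implications, handle $\overline{\Gr}_{\underline{G},\mu}\cong \Gr_{\leq\mu}\times C$ with Theorem \ref{globFreg}, Theorem \ref{thm2.1} and Lemma \ref{prodlem}, and for $\overline{\Gr}_{\mathcal{G},\mu}$ combine the Hochster--Huneke criterion \cite[3.3 (a)]{TightClosure} (the generic fiber being strongly $F$-regular) with Zhu's Frobenius splitting of $\overline{\Gr}_{\mathcal{G},\mu}$ compatible with its special fiber \cite[6.5]{ZhuCoherence}. But the step you yourself flag as ``the main obstacle'' is exactly where your plan stops being a proof, and the mechanism you sketch for it is off target. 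You do not need to import the splittings of the local Schubert varieties $\mathcal{F}\ell_w$, nor to ``rescale'' anything, nor to arrange that the splitting vanishes ``to exactly first order'' along the special fiber. The missing ingredient is the general fact that a Frobenius splitting compatible with an effective Cartier divisor $D$ yields a Frobenius splitting \emph{along} $D$ (this is how the paper argues, citing \cite[5.7]{modpGr}): if $\sigma\colon F_*\mathcal{O}\to\mathcal{O}$ is a splitting with $\sigma(F_*I)\subseteq I$ for the invertible ideal $I=(t)$ of $(\overline{\Gr}_{\mathcal{G},\mu})_0$ (invertible because the total space is integral and dominates $C$, and the fiber is reduced by Theorem \ref{ZhuFiber}), then twisting $\sigma|_{F_*I}\colon F_*I\to I$ by $I^{-1}$ produces an $R$-linear map $R^{1/p}\to R$ sending $(t^{p-1})^{1/p}\mapsto 1$.

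This also dissolves your worry about first-order vanishing: the criterion \cite[3.3 (a)]{TightClosure} only asks for a splitting of $1\mapsto c^{1/q}$ for \emph{some} element $c$ whose localization is strongly $F$-regular, and one is free to take $c=t^{p-1}$ (or any power of $t$), since $R_{t^m}=R_t$ is the strongly $F$-regular generic-fiber chart regardless of $m$. So the ``delicate interplay'' you invoke is unnecessary, and as written your plan has a genuine gap at the heart of the argument; it is, however, repairable by inserting the compatible-split-implies-split-along-a-Cartier-divisor lemma, after which your proof coincides with the paper's.
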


\begin{proof}
By the implications following Theorem \ref{globFreg} it suffices to prove these schemes are strongly $F$-regular. By \cite[1.4]{modpGr}, $\Gr_{\leq \mu}$ is globally $F$-regular and hence also strongly $F$-regular. As $C$ is smooth then it is strongly $F$-regular by \cite[3.1 (c)]{TightClosure}. Thus since $\overline{\Gr}_{\underline{G}, \mu} \cong \Gr_{\leq \mu} \times C$ then $\overline{\Gr}_{\underline{G}, \mu}$ is strongly $F$-regular by Theorem \ref{thm2.1} and Lemma \ref{prodlem}. 

Since $\restr{\Gr_{\mathcal{G}}}{C^{\circ}} \cong \restr{\Gr_{\underline{G}}}{C^{\circ}}$ then $\restr{\Gr_{\mathcal{G}}}{C^{\circ}}$ is strongly $F$-regular. Hence by \cite[3.3 (a)]{TightClosure}, to prove $\overline{\Gr}_{\mathcal{G}, \mu}$ is strongly $F$-regular it suffices to prove $\overline{\Gr}_{\mathcal{G}, \mu}$ is Frobenius split along the effective Cartier divisor $(\overline{\Gr}_{\mathcal{G}, \mu})_{0}$. As $\overline{\Gr}_{\mathcal{G}, \mu}$ is Frobenius split compatibly with $(\overline{\Gr}_{\mathcal{G}, \mu})_{0}$ by \cite[6.5]{ZhuCoherence} then it is also Frobenius split along $(\overline{\Gr}_{\mathcal{G}, \mu})_{0}$ (see, for example, \cite[5.7]{modpGr}). 
\end{proof}

\begin{rmrk} We expect that by essentially the same proof, Theorem \ref{BDprops} is true more generally for any parahoric subgroup of a tamely
ramified connected reductive group $G$ over $k(\!(t)\!)$ such that $p > 2$ and $G_{\text{der}} $ is absolutely almost simple and simply connected. This is because the necessary inputs from \cite{PappasRapoport} and \cite{ZhuCoherence} are proved under these more general assumptions.
\end{rmrk}

Before proceeding we prove Theorem \ref{thm2}. Recall that in the setup of Theorem \ref{thm2}, $E$ is a local field of characteristic $p > 2$ with ring of integers $\mathcal{O}_E$ and residue field $\mathbb{F}_q$. The group $G$ is a split connected reductive group defined over $E$ such that $G_{\text{der}}$ is absolutely almost simple and simply connected. We assume that $G$ is the base extension of a split Chevalley group over $\mathbb{Z}$ and that $T$ and $B$ are defined over $\mathbb{F}_q$. Let $\Gr$ be the affine Grassmannian of $G$ viewed as a group scheme over $\mathbb{F}_q$. After choosing an isomorphism $\mathbb{F}_q  [\![ t ]\!] \cong \mathcal{O}_E$ we can view $M_{\mu}$ as a projective $\mathcal{O}_E$-scheme.

\begin{myproof1}[{Proof of Theorem \ref{thm2}}] Let $x \in M_{\mu}$ and let $\mathcal{O}_x$ be the local ring at $x$. As $\mathcal{O}_E$ is an excellent ring and $M_{\mu}$ is a projective $\mathcal{O}_E$-scheme then $\mathcal{O}_x$ is excellent. Hence by \cite[3.1]{SmithFrat} pseudo-rationality will follow if we prove that $\mathcal{O}_x$ is $F$-rational. The residue field of $\mathcal{O}_x$ is $F$-finite because it is finitely generated as a field over the perfect field $k$. Thus $\mathcal{O}_x$ is $F$-finite by \cite[2.6]{Kunz1976}. By \cite[3.1 (d)]{TightClosure} it suffices to show $\mathcal{O}_x$ is strongly $F$-regular.

First suppose $x \in M_{\mu}$ lies in the closed fiber of $M_\mu \rightarrow \Spec(\mathcal{O}_E)$. The completion $\hat{\mathcal{O}}_x$ is excellent and has an $F$-finite residue field, so it is $F$-finite by \cite[2.6]{Kunz1976}. To prove $\mathcal{O}_x$ is strongly $F$-regular it suffices to prove $\hat{\mathcal{O}}_x$ is strongly $F$-regular by \cite[3.1 (b)]{TightClosure}. The ring $\hat{\mathcal{O}}_x$ is isomorphic to the completion of a local ring in $\overline{\Gr}_{\mathcal{G}, \mu}$. Thus, it suffices to take $y \in \overline{\Gr}_{\mathcal{G}, \mu}$ and show that the completion $\hat{\mathcal{O}}_y$ of the local ring $\mathcal{O}_y$ is strongly $F$-regular. Note that the map $\mathcal{O}_y \rightarrow \hat{\mathcal{O}}_y$ is a flat map between $F$-finite noetherian local rings. Since $\mathcal{O}_y$ is reduced and excellent then $\hat{\mathcal{O}}_y$ is also reduced. Moreover, the fibers of this map are regular by \cite[7.8.3 (v)]{EGA4II}. Thus, since $\mathcal{O}_y$ is strongly $F$-regular (Theorem \ref{BDprops}) then $\hat{\mathcal{O}}_y$ is strongly $F$-regular by \cite[3.6]{Aberbach2001}. This shows that $\mathcal{O}_x$ is strongly $F$-regular if $x$ lies in the closed fiber of $M_\mu \rightarrow \Spec(\mathcal{O}_E)$.

To complete the proof we need to show that the generic fiber $M_{\mu} \times_{\Spec(\mathcal{O}_E)} \Spec(E) = \Gr_{\mathbb{F}_q, \leq \mu} \times_{\Spec(\mathbb{F}_q)} \Spec(E)$ is strongly $F$-regular. The property of strong $F$-regularity is Zariski local by definition, so it suffices to take an arbitrary open affine $\Spec(R) \subset \Gr_{\mathbb{F}_q, \leq \mu}$ and show that $R \otimes_{\mathbb{F}_q} E$ is strongly $F$-regular. Thus by Lemma \ref{prodlem2} we are reduced to showing that $\Gr_{\mathbb{F}_q, \leq \mu}$ is geometrically integral.

We first show that the orbit $\Gr_{\mathbb{F}_q, \mu}$ is geometrically integral. Let $F/\mathbb{F}_q$ be a field extension and let $G_F = G \times_{\Spec(\mathbb{F}_q)} \Spec(F)$. By definition $\Gr_{F, \mu}$ is the scheme-theoretic image of the orbit map $L^n(G_F) \rightarrow \Gr_{F}$, $g \mapsto g \cdot \mu(t)$ for $n \gg 0$. Since $L^nG$ is geometrically integral and taking scheme theoretic images commutes with flat base change it follows that $ \Gr_{\mathbb{F}_q, \mu} \times_{\Spec(\mathbb{F}_q)} \Spec(F) \cong \Gr_{F, \mu}$ is integral. 

Now assume $F$ is an algebraic closure of $\mathbb{F}_q$. Because the Cartan decompositions of $G(\mathbb{F}_q(\!(t)\!))$ and $G(F(\!(t)\!))$ are both indexed by $X_*(T)^+$, it follows that the reduced subscheme of $\Gr_{\mathbb{F}_q, \leq \mu} \times_{\Spec(\mathbb{F}_q)} \Spec(F)$ is isomorphic to $\Gr_{F, \leq \mu}$. Thus $\Gr_{\mathbb{F}_q, \leq \mu}$ is geometrically irreducible by \cite[038I]{stacks-project}. Finally, since $\mathbb{F}_q$ is perfect then $\Gr_{\mathbb{F}_q, \leq \mu}$ is geometrically reduced and hence also geometrically integral.
\end{myproof1}

We now begin proving parts (i) and (ii) of Theorem \ref{mainthm}. For the rest of this section we assume that $k$ is an algebraically closed field of characteristic $p > 2$ and that $G$ is a connected reductive group over $k$ such that $G_{\text{der}}$ is almost simple and simply connected.

\begin{prop} \label{constprop}
The perverse sheaf $j_{\mathcal{G}, !*} (\IC_\mu \overset{L}{\boxtimes} \, \restr{\mathbb{F}_p[1]}{C^\circ})$ is isomorphic to the shifted constant sheaf $\mathbb{F}_p[\dim \overline{\Gr}_{\mathcal{G}, \mu}]$ supported on $ \overline{\Gr}_{\mathcal{G}, \mu}$.
\end{prop}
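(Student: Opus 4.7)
The plan is to identify both sides with the same simple perverse sheaf on the integral, $F$-rational scheme $\overline{\Gr}_{\mathcal{G},\mu}$, invoking the characterization of $j_{\mathcal{G},!*}$ as the unique simple perverse extension. Restricting to the finite-type piece $\overline{\Gr}_{\mathcal{G},\mu} \subset \Gr_{\mathcal{G}}$, the map $j_{\mathcal{G}}$ becomes the open immersion $j \colon \overline{\Gr}_{\mathcal{G},\mu}^{\circ} \hookrightarrow \overline{\Gr}_{\mathcal{G},\mu}$, where $\overline{\Gr}_{\mathcal{G},\mu}^{\circ} := \overline{\Gr}_{\mathcal{G},\mu} \cap \restr{\Gr_{\mathcal{G}}}{C^{\circ}} \cong \Gr_{\leq \mu} \times C^{\circ}$.

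First I would observe that on the open subset $\overline{\Gr}_{\mathcal{G},\mu}^{\circ}$, the sheaf $\IC_\mu \overset{L}{\boxtimes}\restr{\mathbb{F}_p[1]}{C^\circ}$ is just the shifted constant sheaf $\mathbb{F}_p[\dim \Gr_{\leq \mu} + 1]$, because $\IC_\mu$ is itself a shifted constant sheaf by Theorem \ref{simpleobjects}. Since $\overline{\Gr}_{\mathcal{G},\mu}$ is integral (Theorem \ref{thm2.1}) with generic fiber $\Gr_{\leq \mu} \times C^{\circ}$, we have $\dim \overline{\Gr}_{\mathcal{G},\mu} = \dim \Gr_{\leq \mu} + 1$; so the candidate extension $\mathbb{F}_p[\dim \overline{\Gr}_{\mathcal{G},\mu}]$ restricts on $\overline{\Gr}_{\mathcal{G},\mu}^{\circ}$ to exactly $\IC_\mu \overset{L}{\boxtimes}\restr{\mathbb{F}_p[1]}{C^\circ}$.

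Next, I would use that $\overline{\Gr}_{\mathcal{G},\mu}$ is integral and $F$-rational (Theorem \ref{BDprops}) to conclude via \cite[1.7]{modpGr} that $\mathbb{F}_p[\dim \overline{\Gr}_{\mathcal{G},\mu}]$ is a \emph{simple} perverse sheaf on $\overline{\Gr}_{\mathcal{G},\mu}$. Similarly, $\IC_\mu \overset{L}{\boxtimes}\restr{\mathbb{F}_p[1]}{C^\circ}$ is simple perverse on $\overline{\Gr}_{\mathcal{G},\mu}^{\circ}$ (again by \cite[1.7]{modpGr}, or because it is an exterior product of simples on smooth factors). Since the intermediate extension is characterized as the unique simple perverse extension of a simple perverse sheaf on an open dense subscheme, the simple perverse sheaf $\mathbb{F}_p[\dim \overline{\Gr}_{\mathcal{G},\mu}]$ must coincide with $j_{!*}(\IC_\mu \overset{L}{\boxtimes}\restr{\mathbb{F}_p[1]}{C^\circ})$.

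I do not anticipate a genuine obstacle here: the proof is essentially a bookkeeping exercise assembling the inputs (integrality, $F$-rationality, and the simplicity criterion from \cite[1.7]{modpGr}). The only subtle point worth flagging is the passage between $\Gr_{\mathcal{G}}$ and the ind-finite-type piece $\overline{\Gr}_{\mathcal{G},\mu}$, but this is handled by the fact that pushforward along a closed immersion commutes with the intermediate extension (as already invoked in Lemma \ref{nearbyind}), so one may identify $j_{\mathcal{G},!*}$ on $\overline{\Gr}_{\mathcal{G},\mu}^{\circ}$ with the usual $j_{!*}$ on the finite-type closure.
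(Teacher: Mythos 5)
Your proposal is correct and follows essentially the same route as the paper: both use integrality plus $F$-rationality of $\overline{\Gr}_{\mathcal{G},\mu}$ (and of its restriction over $C^\circ$) together with \cite[1.7]{modpGr} to see that the shifted constant sheaves involved are simple perverse, and then identify the constant sheaf with the intermediate extension by the uniqueness of a simple perverse extension (the paper phrases this via \cite[2.9]{modpGr}, that $j_{!*}$ of a simple object is simple). The dimension bookkeeping and the reduction to the finite-type closure that you flag are exactly the implicit steps in the paper's argument.
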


\begin{proof}
Since $ \overline{\Gr}_{\mathcal{G}, \mu}$ is integral and $F$-rational then $\mathbb{F}_p[\dim \overline{\Gr}_{\mathcal{G}, \mu}]$ is a simple perverse sheaf by \cite[1.7]{modpGr}. Now the lemma follows by applying \emph{loc. cit.} on $ \restr{\overline{\Gr}_{\mathcal{G}, \mu}}{C^\circ}$ and using the fact that $j_{\mathcal{G}, !*} (\IC_\mu \overset{L}{\boxtimes} \, \restr{\mathbb{F}_p[1]}{C^\circ})$ is simple by \cite[2.9]{modpGr}.
\end{proof}

\begin{prop}
The functor $\mathcal{Z}$ is exact and preserves perversity, and $\mathcal{Z}(\IC_\mu) \cong \mathcal{Z}_{\mathcal{A}(\mu)}$.
\end{prop}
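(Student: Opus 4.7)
The plan is to reduce to computing $\mathcal{Z}$ on the simple objects $\IC_\mu$, then invoke Proposition \ref{constprop} together with Theorem \ref{ZhuFiber} to identify the output explicitly. Unwinding the definition gives $\mathcal{Z}(\IC_\mu) = Ri_{\mathcal{G}}^*\bigl(j_{\mathcal{G},!*}(\IC_\mu \overset{L}{\boxtimes} \restr{\mathbb{F}_p[1]}{C^\circ})\bigr)[-1]$. Proposition \ref{constprop} identifies the intermediate extension with the shifted constant sheaf $\mathbb{F}_p[\dim \overline{\Gr}_{\mathcal{G}, \mu}]$ on $\overline{\Gr}_{\mathcal{G}, \mu}$. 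Since $G_{\text{der}}$ is simply connected, the hypothesis $p \nmid |\pi_1(G_{\text{der}})|$ of Theorem \ref{ZhuFiber} is vacuous, so $(\overline{\Gr}_{\mathcal{G}, \mu})_0$ is already reduced and equals $\mathcal{A}(\mu)$ as a scheme; pulling back the constant sheaf along $i_{\mathcal{G}}$ therefore gives the constant sheaf on $\mathcal{A}(\mu)$. Using $\dim \overline{\Gr}_{\mathcal{G}, \mu} = \dim \Gr_{\leq \mu} + 1 = \dim \mathcal{A}(\mu) + 1$ from Corollary \ref{fibercor} and applying the overall shift by $-1$, one arrives at $\mathbb{F}_p[\dim \mathcal{A}(\mu)] = \mathcal{Z}_{\mathcal{A}(\mu)}$, which yields the isomorphism in the statement.

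The main obstacle is then verifying that $\mathcal{Z}_{\mathcal{A}(\mu)}$ is genuinely perverse on $\mathcal{F}\ell$. Because the nearby cycles always land in ${}^pD^{\leq 0}$, only the lower perverse bound is at issue --- and unlike in the $\ell$-adic case, there is no Verdier duality available to derive it from the upper bound. I would exploit the structure of $\mathcal{A}(\mu)$: its top-dimensional irreducible components are the Schubert varieties $\mathcal{F}\ell_{t_\lambda}$ for $\lambda \in \Lambda_\mu$, each integral and $F$-rational by Theorem \ref{globFreg}, so by \cite[1.7]{modpGr} the shifted constant sheaf on each is a simple perverse sheaf. Combining this with the equidimensional Cohen--Macaulay structure of $\mathcal{A}(\mu)$ inherited via Corollary \ref{fibercor}, and a stratification argument along the lower-length $\mathcal{F}\ell_w$'s for $w \in \text{Adm}(\mu)$ (which are also $F$-rational), should give the required lower bound. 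The $I$-equivariance of $\mathcal{Z}_{\mathcal{A}(\mu)}$ is automatic since $\mathcal{A}(\mu)$ is a union of $I$-orbit closures and the sheaf is constant.

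With perversity of the $\mathcal{Z}(\IC_\mu)$ in hand, the remaining claims are formal. Every object of $P_{L^+G}(\Gr, \mathbb{F}_p)^{\sss}$ is a finite direct sum of the $\IC_\mu$ by Theorem \ref{simpleobjects}, and additivity of $\mathcal{Z}$ is immediate from the definitions of $\Psi$ and the external product, so $\mathcal{Z}$ takes values in $P_I(\mathcal{F}\ell, \mathbb{F}_p)$. Since every short exact sequence in the semisimple category $P_{L^+G}(\Gr, \mathbb{F}_p)^{\sss}$ splits, any additive functor out of it is automatically exact, completing the proof.
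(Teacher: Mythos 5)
Your identification of $\mathcal{Z}(\IC_\mu)$ with $\mathcal{Z}_{\mathcal{A}(\mu)}$ follows the same route as the paper (Proposition \ref{constprop} plus Theorem \ref{ZhuFiber}, together with $\dim \mathcal{A}(\mu) = \dim \Gr_{\leq \mu}$), and your final reduction is also the paper's: semisimplicity of $P_{L^+G}(\Gr, \mathbb{F}_p)^{\sss}$ makes any additive functor out of it exact. The gap is in the middle step, the perversity of $\mathcal{Z}_{\mathcal{A}(\mu)}$. You correctly isolate the difficulty (only the lower perverse bound is in question, and there is no Verdier duality to deduce it), but what you offer is a sketch: ``a stratification argument along the lower-length $\mathcal{F}\ell_w$'s \dots should give the required lower bound'' is not an argument, and the ingredients you propose to feed it are not the right ones. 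Knowing that each component $\mathcal{F}\ell_{t_\lambda}$, $\lambda \in \Lambda_\mu$, is integral and $F$-rational, so that its own shifted constant sheaf is simple perverse, says essentially nothing about the constant sheaf on the union: $\mathcal{A}(\mu)$ is reducible, hence neither normal nor $F$-rational, and the costalk-type lower bound for a constant sheaf is controlled by local (coherent, via Artin--Schreier) cohomology at points where several components meet --- precisely where a naive componentwise or stratumwise argument breaks down (compare two planes in $\mathbb{A}^4$ meeting in a point: each component is smooth, yet the shifted constant sheaf on the union is not perverse).

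The decisive input is the one you mention only in passing: $\mathcal{A}(\mu) \cong (\overline{\Gr}_{\mathcal{G},\mu})_{0}$ is Cohen--Macaulay and equidimensional by Corollary \ref{fibercor}, and by \cite[1.6]{modpGr} a shifted constant sheaf on an equidimensional Cohen--Macaulay scheme is perverse. This is exactly how the paper concludes; no stratification and no $F$-rationality of the individual components is needed at this step ($F$-rationality of $\overline{\Gr}_{\mathcal{G},\mu}$ has already done its work in Proposition \ref{constprop}, which you invoke correctly). With that citation in place of your sketched argument the proof is complete. (The $I$-equivariance you discuss is not part of this statement; the paper proves it separately in Proposition \ref{equivprop}, and it requires slightly more than ``the sheaf is constant on a union of orbit closures.'')
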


\begin{proof}
The isomorphism $\mathcal{Z}(\IC_\mu) \cong \mathcal{Z}_{\mathcal{A}(\mu)}$ follows from Theorem \ref{ZhuFiber} and Proposition \ref{constprop}, and the fact that $\dim \mathcal{A}(\mu) = \dim \Gr_{\leq \mu}$. As $\mathcal{A}(\mu)$ is Cohen--Macaulay and equidimensional (Corollary \ref{fibercor}) then $\mathcal{Z}_{\mathcal{A}(\mu)}$ is perverse by \cite[1.6]{modpGr}. Since $P_{L^+G}(\Gr, \mathbb{F}_p)^{\sss}$ is semisimple then $\mathcal{Z}$ is also exact and it preserves perversity.
\end{proof}

\begin{prop} \label{equivprop}
$\mathcal{Z}(\mathcal{F}^\bullet)$ is $I$-equivariant. 
\end{prop}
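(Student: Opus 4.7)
The plan is to upgrade the equivariance of $\mathcal{F}^\bullet \overset{L}{\boxtimes} \restr{\mathbb{F}_p[1]}{C^\circ}$ on the generic fiber $\restr{\Gr_{\mathcal{G}}}{C^\circ}$ to an $\mathcal{L}^+\mathcal{G}$-equivariant structure on the full intermediate extension, and then to restrict to the special fiber to obtain $I$-equivariance. Under the isomorphisms (\ref{isochoice2}) and (\ref{isochoice3}), the $\restr{\mathcal{L}^+\mathcal{G}}{C^\circ}$-action on $\restr{\Gr_{\mathcal{G}}}{C^\circ}$ corresponds to the $L^+G \times C^\circ$-action on $\Gr \times C^\circ$ through the first factor, so $\mathcal{F}^\bullet \overset{L}{\boxtimes} \restr{\mathbb{F}_p[1]}{C^\circ}$ is canonically $\restr{\mathcal{L}^+\mathcal{G}}{C^\circ}$-equivariant.

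Fix $\mu \in X_*(T)^+$ so that $\mathcal{F}^\bullet$ is supported on $\Gr_{\leq \mu}$, and fix $n$ large enough that the $\mathcal{L}^+\mathcal{G}$-action on $\overline{\Gr}_{\mathcal{G}, \mu}$ factors through the smooth $C$-group scheme $\mathcal{L}^+_n\mathcal{G}$. Let
$$a, p \colon \mathcal{L}^+_n\mathcal{G} \times_C \overline{\Gr}_{\mathcal{G}, \mu} \longrightarrow \overline{\Gr}_{\mathcal{G}, \mu}$$
denote the action and projection morphisms; both are smooth of the same relative dimension $d$ because $\mathcal{L}^+_n\mathcal{G} \to C$ is smooth. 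Set $\mathcal{I} := j_{\mathcal{G}, !*}(\mathcal{F}^\bullet \overset{L}{\boxtimes} \restr{\mathbb{F}_p[1]}{C^\circ})$. By \cite[2.16]{modpGr}, intermediate extension commutes with smooth pullback, so both $a^*[d]\, \mathcal{I}$ and $p^*[d]\, \mathcal{I}$ are themselves intermediate extensions from the generic fiber of $\mathcal{L}^+_n\mathcal{G} \times_C \overline{\Gr}_{\mathcal{G}, \mu}$. The $\restr{\mathcal{L}^+\mathcal{G}}{C^\circ}$-equivariance of $\mathcal{F}^\bullet \overset{L}{\boxtimes} \restr{\mathbb{F}_p[1]}{C^\circ}$ supplies an isomorphism between these restrictions on the generic fiber; by the fully faithfulness of the intermediate extension functor, this extends uniquely to a global isomorphism $a^*[d]\, \mathcal{I} \xrightarrow{\sim} p^*[d]\, \mathcal{I}$. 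Running the same argument on $\mathcal{L}^+_n\mathcal{G} \times_C \mathcal{L}^+_n\mathcal{G} \times_C \overline{\Gr}_{\mathcal{G}, \mu}$ confirms the cocycle condition from the corresponding identity on the generic fiber, so $\mathcal{I}$ is $\mathcal{L}^+_n\mathcal{G}$-equivariant.

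It remains to transport this structure to the special fiber. Applying $Ri_{\mathcal{G}}^*[-1]$ and using smooth base change along the Cartesian squares obtained by restricting the action and projection above the point $0 \in C$ produces an $(\mathcal{L}^+_n\mathcal{G})_0$-equivariant structure on $\mathcal{Z}(\mathcal{F}^\bullet)$. Since $(\mathcal{L}^+_n\mathcal{G})_0 \cong L^n(\restr{\mathcal{G}}{\hat{\mathcal{O}}_0})$ is a finite-type quotient of $I$ with pro-unipotent, hence connected, kernel, and since the $I$-action on $\mathcal{F}\ell$ factors through this quotient, such a structure is precisely an $I$-equivariant structure on $\mathcal{Z}(\mathcal{F}^\bullet)$ in the sense of \cite[\S 6]{modpGr}.

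The main technical hurdle is extending the generic-fiber equivariance isomorphism and cocycle identity across the intermediate extension $j_{\mathcal{G},!*}$. This reduces cleanly to the fact that $j_{\mathcal{G},!*}$ commutes with pullback along the smooth morphisms $a$ and $p$, which in turn rests on \cite[2.16]{modpGr}; once that is granted, the remaining verifications are bookkeeping and require no new geometric input (in particular, no $F$-singularity hypotheses beyond those already guaranteeing that $\mathcal{I}$ is well-defined as a simple perverse sheaf).
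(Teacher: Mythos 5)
Your proposal is correct and follows essentially the same route as the paper: use the $\restr{\mathcal{L}^+\mathcal{G}}{C^\circ}$-equivariance of $\mathcal{F}^\bullet \overset{L}{\boxtimes} \restr{\mathbb{F}_p[1]}{C^\circ}$ on the generic fiber, invoke the compatibility of intermediate extension with smooth pullback \cite[2.16]{modpGr} to promote it to an $\mathcal{L}^+\mathcal{G}$-equivariant structure on $j_{\mathcal{G},!*}(\mathcal{F}^\bullet \overset{L}{\boxtimes} \restr{\mathbb{F}_p[1]}{C^\circ})$, and then restrict to the fiber over $0$. The paper states this more briefly; your added bookkeeping (factoring through $\mathcal{L}^+_n\mathcal{G}$, the cocycle condition, connectedness of the kernel of $I \to (\mathcal{L}^+_n\mathcal{G})_0$) is exactly the verification the paper leaves implicit.
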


\begin{proof}
Note that $\mathcal{F}^\bullet \overset{L}{\boxtimes} \restr{\mathbb{F}_p[1]}{C^\circ}$ is equivariant for the action of $\restr{\mathcal{L}^+ \mathcal{G}}{C^\circ} = L^+G \times C^\circ$ on $\restr{\overline{\Gr}_{\mathcal{G}, \mu}}{C^\circ} = \Gr_{\leq \mu} \times C^\circ$. Using the fact that taking intermediate extensions commutes with smooth pullback (\cite[2.16]{modpGr}) one can check that the perverse sheaf $j_{\mathcal{G}, !*}(\mathcal{F}^\bullet \overset{L}{\boxtimes} \restr{\mathbb{F}_p[1]}{C^\circ}) \in P_c^b(\overline{\Gr}_{\mathcal{G}, {\mu}}, \mathbb{F}_p)$ is $\mathcal{L}^+ \mathcal{G}$-equivariant. By taking the fiber over $0$ it follows that $\mathcal{Z}(\mathcal{F}^\bullet)$ is $I$-equivariant. 
\end{proof}

This completes the proof of the properties of $\mathcal{Z}$ asserted in the beginning of Theorem \ref{mainthm} and also part (i).

\begin{myproof1}[{Proof of Theorem \ref{mainthm} (ii)}]
We first show that $R\pi_! (\mathcal{Z}_{\mathcal{A}(\mu)}) \cong \IC_\mu$. Since $\overline{\Gr}_{\mathcal{G}, \mu}$ and $\overline{\Gr}_{\underline{G}, \mu}$ are normal, Cohen--Macaulay, and have pseudo-rational singularities then $R\pi_{\mathcal{G},*}(\mathcal{O}_{\overline{\Gr}_{\mathcal{G}, \mu}}) \cong \mathcal{O}_{\overline{\Gr}_{\underline{G}, \mu}}$ by \cite[1.8]{ratsing}. Because $\pi_{\mathcal{G},*} = \pi_{\mathcal{G},!}$ as functors on $\mathbb{F}_p$-sheaves then by applying the Artin--Schreier sequence and Proposition \ref{constprop} it follows that $$R\pi_{\mathcal{G},!}(j_{\mathcal{G}, !*}(\IC_\mu \overset{L}{\boxtimes} \: \restr{\mathbb{F}_p[1]}{C^\circ})) \cong \IC_\mu \overset{L}{\boxtimes} \: \mathbb{F}_p[1].$$  Now by semisimplicity, for general $\mathcal{F}^\bullet \in P_{L^+G}(\Gr, \mathbb{F}_p)^{\text{ss}}$ there exists an isomorphism  
$$R\pi_{\mathcal{G},!}(j_{\mathcal{G}, !*}(\mathcal{F}^\bullet \overset{L}{\boxtimes} \restr{\mathbb{F}_p[1]}{C^\circ})) \cong \mathcal{F}^\bullet \overset{L}{\boxtimes} \mathbb{F}_p[1].$$ We can select a canonical isomorphism by requiring that it restricts to the identity map over $C^\circ$. By restricting this canonical isomorphism to $0$ and applying the proper base change theorem we get a canonical isomorphism 
$$R\pi_!(\mathcal{Z}(\mathcal{F}^\bullet)) \cong \mathcal{F}^\bullet.$$
\end{myproof1}

\section{Proofs} \label{S3}

\subsection{Beilinson--Drinfeld and convolution Grassmannians} \label{BDsec}

In this section we establish some $F$-regularity results that we will use to prove the remaining parts of Theorem \ref{mainthm}. Let $k$ be an algebraically closed field of characteristic $p > 2$ and let $G$ be a connected reductive group over $k$ such that $G_{\text{der}}$ is almost simple. We also assume that $G_{\text{der}}$ is simply connected until Remark \ref{assumption}. In our proofs we will use the same geometric objects as in \cite{ZhuCoherence}. First, we have the Beilinson--Drinfeld Grassmannian for $\mathcal{G}$ over $C$ defined by the functor
$$\Gr_{\mathcal{G}}^{\BD}(R) = \left\{(x, \mathcal{E}, \beta) \: : \: x \in C(R), \: \mathcal{E} \text{ is a } \mathcal{G}\text{-torsor on } C_R, \:  \beta \colon \restr{ \mathcal{E}}{C^{\circ}_R - \Gamma_x} \cong \restr{ \mathcal{E}_0}{C^{\circ}_R - \Gamma_x}    \right\}.$$ This is an ind-proper scheme over $C$ by \cite[6.2.1]{ZhuCoherence}. By arguments similar to those in \cite[Prop. 5]{GaitsgoryCentral},
$$\restr{\Gr_{\mathcal{G}}^{\BD}}{C^\circ} \cong \Gr \times C^\circ \times \mathcal{F}\ell, \quad \quad \quad \quad (\Gr_{\mathcal{G}}^{\BD})_{0} \cong \mathcal{F}\ell.$$ Let $\overline{\Gr}_{\mathcal{G}, \mu, w}^{\BD}$ be the reduced closure of $\Gr_{\leq \mu} \times C^\circ \times \mathcal{F}\ell_{w}$ in $\Gr_{\mathcal{G}}^{\BD}$.

The global convolution Grassmannian for $\mathcal{G}$ is the functor
$$\Gr_{\mathcal{G}}^{\conv}(R) = \left\{ (x, \mathcal{E}_1, \mathcal{E}_2, \beta_1, \beta_2) \: : \:     \begin{array}{lr}
       x \in C(R), \: \mathcal{E}_1, \mathcal{E}_2  \text{ are } \mathcal{G}\text{-torsors on } C_R,\\
       \beta_1 \colon \restr{\mathcal{E}_1}{C_R - \Gamma_x} \cong \restr{ \mathcal{E}_0}{C_R - \Gamma_x}, \: \beta_2 \colon \restr{\mathcal{E}_2}{C_R^\circ} \cong \restr{\mathcal{E}_1}{C_R^\circ} \\
     \end{array} \right\}. 
     $$
This is an ind-projective scheme over $C$ by \cite[6.2.3]{ZhuCoherence}. There is a map $m_{\mathcal{G}} \colon \Gr_{\mathcal{G}}^{\conv} \rightarrow \Gr_{\mathcal{G}}^{\BD}$ which sends $(x, \mathcal{E}_1, \mathcal{E}_2, \beta_1, \beta_2)$ to $(x, \mathcal{E}_2, \beta_1 \circ \beta_2)$. The map $m_{\mathcal{G}}$ is an isomorphism over $C^\circ$. By taking the fibers over $C^\circ$ and $0$ we get the following diagram with Cartesian squares.

\begin{equation} \label{diagram2} \xymatrix{
\restr{\Gr_{\mathcal{G}}^{\conv}}{C^{\circ}} \ar[r]^(.55){j^{\conv}} \ar[d]^{\sim} & \Gr_{\mathcal{G}}^{\conv} \ar[d]^{m_{\mathcal{G}}} & LG \times^I \mathcal{F}\ell \ar[l]_{i^{\conv}} \ar[d]^{m} \\
\restr{\Gr_{\mathcal{G}}^{\BD}}{C^\circ} \ar[r]^(.55){j^{\BD}}   & \Gr_{\mathcal{G}}^{\BD} & \mathcal{F}\ell \ar[l]_(.40){i^{\BD}}  }
\end{equation}

We define $\overline{\Gr}_{\mathcal{G}, \mu, w}^{\conv}$ to be the reduced closure of $\Gr_{\leq \mu} \times C^\circ \times \mathcal{F}\ell_{w}$ in $\Gr_{\mathcal{G}}^{\conv}$. In Theorem \ref{BDF} we will show that $\overline{\Gr}_{\mathcal{G}, \mu, w}^{\BD}$ and $\overline{\Gr}_{\mathcal{G}, \mu, w}^{\conv}$  are strongly $F$-regular. Before we can prove this, we need to show that $\overline{\Gr}_{\mathcal{G}, \mu, w}^{\BD}$ is Frobenius split compatibly with $(\overline{\Gr}_{\mathcal{G}, \mu, w}^{\BD})_{0}$. Zhu proved such a splitting exists for $w \in X_*(T)^+$ sufficiently dominant  \cite[6.5]{ZhuCoherence}, and our argument will require only a minor modification of Zhu's argument.

In our proofs we will use the following facts and notation. Recall that for $\lambda \in X_*(T)^+$ we write $t_\lambda$ for the element $\lambda$ viewed as an element of $\tilde{W}$. If we write $\tilde{W} = X_*(T) \rtimes W$, then $$\pi^{-1}(\Gr_{\leq \lambda}) = \mathcal{F}\ell_{t_{\lambda}^{w_0}}, \quad  \text{where}  \quad t_{\lambda}^{w_0} := (\lambda, w_0) \in \tilde{W}.$$ Additionally, for $\mu \in X_*(T)^+$ we have $$t_\mu \cdot t_{\lambda}^{w_0} = (\mu, 1) \cdot (\lambda, w_0) = (\mu+\lambda, w_0) = t_{\mu+\lambda}^{w_0}.$$ In this case, if $\ell$ is the length function on $\tilde{W}$ we have
$$\ell(t_\mu) + \ell(t_\lambda^{w_0}) = \ell(t_{\mu+\lambda}^{w_0}).$$ This can be proved using the formula for $\ell$ in \cite[\S 1.1]{AffineFlagManifolds} (see also \cite[\S 9.1]{ZhuCoherence}). 

\begin{prop} \label{BDfiber}
Let $\mu$, $\lambda \in X_*(T)^+$ and let $w = t_{\lambda}^{w_0} \in \tilde{W}$. Then $\overline{\Gr}_{\mathcal{G}, \mu, w}^{\BD}$ is normal, and the fiber $(\overline{\Gr}_{\mathcal{G}, \mu, w}^{\BD})_{0}$ is reduced and isomorphic to $\mathcal{F}\ell_{t_{\mu +\lambda}^{w_0}}$.
\end{prop}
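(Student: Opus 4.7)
The plan is to compare $\overline{\Gr}_{\mathcal{G}, \mu, w}^{\BD}$ with its counterpart for the constant group scheme: let $\overline{\Gr}_{\underline{G}, \mu, \lambda}^{\BD} \subset \Gr_{\underline{G}}^{\BD}$ denote the reduced closure of $\Gr_{\leq \mu} \times C^{\circ} \times \Gr_{\leq \lambda}$, obtained from the defining data of $\overline{\Gr}_{\mathcal{G}, \mu, w}^{\BD}$ by replacing $\mathcal{G}$ by $\underline{G}$ and $\mathcal{F}\ell_w$ by $\Gr_{\leq \lambda}$. By the classical Beilinson--Drinfeld fusion argument in \cite{ZhuCoherence} (applied to the split reductive group $G$), the scheme $\overline{\Gr}_{\underline{G}, \mu, \lambda}^{\BD}$ is normal and flat over $C$, with reduced special fiber canonically isomorphic to $\Gr_{\leq \mu + \lambda}$.

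The map of group schemes $\mathcal{G} \to \underline{G}$ induces a morphism $\pi^{\BD} \colon \Gr_{\mathcal{G}}^{\BD} \to \Gr_{\underline{G}}^{\BD}$ which over $C^{\circ}$ is $\id \times \id \times \pi$ and over $0$ restricts to $\pi \colon \mathcal{F}\ell \to \Gr$. The first key step will be to verify that $\pi^{\BD}$ is a Zariski-locally trivial, smooth $G/B$-bundle. Away from $0$ this is immediate. Near $0$, since $\mathcal{G}|_{0} \cong B \subset G$, reducing a $\underline{G}$-torsor on a formal neighborhood of $\Gamma_{0}$ to a $\mathcal{G}$-torsor amounts to choosing a $B$-reduction of the associated $G$-torsor at the closed point, with higher-order reductions extending uniquely by smoothness of $B \to G$.

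Granted this smoothness, I will show that $\overline{\Gr}_{\mathcal{G}, \mu, w}^{\BD}$ coincides with the preimage $X := (\pi^{\BD})^{-1}(\overline{\Gr}_{\underline{G}, \mu, \lambda}^{\BD})$. Because $\pi^{\BD}$ is smooth with connected fibers $G/B$ and $\overline{\Gr}_{\underline{G}, \mu, \lambda}^{\BD}$ is integral, $X$ is integral, and its restriction to $C^{\circ}$ equals $\Gr_{\leq \mu} \times C^{\circ} \times \pi^{-1}(\Gr_{\leq \lambda}) = \Gr_{\leq \mu} \times C^{\circ} \times \mathcal{F}\ell_{t_{\lambda}^{w_{0}}}$, using the identity $\pi^{-1}(\Gr_{\leq \lambda}) = \mathcal{F}\ell_{t_{\lambda}^{w_{0}}}$ recalled just before the proposition. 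Since $\overline{\Gr}_{\mathcal{G}, \mu, w}^{\BD}$ is by definition the reduced closure of the same open subscheme, the two must agree.

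Normality of $\overline{\Gr}_{\mathcal{G}, \mu, w}^{\BD}$ then descends from normality of $\overline{\Gr}_{\underline{G}, \mu, \lambda}^{\BD}$ via the smooth morphism $\pi^{\BD}$, and the special fiber is
\[
(\overline{\Gr}_{\mathcal{G}, \mu, w}^{\BD})_{0} = \pi^{-1}((\overline{\Gr}_{\underline{G}, \mu, \lambda}^{\BD})_{0}) = \pi^{-1}(\Gr_{\leq \mu + \lambda}) = \mathcal{F}\ell_{t_{\mu + \lambda}^{w_{0}}},
\]
which is reduced because $\pi$ is smooth and $\Gr_{\leq \mu + \lambda}$ is reduced. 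The main obstacle I anticipate is the rigorous verification that $\pi^{\BD}$ is a smooth $G/B$-bundle across the whole base $C$, since the Bruhat--Tits structure of $\mathcal{G}$ differs from the constant group scheme precisely at $0$; once that is in hand the remaining steps are formal consequences of preservation of normality and reducedness under smooth preimage, together with the length-additivity identity $\ell(t_{\mu}) + \ell(t_{\lambda}^{w_{0}}) = \ell(t_{\mu + \lambda}^{w_{0}})$ that underlies the identification of $\pi^{-1}(\Gr_{\leq \mu + \lambda})$ as the single Schubert variety $\mathcal{F}\ell_{t_{\mu + \lambda}^{w_{0}}}$.
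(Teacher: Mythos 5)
Your proof is correct in outline but follows a genuinely different route from the paper's --- in fact it is essentially the alternative argument sketched in the remark immediately following the proposition, where it is credited to the referee. The paper never establishes that $\pi_{\BD}$ is a $G/B$-fibration over all of $C$: it only uses the inclusion $\overline{\Gr}^{\BD}_{\mathcal{G},\mu,w}\subset\pi_{\BD}^{-1}(\overline{\Gr}^{\BD}_{\underline{G},\mu,\lambda})$, flatness over $C$ together with the length-additivity dimension count to identify the reduced special fiber with $\mathcal{F}\ell_{t_{\mu+\lambda}^{w_0}}$, the $G/B$-fibration structure only over the special fiber (where it is literally $\pi\colon\mathcal{F}\ell\to\Gr$) to get reducedness, and Hironaka's lemma \cite[5.12.8]{EGA4II} for normality. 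Your version concentrates all the work in the single claim that $\pi_{\BD}$ is a smooth $G/B$-fibration over the entire curve; granted that, reducedness and normality follow formally since both are local in the smooth topology, and you even get the scheme-theoretic equality $(\pi_{\BD})^{-1}(\overline{\Gr}^{\BD}_{\underline{G},\mu,\lambda})=\overline{\Gr}^{\BD}_{\mathcal{G},\mu,w}$, slightly stronger than the paper's statement. What the paper's route buys is that it avoids this moduli-theoretic input entirely; what yours buys is a shorter, more conceptual deduction once the fibration claim is proved (the paper indicates this can be done via the description of both Beilinson--Drinfeld Grassmannians as restrictions to $C\times\{0\}$ of quotients of global loop groups over $C^2$). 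Three repairs to your write-up: the fiber of $\mathcal{G}$ at $0$ is not $B$ --- rather $\mathcal{G}$ over $\hat{\mathcal{O}}_{0}$ is the Iwahori group scheme, i.e.\ the dilatation (N\'eron blowup) of $\underline{G}$ in $B$ at $0$, and the correct statement is that a $\mathcal{G}$-torsor is a $\underline{G}$-torsor together with a $B$-reduction along the divisor $\Gamma_0$, with no higher-order data and no appeal to ``smoothness of $B\to G$'' (a closed immersion); you also invoke normality of $\overline{\Gr}^{\BD}_{\underline{G},\mu,\lambda}$, which the paper never needs and which requires its own justification (e.g.\ Hironaka's lemma again, using flatness, the normal generic fiber $\Gr_{\leq\mu}\times\Gr_{\leq\lambda}$ and the reduced normal special fiber $\Gr_{\leq\mu+\lambda}$ from \cite[1.2.4]{ZhuAffine}); finally, normality pulls back (ascends) along the smooth morphism rather than ``descends.''
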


\begin{proof}
Let $\Gr_{\underline{G}}^{\BD}$ be the functor 
$$\Gr_{\underline{G}}^{\BD}(R)= \left\{ (x,\mathcal{E}, \beta) \: : \: x \in C(R), \: \mathcal{E} \text{ is a } \underline{G}\text{-torsor on } C_R, \: \beta \colon \restr{\mathcal{E}}{C^\circ_R - \Gamma_x } \cong \restr{\mathcal{E}_0}{C^\circ_R - \Gamma_x} \right\}.$$ Then by \cite[3.1.1]{GaitsgoryCentral}, $\Gr_{\underline{G}}^{\BD}$ is ind-projective, and there are isomorphisms
$$\restr{\Gr_{\underline{G}}^{\BD}}{C^\circ} \cong \Gr  \times C^\circ \times \Gr, \quad \quad \quad \quad (\Gr_{\underline{G}}^{\BD})_{0}  \cong \Gr.$$ The map $\mathcal{G} \rightarrow \underline{G}$ induces a map $\pi_{\BD} \colon \Gr_{\mathcal{G}}^{\BD} \rightarrow \Gr_{\underline{G}}^{\BD}$. Over closed points in $C^\circ$ this is the map $\id \times \pi  \colon \Gr \times \mathcal{F}\ell \rightarrow \Gr \times \Gr $ and over $0$ this is $\pi \colon \mathcal{F}\ell \rightarrow \Gr$. Let $\overline{\Gr}^{\BD}_{\underline{G}, \mu, \lambda} \subset \Gr_{\underline{G}}^{\BD}$ be the reduced closure of $\Gr_{\leq \mu}  \times C^\circ \times
\Gr_{\leq \lambda}$.

The fiber $(\overline{\Gr}^{\BD}_{\underline{G}, \mu, \lambda})_{0}$ is reduced and isomorphic to $\Gr_{\leq \mu + \lambda}$ by \cite[1.2.4]{ZhuAffine} (see also \cite[3.1.14]{ZhuGra}). As $\overline{\Gr}_{\mathcal{G}, \mu, w}^{\BD} \subset \pi_{\BD}^{-1}(\overline{\Gr}^{\BD}_{\underline{G}, \mu, \lambda})$ then $(\overline{\Gr}_{\mathcal{G}, \mu, w}^{\BD})_{0, \, \text{red}} \subset \mathcal{F}\ell_{t_{\mu+\lambda}^{w_0}}$. Furthermore, $\overline{\Gr}_{\mathcal{G}, \mu, w}^{\BD} \rightarrow C$ is flat by \cite[9.7]{HartshorneAG}, and hence by \cite[9.6]{HartshorneAG} the irreducible components of $(\overline{\Gr}_{\mathcal{G}, \mu, w}^{\BD})_{0, \, \text{red}}$ all have dimension equal to $\dim \Gr_{\leq \mu}  + \dim \mathcal{F}\ell_{t_\lambda^{w_0}}$. Thus, since $\dim \mathcal{F} \ell_{t_{\mu+\lambda}^{w_0}} = \dim \Gr_{\leq \mu}  + \dim \mathcal{F}\ell_{t_\lambda^{w_0}}$, then $(\overline{\Gr}_{\mathcal{G}, \mu, w}^{\BD})_{0, \, \text{red}} = \mathcal{F}\ell_{t_{\mu+\lambda}^{w_0}}$ and $( \pi_{\BD}^{-1}(\overline{\Gr}^{\BD}_{\underline{G}, \mu, \lambda}))_{\text{red}} = \overline{\Gr}_{\mathcal{G}, \mu, w}^{\BD}$. 

As $\mathcal{F}\ell$ is a $G/B$ fibration over $\Gr$, then $( \pi_{\BD}^{-1}(\overline{\Gr}^{\BD}_{\underline{G}, \mu, \lambda}))_{0}$ is a $G/B$ fibration over $(\overline{\Gr}^{\BD}_{\underline{G}, \mu, \lambda})_{0}$. Since $(\overline{\Gr}_{\underline{G}, \mu, \lambda}^{\BD})_{0}$ is reduced, then so is $( \pi_{\BD}^{-1}(\overline{\Gr}^{\BD}_{\underline{G}, \mu, \lambda}))_{0}$. As $(\overline{\Gr}_{\mathcal{G}, \mu, w}^{\BD})_{0}$ is a closed subscheme of $( \pi_{\BD}^{-1}(\overline{\Gr}^{\BD}_{\underline{G}, \mu, \lambda}))_{0}$, and these two schemes have the same reductions, then  $(\overline{\Gr}_{\mathcal{G}, \mu, w}^{\BD})_{0}$ is reduced. Finally, since $\mathcal{F}\ell_{t_{\mu+\lambda}^{w_0}}$ is normal and $\restr{\overline{\Gr}_{\mathcal{G}, \mu, w}^{\BD}}{C^\circ} \cong \Gr_{\leq \mu} \times C^\circ \times \mathcal{F}\ell_{t_{\lambda}^{w_0}}$ is also normal then $\overline{\Gr}_{\mathcal{G}, \mu, w}^{\BD}$ is normal by Hironaka's lemma \cite[5.12.8]{EGA4II}.
\end{proof}

\begin{rmrk}
The functors $\Gr_{\mathcal{G}}^{\BD}$ and $\Gr_{\underline{G}}^{\BD}$ can also be described as the restrictions to $C \times \{0\}$ of quotients of global loop groups over $C^2$ which are similar to $\mathcal{L}\mathcal{G}$ and $\mathcal{L}^+\mathcal{G}$ from Section \ref{S.global} (see also \cite[\S 3.1]{ZhuGra}). The  referee has pointed out that it is possible to use this fact to show that $\pi_{\BD}$ is a $G/B$ fibration, and in particular it is smooth. Since normality and reducedness are local in the smooth topology, this leads to an alternative proof of Proposition \ref{BDfiber}.
\end{rmrk}

The following proposition is well known, but because we could not find a complete proof in the literature we provide one here. 

\begin{prop} \label{lengthadd}
Let $w$, $w' \in \tilde{W}$ be such that $\ell(w) + \ell(w') = \ell(w \cdot w')$. Then the convolution morphism $m \colon LG \times^I \mathcal{F}\ell \rightarrow \mathcal{F}{\ell}$ maps $C(w) \simtimes C(w')$ isomorphically onto $C(w \cdot w')$.
\end{prop}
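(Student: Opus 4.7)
The plan is to reduce by induction on $\ell(w')$ to the case where $w'$ is a single simple affine reflection, and handle that base case via the structure of affine root subgroups. First, for the base case: given $w \in \tilde{W}$ and a simple affine reflection $s$ with $\ell(ws) = \ell(w)+1$, we show $m \colon C(w) \simtimes C(s) \to C(ws)$ is an isomorphism. Write $C(w) = U_w \cdot wI/I$ with $U_w \cong \mathbb{A}^{\ell(w)}$ the product of affine root subgroups indexed by the inversion set of $w^{-1}$, and $C(s) = U_{\alpha_s} \cdot sI/I \cong \mathbb{A}^1$ for the simple affine root $\alpha_s$ attached to $s$. The length hypothesis translates to $w(\alpha_s)$ being a positive affine root outside the inversion set of $w^{-1}$, which yields the direct product decomposition $U_{ws} = U_w \cdot w U_{\alpha_s} w^{-1}$. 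Unwinding the twisted product via these sections identifies $m$, up to right translation by $ws$, with the product isomorphism $U_w \times U_{\alpha_s} \xrightarrow{\sim} U_{ws}$.

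For the general case, I induct on $\ell(w')$. The case $\ell(w') \leq 1$ is either trivial or the base case above. For $\ell(w') \geq 2$, write $w' = s v$ with $s$ a simple affine reflection and $\ell(v) = \ell(w') - 1$. Combining $\ell(ww') = \ell(w) + \ell(w')$ with the subadditivity of $\ell$ forces $\ell(ws) = \ell(w) + 1$ and $\ell(wsv) = \ell(ws) + \ell(v)$. By associativity of the twisted product together with the inductive hypothesis applied to the pair $(s, v)$, there are canonical isomorphisms
$$C(w) \simtimes C(w') \xrightarrow{\sim} C(w) \simtimes \bigl(C(s) \simtimes C(v)\bigr) \xrightarrow{\sim} \bigl(C(w) \simtimes C(s)\bigr) \simtimes C(v)$$
compatible with the multiplication maps to $\mathcal{F}\ell$. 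Applying the base case to the pair $(w, s)$ and the inductive hypothesis to $(ws, v)$, the right-hand side is carried isomorphically onto $C(wsv) = C(ww')$, closing the induction.

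The main technical point is the base case, which rests on the standard structure theory of affine Kac-Moody groups. A more geometric verification that avoids explicit affine root computations proceeds as follows: surjectivity of $m$ on $k$-points follows from the Tits system identity $IwI \cdot IsI = IwsI$ for $\ell(ws) > \ell(w)$; injectivity from the uniqueness of the resulting factorization; both source and target are smooth irreducible of dimension $\ell(w) + 1$; and $U_w$-equivariance (with $U_w$ acting simply transitively on appropriate transversal slices) combined with a tangent space calculation at the base point $(wI, sI)$ promotes this set-theoretic bijection to a scheme-theoretic isomorphism.
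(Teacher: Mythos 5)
Your argument is essentially correct, but it takes a genuinely different route from the paper's. The paper first splits $\tilde{W} = W_{\text{aff}} \rtimes \Omega$ and reduces to $w, w' \in W_{\text{aff}}$ by translating by lifts $\dot{\tau} \in LG(k)$ of length-zero elements, using that $\Omega$ normalizes $I$; for the affine Weyl group part it chooses reduced words, forms the affine Demazure resolution of $\mathcal{F}\ell_{w \cdot w'}$, and quotes Faltings \cite{FaltingsLoop} for the fact that a Demazure map attached to a reduced word restricts to an isomorphism from the product of the one-dimensional open cells onto $C(w \cdot w')$; applying this to $w\cdot w'$ and to the two factors separately gives the proposition. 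You instead run an induction on $\ell(w')$ whose base case is a single simple affine reflection, proved via the affine root-subgroup description $C(w) = U_w \cdot wI/I$ and the factorization $U_{ws} = U_w \cdot wU_{\alpha_s}w^{-1}$ forced by $\ell(ws) = \ell(w)+1$. This is in effect a self-contained proof of the very cell statement the paper imports from Faltings, and it is a valid alternative: it trades a citation for an explicit (but standard) piece of affine root-group structure theory. Your closing remark is also on point: in characteristic $p$ a bijective morphism between smooth varieties need not be an isomorphism, so the scheme-theoretic conclusion really does require either the root-group factorization or the \'{e}tale/tangent-space upgrade you sketch.

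The one place you are too quick is the length-zero part of $\tilde{W}$, which is precisely where the paper spends the second half of its proof. Since $\tilde{W} = W_{\text{aff}} \rtimes \Omega$ is not a Coxeter group, an element with $\ell(w') = 1$ is in general of the form $s\tau$ with $\tau \in \Omega$ nontrivial, so it is not literally covered by your base case; and the case $\ell(w') = 0$, i.e. $w' = \tau \in \Omega$, is not a tautology: one needs that $\Omega$ normalizes $I$, so that right translation by a lift $\dot{\tau}$ carries $C(w)$ isomorphically onto $C(w\tau)$ and identifies $C(w) \simtimes C(\tau)$ with $C(w)$. Once you record this, and note that $C(s\tau) = C(s)\dot{\tau}$ reduces the mixed length-one case to the pure simple-reflection case, your induction closes for all of $\tilde{W}$; without it, the bottom of the induction is genuinely incomplete for groups with $\Omega \neq 1$.
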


\begin{proof}
Let $W_{\text{aff}}$ be the affine Weyl group of $G$ and let $\Omega \subset \tilde{W}$ be the subgroup of length $0$ elements. Then we have a decomposition $\tilde{W} = W_{\text{aff}} \rtimes \Omega$. First suppose that $w$, $w' \in W_{\text{aff}}$. Let $w = s_1 \cdots s_{\ell(w)}$ and $w' = s_1' \cdots s_{\ell(w')}'$ be reduced expressions for $w $ and $w'$ as products of simple reflections. For each $i$ let $\mathcal{P}_i$ be the parahoric group scheme corresponding to $s_i$. Then $L^+(\mathcal{P}_i)/ I \cong \mathcal{F}\ell_{s_i}$, and each $\mathcal{F}\ell_{s_i}$ is abstractly isomorphic to $\mathbb{P}^1$. There is an affine Demazure resolution
$$\pi_{w\cdot w'} \colon \left( \simtimes \mathcal{F}\ell_{s_i} \right) \simtimes \left( \simtimes \mathcal{F}\ell_{s'_j} \right) \rightarrow \mathcal{F}\ell_{w \cdot w'}.$$ 
By \cite[\S 3]{FaltingsLoop}, since the product of the $s_i$ and the $s_j'$ gives a reduced expression for $w \cdot w'$, the morphism $\pi_{w \cdot w'}$ induces an isomorphism
$$\left( \simtimes C(s_i) \right) \simtimes \left( \simtimes C(s'_j) \right) \xrightarrow{\sim} C(w \cdot w').$$ As $\pi_{w \cdot w'} = m \circ (\pi_w \simtimes \pi_{w'})$  then by also applying \emph{loc. cit.} to the factors $\simtimes \mathcal{F}\ell_{s_i}$ and $\simtimes \mathcal{F}\ell_{s'_j}$ we see that $m$ maps $C(w) \simtimes C(w')$ isomorphically onto $C(w \cdot w')$.

For the general case, write $w = \tau w_{a}$ and $w' = w'_{a}\tau'$ for $w_a$, $w'_a \in W_{\text{aff}}$ and $\tau$, $\tau' \in \Omega$. Then $\ell(w) = \ell(w_a)$, $\ell(w') = \ell(w_a')$ and $\ell(w \cdot w') = \ell(w_a \cdot w'_a)$. Because $\Omega$ normalizes $I$ then $\Omega$ acts on both $\mathcal{F}\ell$ and $LG \times^I \mathcal{F}\ell$ by right multiplication. Furthermore, any lift $\dot{\tau}^{-1} \in LG(k)$ induces isomorphisms on both of these ind-schemes by left multiplication, and all of these isomorphisms send $I$-orbits to $I$-orbits. In particular, there is a commutative diagram as follows:
$$\xymatrix{
C(w) \simtimes C(w') \ar[rr]^{\dot{\tau}^{-1} \: ( \cdot ) \: \tau'^{-1}}_{\sim} \ar[d]^m & & C(w_a) \simtimes C(w'_a) \ar[d]^m \\
C(w \cdot w') \ar[rr]^{\dot{\tau}^{-1} \: ( \cdot )\:  \tau'^{-1}}_{\sim}   & & C(w_a \cdot w'_a)   }
$$
We have shown the morphism on the right is an isomorphism, thus so is the morphism on the left.
\end{proof}

\begin{prop}
For any $\mu \in X_*(T)^+$ and $w \in \tilde{W}$ the scheme $\overline{\Gr}_{\mathcal{G}, \mu, w}^{\BD}$ is Frobenius split compatibly with $(\overline{\Gr}_{\mathcal{G}, \mu, w}^{\BD})_{0}$.
\end{prop}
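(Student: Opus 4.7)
The plan is to reduce to the case of sufficiently dominant $w$ handled by Zhu, using the fact that Frobenius splittings tend to be compatible with all Schubert subvarieties simultaneously, together with the standard fact that scheme-theoretic intersections of compatibly split subschemes are compatibly split (provided they are reduced).

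First, given $\mu \in X_*(T)^+$ and an arbitrary $w \in \tilde{W}$, I would choose $\lambda \in X_*(T)^+$ sufficiently dominant so that $w \leq t_\lambda^{w_0}$ in the Bruhat order on $\tilde{W}$. Such a $\lambda$ exists because every element of $\tilde{W}$ is dominated by some element of the form $t_\lambda^{w_0}$ with $\lambda$ dominant. By taking reduced closures in $\Gr^{\BD}_{\mathcal{G}}$ of the generically stratified locus $\Gr_{\leq \mu} \times C^\circ \times \mathcal{F}\ell_w \subset \Gr_{\leq \mu} \times C^\circ \times \mathcal{F}\ell_{t_\lambda^{w_0}}$, this inclusion gives a closed immersion
\[
\overline{\Gr}_{\mathcal{G}, \mu, w}^{\BD} \hookrightarrow \overline{\Gr}_{\mathcal{G}, \mu, t_\lambda^{w_0}}^{\BD}.
\]
Moreover, Proposition \ref{BDfiber} shows that the special fiber $(\overline{\Gr}_{\mathcal{G}, \mu, w}^{\BD})_0$ is reduced, and it is clearly contained in $(\overline{\Gr}_{\mathcal{G}, \mu, t_\lambda^{w_0}}^{\BD})_0 \cong \mathcal{F}\ell_{t_{\mu+\lambda}^{w_0}}$.

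Next, by \cite[6.5]{ZhuCoherence} the scheme $\overline{\Gr}_{\mathcal{G}, \mu, t_\lambda^{w_0}}^{\BD}$ admits a Frobenius splitting compatibly with its special fiber. The key claim is that Zhu's splitting is also automatically compatible with the closed subscheme $\overline{\Gr}_{\mathcal{G}, \mu, w}^{\BD}$. I would verify this by examining the construction of the splitting in \cite{ZhuCoherence}: it arises from a section of an appropriate line bundle descended from a Bott--Samelson--Demazure type resolution (coming from a reduced expression for $t_\lambda^{w_0}$), and such splittings are compatible with all $I$-equivariant Schubert subvarieties obtained by restricting to subexpressions, which yields compatibility with all $\overline{\Gr}_{\mathcal{G}, \mu, w'}^{\BD}$ for $w' \leq t_\lambda^{w_0}$. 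Applying the standard principle (cf. \cite[5.7]{modpGr}) that if $X$ is Frobenius split compatibly with two reduced closed subschemes $Y_1$ and $Y_2$, then their scheme-theoretic intersection $Y_1 \cap Y_2$ is again reduced and compatibly split, I would apply this with $Y_1 = \overline{\Gr}_{\mathcal{G}, \mu, w}^{\BD}$ and $Y_2 = (\overline{\Gr}_{\mathcal{G}, \mu, t_\lambda^{w_0}}^{\BD})_0$. Their intersection is exactly $(\overline{\Gr}_{\mathcal{G}, \mu, w}^{\BD})_0$, so restricting the splitting gives a Frobenius splitting of $\overline{\Gr}_{\mathcal{G}, \mu, w}^{\BD}$ compatibly with its special fiber.

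The main obstacle is verifying the compatibility claim in the middle step: namely, that Zhu's splitting is not merely compatible with the special fiber but also with every global Schubert variety $\overline{\Gr}_{\mathcal{G}, \mu, w}^{\BD}$ for $w \leq t_\lambda^{w_0}$. In the finite-dimensional setting this sort of universal compatibility is standard (Ramanan--Ramanathan, Mathieu), and for affine Schubert varieties it follows from the corresponding Bott--Samelson/Demazure-type resolution arguments used by Faltings and Zhu; the technical point is to track how these compatibilities survive the Beilinson--Drinfeld global deformation. Once this is in hand, everything else is formal, and we obtain the proposition.
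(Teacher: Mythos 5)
There is a genuine gap, and it sits exactly where you flag your ``main obstacle.'' Your first missing ingredient is already the existence statement: \cite[6.5]{ZhuCoherence} produces a splitting of $\overline{\Gr}_{\mathcal{G}, \mu, t_\nu}^{\BD}$ compatible with its special fiber only for $w=t_\nu$ a \emph{sufficiently dominant translation element}, not for $w=t_\lambda^{w_0}$. Applying it verbatim to $\overline{\Gr}_{\mathcal{G}, \mu, t_\lambda^{w_0}}^{\BD}$ is a misapplication; extending the splitting to these elements is precisely the new content of the proposition. Your second missing ingredient is the ``key claim'' that the splitting is automatically compatible with every $\overline{\Gr}_{\mathcal{G}, \mu, w'}^{\BD}$, $w'\leq t_\lambda^{w_0}$. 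This is not available from the Bott--Samelson/Mathieu-type compatibilities in the fibers: the issue is exactly how compatibility behaves under the Beilinson--Drinfeld degeneration, and Zhu's compatibility statement \cite[6.8]{ZhuCoherence} only gives, on the \emph{convolution} space, a splitting of the open locus $U_1$ (which is $\Gr_\mu\times C^\circ\times\mathcal{F}\ell_w$ over $C^\circ$ and $C(t_\mu)\simtimes\mathcal{F}\ell_w$ over $0$) compatible with $(U_1)_0$ and with $U_1\cap(\Gr_\mu\times C^\circ\times\mathcal{F}\ell_{w'})$. Asserting the global compatibility as ``standard'' and deferring it is not a proof; if it were available directly for a huge translation $t_\nu$, the rest of your reduction (restrict the splitting, intersect compatibly split subschemes with the special fiber) would indeed be formal, but that is the step that must be supplied.

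The paper fills this gap by a different mechanism. For translations $w=t_\lambda$ one can indeed restrict Zhu's splitting, using \cite[6.8]{ZhuCoherence} together with \cite[1.1.7 (ii)]{BrionKumar} to pass from compatibility over $C^\circ$ to compatibility with the closure $\overline{\Gr}_{\mathcal{G},\mu,t_\lambda}^{\BD}$. For $w=t_\lambda^{w_0}$ one instead builds the splitting directly, following \cite[6.7]{ZhuCoherence}: take the open $U\subset\overline{\Gr}_{\mathcal{G},\mu,w}^{\conv}$ which is $\Gr_\mu\times C^\circ\times\mathcal{F}\ell_w$ over $C^\circ$ and $C(t_\mu)\simtimes C(w)$ over $0$. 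Because $\ell(t_\mu)+\ell(t_\lambda^{w_0})=\ell(t_{\mu+\lambda}^{w_0})$, Proposition \ref{lengthadd} shows $C(t_\mu)\simtimes C(w)$ maps isomorphically onto $C(t_{\mu+\lambda}^{w_0})$, so $U\to m(U)$ is a bijective birational morphism of normal integral schemes, hence an isomorphism by Zariski's main theorem \cite[4.4.3]{EGA3I}; here normality and the identification of the special fiber with $\mathcal{F}\ell_{t_{\mu+\lambda}^{w_0}}$ come from Proposition \ref{BDfiber}. One then checks that the complement of $m(U)$ in $\overline{\Gr}_{\mathcal{G},\mu,w}^{\BD}$ has codimension two, transports the splitting of $U_1$ (from \cite[6.8]{ZhuCoherence}, with its compatibilities) to $m(U)$, and extends it over the whole normal scheme by \cite[1.1.7]{BrionKumar}; the compatibilities with $(\overline{\Gr}_{\mathcal{G},\mu,w}^{\BD})_{0}$ and with $\overline{\Gr}_{\mathcal{G},\mu,w'}^{\BD}$ for $w'<w$ come along in the extension. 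Only after this is your final reduction (choose $\lambda$ with $w'\leq t_\lambda^{w_0}$ and restrict) legitimate. So the overall shape of your reduction matches the paper, but the two steps you treat as citations or as ``standard'' are exactly the ones requiring the new argument.
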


\begin{proof}
If $w = t_\nu$ for $\nu \in X_*(T)^+$ sufficiently dominant then this is \cite[6.5]{ZhuCoherence}. If $\lambda < \nu$ is also dominant then this splitting is compatible with the closed subscheme $\overline{\Gr}_{\mathcal{G}, \mu, t_\lambda}^{\BD} \subset \overline{\Gr}_{\mathcal{G}, \mu, t_\nu}^{\BD}$ by \cite[6.8]{ZhuCoherence} and \cite[1.1.7 (ii)]{BrionKumar}. A splitting of $\overline{\Gr}_{\mathcal{G}, \mu, t_\nu}^{\BD}$ compatible with $\overline{\Gr}_{\mathcal{G}, \mu, t_\lambda}^{\BD}$ and $(\overline{\Gr}_{\mathcal{G}, \mu, t_\nu}^{\BD})_{0}$ induces a splitting of $\overline{\Gr}_{\mathcal{G}, \mu, t_\lambda}^{\BD}$ compatible with $(\overline{\Gr}_{\mathcal{G}, \mu, t_\lambda}^{\BD})_{0}$. This proves the proposition when $w = t_\lambda$ for any dominant cocharacter $\lambda$. 

For the general case, note that for every $w' \in \tilde{W}$ there exists $\lambda \in X_*(T)^+$ such that $w' \leq t_{\lambda}^{w_0}$, so it suffices to show $\overline{\Gr}_{\mathcal{G}, \mu, t_{\lambda}^{w_0}}^{\BD}$ is compatibly Frobenius split with $(\overline{\Gr}_{\mathcal{G}, \mu, t_{\lambda}^{w_0}}^{\BD})_{0}$ and $\overline{\Gr}_{\mathcal{G}, \mu, w'}^{\BD}$ for all $\lambda \in X_*(T)^+$ and $w' \leq t_{\lambda}^{w_0}$. Henceforth we fix $\lambda \in X_*(T)^+$ and $w = t_{\lambda}^{w_0} \in \tilde{W}$. 

We will proceed by a similar argument as in \cite[6.7]{ZhuCoherence}. More precisely, we will construct an open subscheme $U \subset \overline{\Gr}_{\mathcal{G}, \mu, w}^{\conv}$ such that:
\begin{enumerate}
\item[(i)] The scheme $U$ maps isomorphically onto its image under $m \colon  \overline{\Gr}_{\mathcal{G}, \mu, w}^{\conv} \rightarrow \overline{\Gr}^{\BD}_{\mathcal{G}, \mu, w}$.
\item[(ii)] The complement of $m(U)$ in $\overline{\Gr}_{\mathcal{G}, \mu, w}^{\BD}$ has codimension two.
\item[(iii)] The scheme $U$ is Frobenius split. Since $\overline{\Gr}_{\mathcal{G}, \mu, w}^{\BD}$ is normal, then by \cite[1.1.7]{BrionKumar} the spitting of $m(U)$ extends to a splitting of $\overline{\Gr}_{\mathcal{G}, \mu, w}^{\BD}$. We will complete the proof of the proposition by showing
\item[(iv)] The resulting splitting of $\overline{\Gr}_{\mathcal{G}, \mu, w}^{\BD}$ is compatible with $(\overline{\Gr}_{\mathcal{G}, \mu, w}^{\BD})_{0}$ and $\overline{\Gr}_{\mathcal{G}, \mu, w'}^{\BD}$ for all $w' < w$. 
\end{enumerate}

As in \cite{ZhuCoherence}, we define $U_1 \subset \overline{\Gr}_{\mathcal{G}, \mu, w}^{\conv}$ to be the open subscheme which is $\Gr_{\mu} \times C^\circ \times \mathcal{F}\ell_{w}$ over $C^\circ$ and $C(t_\mu) \simtimes \mathcal{F}\ell_{w}$ over $0$. We also define $U \subset U_1$ to be the  open subscheme which is $\Gr_{\mu} \times C^\circ \times \mathcal{F}\ell_{w}$ over $C^\circ$ and $C(t_\mu) \simtimes C(w)$ over $0$. Since the lengths of $t_\mu$ and $w$ add then $C(t_\mu) \simtimes C(w)$ maps isomorphically onto $C(t_{\mu+\lambda}^{w_0})$ by Proposition \ref{lengthadd}. Thus $(U)_0$ maps isomorphically onto $m(U)_0$. Hence the morphism from $U$ to $m(U)$ is a bijective birational morphism between normal integral $k$-schemes, so it is an isomorphism by Grothendieck's reformulation of Zariski's main theorem \cite[4.4.3]{EGA3I}.

As $\Gr_{\leq \mu} - \Gr_{\mu}$ has codimension two in $\Gr_{\leq \mu}$, the complement of $\restr{m(U)}{C^\circ}   = \Gr_{\mu} \times C^\circ \times \mathcal{F}\ell_{w}$ has codimension two in $\restr{\overline{\Gr}_{\mathcal{G}, \mu, w}^{\BD}}{C^\circ} = \Gr_{\leq \mu} \times C^\circ \times \mathcal{F}\ell_{w}$. By Proposition \ref{BDfiber}, $(\overline{\Gr}_{\mathcal{G}, \mu, w}^{\BD})_{0} - m(U)_{0} = \mathcal{F}\ell_{t_{\mu+\lambda}^{w_0}} - C(t_{\mu+\lambda}^{w_0})$ has codimension one, so we conclude that $U$ satisfies (ii). Finally, (iii) and (iv) follow from the fact that $U_1$ is Frobenius split compatibly with $(U_1)_{0}$ and $U_1 \cap (\Gr_{\mu} \times C^\circ \times \mathcal{F}\ell_{w'})$ for all $w' \leq w$ by \cite[6.8]{ZhuCoherence}. 
\end{proof}

\begin{cor}
For any $\mu \in X_*(T)^+$ and $w \in \tilde{W}$ the scheme $(\overline{\Gr}_{\mathcal{G}, \mu, w}^{\BD})_{0}$ is reduced, Cohen--Macaulay, Frobenius split, and equidimensional of dimension $\dim \Gr_{\leq \mu} + \dim \mathcal{F}\ell_w$. 
\end{cor}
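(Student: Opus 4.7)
The plan is to bootstrap all four properties from the compatible Frobenius splitting of $\overline{\Gr}_{\mathcal{G}, \mu, w}^{\BD}$ with $(\overline{\Gr}_{\mathcal{G}, \mu, w}^{\BD})_{0}$ furnished by the previous proposition, combined with flatness of the projection $\overline{\Gr}_{\mathcal{G}, \mu, w}^{\BD} \to C$. The template is the proof of Theorem \ref{BDprops}.

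First I would show that the total space $\overline{\Gr}_{\mathcal{G}, \mu, w}^{\BD}$ is strongly $F$-regular. Its restriction to $C^\circ$ is isomorphic to $\Gr_{\leq \mu} \times C^\circ \times \mathcal{F}\ell_w$, a product of integral strongly $F$-regular schemes: $\Gr_{\leq \mu}$ and $\mathcal{F}\ell_w$ are strongly $F$-regular by Theorem \ref{globFreg}, and $C^\circ$ is smooth, hence strongly $F$-regular by \cite[3.1 (c)]{TightClosure}. Iterating Lemma \ref{prodlem} on affine opens shows that $\restr{\overline{\Gr}_{\mathcal{G}, \mu, w}^{\BD}}{C^\circ}$ is strongly $F$-regular. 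The previous proposition gives a Frobenius splitting of $\overline{\Gr}_{\mathcal{G}, \mu, w}^{\BD}$ compatible with the effective Cartier divisor $(\overline{\Gr}_{\mathcal{G}, \mu, w}^{\BD})_{0}$, which implies a splitting along this divisor (\cite[5.7]{modpGr}). Applying \cite[3.3 (a)]{TightClosure} exactly as in Theorem \ref{BDprops} then gives that $\overline{\Gr}_{\mathcal{G}, \mu, w}^{\BD}$ is strongly $F$-regular, hence in particular Cohen--Macaulay.

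Next, since $\overline{\Gr}_{\mathcal{G}, \mu, w}^{\BD}$ is the reduced closure of an integral scheme it is integral, and because its projection to $C$ is dominant with $C$ a smooth curve, this morphism is flat by \cite[III 9.7]{HartshorneAG}. Flatness over the regular base $C$ together with Cohen--Macaulayness of the total space yields Cohen--Macaulayness of the fiber $(\overline{\Gr}_{\mathcal{G}, \mu, w}^{\BD})_{0}$. Equidimensionality with dimension $\dim \Gr_{\leq \mu} + \dim \mathcal{F}\ell_w$ then follows from \cite[III 9.6]{HartshorneAG}, since the generic fiber $\Gr_{\leq \mu} \times \mathcal{F}\ell_w$ has this dimension.

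Finally, reducedness and Frobenius splitness of the special fiber are immediate from the compatible splitting (as in \cite[1.1.7]{BrionKumar}), since any compatibly split closed subscheme of a Frobenius split scheme is itself Frobenius split and reduced. The main conceptual content is the strong $F$-regularity argument of the first step; but because the compatible Frobenius splitting has already been constructed and the generic fiber has an explicit product form to which Lemma \ref{prodlem} applies, the proof reduces to a direct analogue of Theorem \ref{BDprops}, and no genuinely new difficulty arises.
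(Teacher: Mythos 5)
Your proposal is correct, and apart from one step it coincides with the paper's proof: reducedness and Frobenius splitness of the fiber from the compatible splitting (via \cite[1.2.1]{BrionKumar}), flatness of $\overline{\Gr}_{\mathcal{G}, \mu, w}^{\BD} \rightarrow C$ by \cite[III 9.7]{HartshorneAG}, Cohen--Macaulayness of the fiber by cutting the CM total space by the nonzerodivisor $t$ (\cite[0C6G]{stacks-project}), and equidimensionality by \cite[III 9.6]{HartshorneAG}. The genuine difference is how you get Cohen--Macaulayness of the total space $\overline{\Gr}_{\mathcal{G}, \mu, w}^{\BD}$: the paper deduces it directly from the compatible splitting plus Cohen--Macaulayness of $\restr{\overline{\Gr}_{\mathcal{G}, \mu, w}^{\BD}}{C^\circ}$ via \cite[5.4]{BlickleSchwede}, whereas you prove the stronger statement that the total space is strongly $F$-regular, using Lemma \ref{prodlem}, Theorem \ref{globFreg}, and the Hochster--Huneke criterion \cite[3.3 (a)]{TightClosure} exactly as in Theorem \ref{BDprops}. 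This is precisely the argument the paper gives afterwards in Theorem \ref{BDF}, so there is no circularity; you are simply front-loading (the $\BD$-half of) Theorem \ref{BDF} and reading off Cohen--Macaulayness as a consequence of strong $F$-regularity. What each route buys: the paper's version needs only the weaker input that the generic fiber is Cohen--Macaulay, while yours needs its strong $F$-regularity --- which is available and is needed later anyway --- at the cost of invoking the implication chain strongly $F$-regular $\Rightarrow$ $F$-rational $\Rightarrow$ Cohen--Macaulay. One small point of hygiene: to apply \cite[3.3 (a)]{TightClosure} you should note first that $(\overline{\Gr}_{\mathcal{G}, \mu, w}^{\BD})_{0}$ is an effective Cartier divisor, which follows from integrality of $\overline{\Gr}_{\mathcal{G}, \mu, w}^{\BD}$ and its dominance over $C$; in your write-up integrality and flatness appear only in the second paragraph, so this observation should be moved before the strong $F$-regularity argument, but that is an ordering issue, not a gap.
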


\begin{proof}
A Frobenius splitting of $\overline{\Gr}_{\mathcal{G}, \mu, w}^{\BD}$ compatible with $(\overline{\Gr}_{\mathcal{G}, \mu, w}^{\BD})_{0}$ induces a Frobenius splitting of $(\overline{\Gr}_{\mathcal{G}, \mu, w}^{\BD})_{0}$. Thus $(\overline{\Gr}_{\mathcal{G}, \mu, w}^{\BD})_{0}$ is reduced by \cite[1.2.1]{BrionKumar}. Furthermore, as $\restr{\overline{\Gr}_{\mathcal{G}, \mu, w}^{\BD}}{C^\circ} \cong \Gr_{\leq \mu} \times C^\circ \times \mathcal{F}\ell_{w}$ is Cohen--Macaulay then $\overline{\Gr}_{\mathcal{G}, \mu, w}^{\BD}$ is Cohen--Macaulay by \cite[5.4]{BlickleSchwede} (see also \cite[5.5]{RicharzLocalModels}). Thus $(\overline{\Gr}_{\mathcal{G}, \mu, w}^{\BD})_{0}$ is also Cohen--Macaulay by \cite[OC6G]{stacks-project}. Finally, the morphism $\overline{\Gr}_{\mathcal{G}, \mu, w}^{\BD} \rightarrow C$ is flat by \cite[III 9.7]{HartshorneAG}, so by \cite[III 9.6]{HartshorneAG} the fiber $(\overline{\Gr}_{\mathcal{G}, \mu, w}^{\BD})_{0}$ is equidimensional of dimension $\dim \Gr_{\leq \mu} + \dim \mathcal{F}\ell_w$.
\end{proof}

\begin{thm} \label{BDF}
For any $\mu \in X_*(T)^+$ and $w \in \tilde{W}$ the schemes $\overline{\Gr}_{\mathcal{G}, \mu, w}^{\BD}$ and $\overline{\Gr}_{\mathcal{G}, \mu, w}^{\conv}$  are strongly $F$-regular, $F$-rational, and have pseudo-rational singularities. 
\end{thm}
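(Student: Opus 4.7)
The plan is to reduce to proving strong $F$-regularity for both schemes, as the remaining conclusions follow from the chain of implications recorded after Theorem \ref{globFreg}. Both schemes restrict over $C^{\circ}$ to $\Gr_{\leq \mu} \times C^{\circ} \times \mathcal{F}\ell_{w}$. Since $\Gr_{\leq \mu}$ and $\mathcal{F}\ell_{w}$ are strongly $F$-regular by Theorem \ref{globFreg} and $C^{\circ}$ is smooth (hence strongly $F$-regular by \cite[3.1 (c)]{TightClosure}), iterated application of Lemma \ref{prodlem}---each stage being a product of integral $k$-varieties and therefore a domain---shows that this common restriction is strongly $F$-regular.

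For $\overline{\Gr}_{\mathcal{G}, \mu, w}^{\BD}$, I would mirror the proof of Theorem \ref{BDprops}. The structural morphism $\overline{\Gr}_{\mathcal{G}, \mu, w}^{\BD} \to C$ is flat by \cite[III 9.7]{HartshorneAG}, so $(\overline{\Gr}_{\mathcal{G}, \mu, w}^{\BD})_{0}$ is an effective Cartier divisor. The proposition established immediately above supplies a Frobenius splitting of $\overline{\Gr}_{\mathcal{G}, \mu, w}^{\BD}$ compatible with this Cartier divisor. Combining this splitting with the strong $F$-regularity of $\restr{\overline{\Gr}_{\mathcal{G}, \mu, w}^{\BD}}{C^{\circ}}$ from the preceding paragraph, \cite[3.3 (a)]{TightClosure} yields strong $F$-regularity of the entire scheme.

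For $\overline{\Gr}_{\mathcal{G}, \mu, w}^{\conv}$, I would use smooth descent from $\overline{\Gr}_{\mathcal{G}, \mu}$. The projection $p_{1} \colon \overline{\Gr}_{\mathcal{G}, \mu, w}^{\conv} \to \overline{\Gr}_{\mathcal{G}, \mu}$ sending $(x, \mathcal{E}_{1}, \mathcal{E}_{2}, \beta_{1}, \beta_{2}) \mapsto (x, \mathcal{E}_{1}, \beta_{1})$ has fibers identified with $\mathcal{F}\ell_{w}$ once a trivialization of $\mathcal{E}_{1}$ on a sufficiently large infinitesimal neighborhood of $\Gamma_{0}$ is chosen. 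Formally, for $n$ large enough (depending on $\mu$ and $w$) there is a smooth surjective torsor $T \to \overline{\Gr}_{\mathcal{G}, \mu}$ parameterizing such trivializations truncated at level $n$, and one checks from the functorial description that $T \times_{\overline{\Gr}_{\mathcal{G}, \mu}} \overline{\Gr}_{\mathcal{G}, \mu, w}^{\conv} \cong T \times \mathcal{F}\ell_{w}$. By Theorem \ref{BDprops} together with Lemma \ref{Fregdescent} applied to $T \to \overline{\Gr}_{\mathcal{G}, \mu}$, $T$ is strongly $F$-regular; Lemma \ref{prodlem} combined with Theorem \ref{globFreg} then yields that $T \times \mathcal{F}\ell_{w}$ is strongly $F$-regular; and Lemma \ref{Fregdescent}, now applied to the smooth surjection $T \times \mathcal{F}\ell_{w} \to \overline{\Gr}_{\mathcal{G}, \mu, w}^{\conv}$, transfers this to the target.

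The main technical point, and the likely source of difficulty, is the rigorous construction of the torsor $T$ and the verification that the pullback is precisely $T \times \mathcal{F}\ell_{w}$ with the correct bounds. This is a standard feature of convolution Grassmannians but should be formulated carefully in the global setting over $C$, keeping track of how the bound $\leq \mu$ on $\mathcal{E}_{1}$ and the bound $\mathcal{F}\ell_{w}$ on the modification $(\mathcal{E}_{2}, \beta_{2})$ interact with the truncation level $n$. Should this prove awkward, an alternative is to adapt the construction Zhu employs in \cite[6.7]{ZhuCoherence}---which already produces a Frobenius splitting of the open $U_{1} \subset \overline{\Gr}_{\mathcal{G}, \mu, w}^{\conv}$ compatible with $(U_{1})_{0}$---to a splitting of $\overline{\Gr}_{\mathcal{G}, \mu, w}^{\conv}$ compatible with $(\overline{\Gr}_{\mathcal{G}, \mu, w}^{\conv})_{0}$, after which \cite[3.3 (a)]{TightClosure} may be applied verbatim as in the $\BD$ case.
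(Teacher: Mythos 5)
Your proposal is correct and follows essentially the same route as the paper: reduce to strong $F$-regularity, handle $\overline{\Gr}_{\mathcal{G},\mu,w}^{\BD}$ exactly as in Theorem \ref{BDprops} using the compatible Frobenius splitting of the preceding proposition, and handle $\overline{\Gr}_{\mathcal{G},\mu,w}^{\conv}$ via the torsor of truncated trivializations from \cite[6.2.3]{ZhuCoherence}, which is precisely the paper's ``common smooth cover'' of $\overline{\Gr}_{\mathcal{G},\mu,w}^{\conv}$ and $\overline{\Gr}_{\mathcal{G},\mu}\times\mathcal{F}\ell_w$, combined with Lemmas \ref{prodlem} and \ref{Fregdescent}. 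The only difference is the order in which you apply the product and smooth-descent lemmas (the paper applies Lemma \ref{prodlem} to $\overline{\Gr}_{\mathcal{G},\mu}\times\mathcal{F}\ell_w$ and then ascends/descends along the cover), which is immaterial.
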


\begin{proof}
As in the proof of Theorem \ref{BDprops} it suffices to prove these schemes are strongly $F$-regular. Since $\restr{\overline{\Gr}_{\mathcal{G}, \mu, w}^{\BD}}{C^\circ} \cong \Gr_{\leq \mu} \times C^\circ \times \mathcal{F}\ell_{w}$ then $\restr{\overline{\Gr}_{\mathcal{G}, \mu, w}^{\BD}}{C^\circ}$ is strongly $F$-regular by Lemma \ref{prodlem} and Theorem \ref{globFreg}. Now as in the proof of Theorem \ref{BDprops} it follows that $\overline{\Gr}_{\mathcal{G}, \mu, w}^{\BD}$ is strongly $F$-regular because it is Frobenius split compatibly with $(\overline{\Gr}_{\mathcal{G}, \mu, w}^{\BD})_{0}$. 

To prove $\overline{\Gr}_{\mathcal{G}, \mu, w}^{\conv}$ is strongly $F$-regular, we first note that $\overline{\Gr}_{\mathcal{G}, \mu} \times \mathcal{F}\ell_{w}$ is strongly $F$-regular by Lemma \ref{prodlem}. By the isomorphism proceeding \cite[6.2.3]{ZhuCoherence}, $\overline{\Gr}_{\mathcal{G}, \mu, w}^{\conv}$ and $\overline{\Gr}_{\mathcal{G}, \mu} \times \mathcal{F}\ell_{w}$ have a common smooth cover. Since the property of strong $F$-regularity is local in the smooth topology (Lemma \ref{Fregdescent}) then $\overline{\Gr}_{\mathcal{G}, \mu, w}^{\conv}$ is strongly $F$-regular. 
\end{proof}

\subsection{Proofs of main results}

We continue using the notation introduced in Section \ref{BDsec}. For $\mu \in X_*(T)^+$ and $w \in \tilde{W}$ let $\mathcal{Z}_{\mu, w}$ be the shifted constant sheaf $\mathbb{F}_p[\dim (\Gr_{\mathcal{G}, \mu, w}^{\BD})_{0}]$ supported on $(\Gr_{\mathcal{G}, \mu, w}^{\BD})_{0}$. Parts (iii) and (iv) of Theorem \ref{mainthm} follow from the following proposition.

\begin{prop}
For $\mathcal{F}_1^\bullet \in P_{L^+G}(\Gr, \mathbb{F}_p)^{\sss}$ and $\mathcal{F}_2^{\bullet} \in P_I(\mathcal{F}\ell, \mathbb{F}_p)$ there are natural isomorphisms
\leavevmode
\begin{enumerate}[{\normalfont (i)}]
\item $\Psi_{\Gr^{\BD}_{\mathcal{G}}}(\IC_\mu \overset{L}{\boxtimes} \, \restr{\mathbb{F}_p[1]}{C^\circ} \overset{L}{\boxtimes} \IC_w^{\mathcal{F}\ell}) \cong \mathcal{Z}_{\mu, w}.$
\item $\Psi_{\Gr^{\BD}_{\mathcal{G}}}(\mathcal{F}_1^\bullet \overset{L}{\boxtimes} \restr{\mathbb{F}_p[1]}{C^\circ} \overset{L}{\boxtimes} \mathcal{F}_2^{\bullet}) \cong \mathcal{Z}(\mathcal{F}_1^\bullet) * \mathcal{F}_2^{\bullet}.$
\item $\Psi_{\Gr^{\BD}_{\mathcal{G}}}(\mathcal{F}_1^\bullet \overset{L}{\boxtimes} \restr{\mathbb{F}_p[1]}{C^\circ} \overset{L}{\boxtimes} \mathcal{F}_2^{\bullet}) \cong \mathcal{F}_2^{\bullet} * \mathcal{Z}(\mathcal{F}_1^{\bullet}).$
\end{enumerate}
Furthermore, each of these complexes is perverse and $I$-equivariant.
\end{prop}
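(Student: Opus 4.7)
The plan is to prove all three isomorphisms by reducing $\Psi_{\Gr^{\BD}_{\mathcal{G}}}$ to nearby cycles on the two convolution Grassmannians, at each step invoking the $F$-regularity results of Theorem \ref{BDF}. For part (i) I argue as in Proposition \ref{constprop}: $\overline{\Gr}^{\BD}_{\mathcal{G},\mu,w}$ is integral by Proposition \ref{BDfiber} and $F$-rational by Theorem \ref{BDF}, so its shifted constant sheaf is a simple perverse sheaf by \cite[1.7]{modpGr}, agrees generically with the simple perverse sheaf $\IC_\mu \overset{L}{\boxtimes} \restr{\mathbb{F}_p[1]}{C^\circ} \overset{L}{\boxtimes} \IC_w^{\mathcal{F}\ell}$, and therefore equals its intermediate extension. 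Restricting to the special fiber and shifting by $[-1]$ yields $\mathcal{Z}_{\mu,w}$, which is perverse because $(\overline{\Gr}^{\BD}_{\mathcal{G},\mu,w})_0$ is Cohen--Macaulay and equidimensional of the correct dimension by the corollary following Proposition \ref{BDfiber}.

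For part (ii) I factor $\Psi_{\Gr^{\BD}_{\mathcal{G}}}$ through the convolution Grassmannian using diagram (\ref{diagram2}). The first step is to identify $\Psi_{\Gr^{\conv}_{\mathcal{G}}}(\mathcal{F}_1^\bullet \overset{L}{\boxtimes} \restr{\mathbb{F}_p[1]}{C^\circ} \overset{L}{\boxtimes} \mathcal{F}_2^\bullet)$ with the twisted product $\mathcal{Z}(\mathcal{F}_1^\bullet) \tilde{\boxtimes} \mathcal{F}_2^\bullet$ on $(\Gr^{\conv}_{\mathcal{G}})_0 = LG \times^I \mathcal{F}\ell$. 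The forgetful map $\Gr^{\conv}_{\mathcal{G}} \to \Gr_{\mathcal{G}}$ sending $(x, \mathcal{E}_1, \mathcal{E}_2, \beta_1, \beta_2)$ to $(x, \mathcal{E}_1, \beta_1)$ is Zariski-locally a trivial $\mathcal{F}\ell$-fibration, so after pulling back along a sufficiently large finite-type torsor the family becomes a product, where Proposition \ref{nearbysmooth} and the descent characterization of $\tilde{\boxtimes}$ give the claim. The second step is to push forward via the proper birational morphism $m_{\mathcal{G}} \colon \Gr^{\conv}_{\mathcal{G}} \to \Gr^{\BD}_{\mathcal{G}}$; on simple inputs $\IC_\mu$, $\IC_w^{\mathcal{F}\ell}$ this is controlled exactly as in the proof of Theorem \ref{mainthm} (ii), combining the pseudo-rationality of $\overline{\Gr}^{\conv}_{\mathcal{G},\mu,w}$ and $\overline{\Gr}^{\BD}_{\mathcal{G},\mu,w}$ (Theorem \ref{BDF}) with Kov\'acs's theorem \cite[1.8]{ratsing} and the Artin--Schreier sequence. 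Semisimplicity in $\mathcal{F}_1^\bullet$, and induction on length in $\mathcal{F}_2^\bullet$ using that convolution with $\mathcal{Z}(\mathcal{F}_1^\bullet)$ is triangulated and that perverse sheaves are closed under extensions, extend the statement to all inputs.

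Part (iii) runs in parallel using a symmetric convolution Grassmannian $\Gr^{\conv'}_{\mathcal{G}}$ parameterizing tuples $(x, \mathcal{E}_1, \mathcal{E}_2, \beta_1, \beta_2)$ with $\beta_1 \colon \restr{\mathcal{E}_1}{C_R^\circ} \cong \restr{\mathcal{E}_0}{C_R^\circ}$ and $\beta_2 \colon \restr{\mathcal{E}_2}{C_R - \Gamma_x} \cong \restr{\mathcal{E}_1}{C_R - \Gamma_x}$: its generic fiber is $\mathcal{F}\ell \times C^\circ \times \Gr$ (factors in the opposite order from $\Gr^{\BD}_{\mathcal{G}}$'s generic fiber), its special fiber is $LG \times^I \mathcal{F}\ell$, and the canonical proper birational map $m'_{\mathcal{G}} \colon \Gr^{\conv'}_{\mathcal{G}} \to \Gr^{\BD}_{\mathcal{G}}$ matches our input sheaf after the canonical swap of generic-fiber factors. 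The analogues of Proposition \ref{BDfiber} and Theorem \ref{BDF} for subschemes of $\Gr^{\conv'}_{\mathcal{G}}$ hold by the same splitting arguments from Section \ref{BDsec}, and the method of (ii) produces $\mathcal{F}_2^\bullet * \mathcal{Z}(\mathcal{F}_1^\bullet)$. In all three cases perversity follows from the base case and closure of perverse sheaves under extensions, while $I$-equivariance follows from the $\mathcal{L}^+\mathcal{G}$-equivariance of the intermediate extension exactly as in Proposition \ref{equivprop}.

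The main obstacle I anticipate is step one of part (ii), namely pinning down the identification of $\Psi_{\Gr^{\conv}_{\mathcal{G}}}$ of a box product with a twisted product, since $\mathcal{L}\mathcal{G}$ is not of ind-finite type; the fix is to truncate to a finite-type quotient that acts on the supports of $\mathcal{F}_1^\bullet$ and $\mathcal{F}_2^\bullet$, compatibly with both the nearby cycles functor and the descent defining $\tilde{\boxtimes}$. Once that identification is in hand, all remaining steps are routine propagations from the simple-object / $F$-rational base case.
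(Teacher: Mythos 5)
Your overall architecture matches the paper's: reduce everything to the convolution Grassmannians, use Theorem \ref{BDF} and Kov\'acs's theorem \cite[1.8]{ratsing} plus Artin--Schreier to control pushforward along the birational maps to $\Gr^{\BD}_{\mathcal{G}}$, and your part (i) and the simple-object case of (ii)--(iii) (including the use of $\Gr^{\conv'}_{\mathcal{G}}$ and the swap of generic-fiber factors) are essentially the paper's argument. The genuine gap is in how you pass from simple objects to general ones. Both your intermediate claim $\Psi_{\Gr^{\conv}_{\mathcal{G}}}(\mathcal{F}_1^\bullet \overset{L}{\boxtimes} \restr{\mathbb{F}_p[1]}{C^\circ} \overset{L}{\boxtimes} \mathcal{F}_2^{\bullet}) \cong \mathcal{Z}(\mathcal{F}_1^\bullet)\overset{\sim}{\boxtimes}\mathcal{F}_2^{\bullet}$ and the target statement involve the intermediate extension functor ($j^{\conv}_{!*}$, resp.\ $j^{\BD}_{!*}$), which is \emph{not} exact in $\mathcal{F}_2^{\bullet}$. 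So ``induction on length in $\mathcal{F}_2^\bullet$ using that convolution with $\mathcal{Z}(\mathcal{F}_1^\bullet)$ is triangulated'' controls only the right-hand side; a short exact sequence in $\mathcal{F}_2^\bullet$ does not a priori produce an exact triangle on the left, so the d\'evissage does not go through as stated. Moreover, your step one for non-simple inputs amounts to commuting $j_{!*}$ with an external product (on the torsor pullback you need $j_{!*}(A\overset{L}{\boxtimes} B)\cong j_{!*}(A)\overset{L}{\boxtimes} B$), which is unproven here; note also that for $\mathbb{F}_p$-coefficients even the perversity of an external product of perverse sheaves is not formal (no Verdier duality), and the projection to the $C$-family factor is not smooth along $\mathcal{F}\ell_w$, so Proposition \ref{nearbysmooth} does not apply. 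The obstacle you flag (truncating $LG$ to a finite-type quotient) is routine; these two points are the real ones.

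The paper's fix, which you would need to import, is to never form $j^{\conv}_{!*}$ of the triple box product. Instead one forms the twisted product $j_{\mathcal{G},!*}(\mathcal{F}_1^\bullet\overset{L}{\boxtimes}\restr{\mathbb{F}_p[1]}{C^\circ})\overset{\sim}{\boxtimes}\mathcal{F}_2^{\bullet}$ on $\overline{\Gr}^{\conv}_{\mathcal{G},\mu,w}$, in which only the $\mathcal{F}_1$-factor is intermediate-extended and $\mathcal{F}_2^\bullet$ enters through an operation exact in $\mathcal{F}_2^\bullet$; its perversity is checked for simple inputs via the constant-sheaf-on-a-Cohen--Macaulay-equidimensional-scheme criterion \cite[1.6, 1.7]{modpGr} and then by induction on lengths. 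Its restriction to the special fiber is computed by a purely formal torsor pullback (the paper's (\ref{eq4})), with no appeal to nearby cycles on $\Gr^{\conv}_{\mathcal{G}}$. Finally, one proves that $Rm_{\mathcal{G},*}$ of this twisted product \emph{is} $j^{\BD}_{!*}(\mathcal{F}_1^\bullet\overset{L}{\boxtimes}\restr{\mathbb{F}_p[1]}{C^\circ}\overset{L}{\boxtimes}\mathcal{F}_2^{\bullet})$: since it is manifestly an extension of the generic-fiber sheaf, one uses the criterion that the middle term of an exact triangle of perverse sheaves whose outer terms are intermediate extensions is again an intermediate extension (proof of \cite[7.8]{modpGr}) to reduce to $\mathcal{F}_1^\bullet=\IC_\mu$, $\mathcal{F}_2^\bullet=\IC_w^{\mathcal{F}\ell}$, where both sides are constant sheaves and pseudo-rationality plus Artin--Schreier conclude. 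Only after this identification does one restrict to the special fiber and apply proper base change to obtain (ii), and the same scheme (with $\Gr^{\conv'}_{\mathcal{G}}$, whose relevant closed subscheme is handled via the torsor $\mathcal{P}_n$, Lemma \ref{prodlem} and Lemma \ref{Fregdescent} rather than new splittings) gives (iii). With your argument reorganized along these lines the proposal becomes correct; as written, the passage beyond simple objects is a gap.
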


\begin{proof}
Since $\overline{\Gr}_{\mathcal{G}, \mu, w}^{\BD}$ is integral and $F$-rational then by \cite[1.7]{modpGr}, $j_{!*}^{\BD}(\IC_\mu \overset{L}{\boxtimes} \, \restr{\mathbb{F}_p[1]}{C^\circ} \overset{L}{\boxtimes} \IC_w^{\mathcal{F}\ell} )$ is the constant sheaf $\mathbb{F}_p[\dim \overline{\Gr}_{\mathcal{G}, \mu, w}^{\BD}]$ supported on $\overline{\Gr}_{\mathcal{G}, \mu, w}^{\BD}$. Hence the isomorphism in (i) follows. As $(\overline{\Gr}_{\mathcal{G}, \mu, w}^{\BD})_{0}$ is Cohen--Macaulay and equidimensional  then $\mathcal{Z}_{\mu, w}$ is perverse by \cite[1.6]{modpGr}.

For (ii), suppose that $\mathcal{F}_1^\bullet$ is supported on $\Gr_{\leq \mu}$ and $\mathcal{F}_2^{\bullet}$ is supported on $\mathcal{F}\ell_{w}$. Let $I_n := L^n(\restr{\mathcal{G}}{{\hat{\mathcal{O}}_{0}}})$. As in \cite[6.2.3]{ZhuCoherence}, for some $n$ there is an $I_n$-torsor $\Gr_{\mathcal{G}, 0 ,n}$ over $\Gr_{\mathcal{G}}$ such that 
$$\overline{\Gr}_{\mathcal{G}, \mu, w}^{\conv} \cong \overline{\Gr}_{\mathcal{G}, \mu} \times_{\Gr_{\mathcal{G}}}  \Gr_{\mathcal{G}, 0 ,n} \times^{I_n} \mathcal{F}\ell_{w}.$$ Let $\phi_n^{\conv} \colon \overline{\Gr}_{\mathcal{G}, \mu} \times_{\Gr_{\mathcal{G}}}  \Gr_{\mathcal{G}, 0 ,n} \times \mathcal{F}\ell_{w} \rightarrow \overline{\Gr}_{\mathcal{G}, \mu, w}^{\conv}$ be the resulting $I_n$-torsor over $\overline{\Gr}_{\mathcal{G}, \mu, w}^{\conv}$.

By similar reasoning as in \cite[6.2]{modpGr} we can form the perverse sheaf
$$j_{\mathcal{G}, !*}(\mathcal{F}_1^\bullet \overset{L}{\boxtimes} \restr{\mathbb{F}_p[1]}{C^\circ}) \overset{\sim}{\boxtimes} \mathcal{F}_2^{\bullet} \in P_{\mathcal{L}^+ \mathcal{G}}(\overline{\Gr}_{\mathcal{G}, \mu, w}^{\conv}, \mathbb{F}_p).$$ The key point is that $j_{\mathcal{G}, !*}(\mathcal{F}_1^\bullet \overset{L}{\boxtimes} \restr{\mathbb{F}_p[1]}{C^\circ}) {\overset{L}{\boxtimes}} \, \mathcal{F}_2^{\bullet}$ is perverse when $\mathcal{F}_1^\bullet$ and $\mathcal{F}_2^{\bullet}$ are simple because then $j_{\mathcal{G}, !*}(\mathcal{F}_1^\bullet \overset{L}{\boxtimes} \restr{\mathbb{F}_p[1]}{C^\circ}) \overset{L}{\boxtimes} \mathcal{F}_2^{\bullet}$ is a constant sheaf supported on an equidimensional Cohen--Macaulay scheme. Thus $j_{\mathcal{G}, !*}(\mathcal{F}_1^\bullet \overset{L}{\boxtimes} \restr{\mathbb{F}_p[1]}{C^\circ}) {\overset{L}{\boxtimes}} \, \mathcal{F}_2^{\bullet}$ is perverse for general $\mathcal{F}_1^\bullet$ and $\mathcal{F}_2^{\bullet}$ by induction on their lengths.

 We claim there is an isomorphism 
\begin{equation} \label{eq4} Ri^{\conv,*}[-1](j_{\mathcal{G}, !*}(\mathcal{F}_1^\bullet \overset{L}{\boxtimes} \restr{\mathbb{F}_p[1]}{C^\circ}) \overset{\sim}{\boxtimes} \mathcal{F}_2^{\bullet}) \cong \mathcal{Z}(\mathcal{F}_1^\bullet) \overset{\sim}{\boxtimes} \mathcal{F}_2^{\bullet}.
\end{equation} 

Let $\phi_n \colon  \overline{\Gr}_{\mathcal{G}, \mu} \times_{\Gr_{\mathcal{G}}}  \Gr_{\mathcal{G}, 0 ,n} \rightarrow \overline{\Gr}_{\mathcal{G}, \mu}$ be the pullback of $\Gr_{\mathcal{G}, 0 ,n} \rightarrow \Gr_{\mathcal{G}}$ along $\overline{\Gr}_{\mathcal{G}, \mu} \rightarrow \Gr_{\mathcal{G}}$ and let $\phi_{0, n} \colon (\overline{\Gr}_{\mathcal{G},\mu})_{0, n} \rightarrow (\overline{\Gr}_{\mathcal{G}, \mu})_{0} $ be the fiber of $\phi_n$ over $0$.  By taking the fiber of $\phi_n^{\conv}$ over $0$ we get the following Cartesian diagram:
$$
\xymatrix{
\overline{\Gr}_{\mathcal{G}, \mu} \times_{\Gr_{\mathcal{G}}}  \Gr_{\mathcal{G}, 0 ,n} \times \mathcal{F}\ell_{w} \ar[d]^{\phi_n^{\conv}} & (\overline{\Gr}_{\mathcal{G},\mu})_{0,n} \times \mathcal{F}\ell_{w}  \ar[l]_(.4){i_{n}^{\conv}} \ar[d]^{\phi_{0, n}^{\conv}}\\
  \overline{\Gr}_{\mathcal{G}, \mu, w}^{\conv} & \ar[l]_{i^{\conv}} (\overline{\Gr}_{\mathcal{G},\mu})_{0} \simtimes \mathcal{F}\ell_{w}}
$$
Then (\ref{eq4}) follows from the following calculation:
\begin{align*} R\phi_{0,n}^{\conv, *} &(Ri^{\conv,*}[-1] (j_{\mathcal{G}, !*}(\mathcal{F}_1^\bullet \overset{L}{\boxtimes} \restr{\mathbb{F}_p[1]}{C^\circ}) \overset{\sim}{\boxtimes} \mathcal{F}_2^{\bullet})) \\ & \cong  Ri_n^{\conv,*}[-1] (R\phi_n^*(j_{\mathcal{G}, !*}(\mathcal{F}_1^\bullet \overset{L}{\boxtimes} \restr{\mathbb{F}_p[1]}{C^\circ})) \overset{L}{\boxtimes} \mathcal{F}_2^{\bullet} ) \\
& \cong  R\phi_{0, n}^*(Ri_{\mathcal{G}}^*[-1](j_{\mathcal{G}, !*}(\mathcal{F}_1^\bullet \overset{L}{\boxtimes} \restr{\mathbb{F}_p[1]}{C^\circ})))  \overset{L}{\boxtimes} \mathcal{F}_2^{\bullet}  \cong R\phi_{0, n}^*(\mathcal{Z}(\mathcal{F}_1^\bullet)) \overset{L}{\boxtimes} \mathcal{F}_2^{\bullet}.
\end{align*} 

To finish the proof of (ii) it suffices to construct a natural isomorphism
\begin{equation} \label{eq5}
 Rm_*( Ri^{\conv,*}[-1](j_{\mathcal{G}, !*}(\mathcal{F}_1^\bullet \overset{L}{\boxtimes} \restr{\mathbb{F}_p[1]}{C^\circ}) \: {\overset{\sim}{\boxtimes}} \: \mathcal{F}_2^{\bullet})) \rightarrow \Psi_{\Gr^{\BD}_{\mathcal{G}}}(\mathcal{F}_1^\bullet \overset{L}{\boxtimes} \restr{\mathbb{F}_p[1]}{C^\circ} \overset{L}{\boxtimes} \mathcal{F}_2^{\bullet}).
\end{equation} By the proper base change theorem, to establish an isomorphism as in (\ref{eq5}) it suffices to construct an isomorphism 
\begin{equation} \label{eq6} 
Rm_{\mathcal{G},*}(j_{\mathcal{G}, !*}(\mathcal{F}_1^\bullet \overset{L}{\boxtimes} \restr{\mathbb{F}_p[1]}{C^\circ}) \: {\overset{\sim}{\boxtimes}} \: \mathcal{F}_2^{\bullet}) \rightarrow j^{\BD}_{!*}(\mathcal{F}_1^\bullet \overset{L}{\boxtimes} \restr{\mathbb{F}_p[1]}{C^\circ} \overset{L}{\boxtimes} \mathcal{F}_2^{\bullet}).
\end{equation} Since $m_{\mathcal{G}}$ is an isomorphism over $C^\circ$ then the left side of (\ref{eq6}) is naturally an extension of $\mathcal{F}_1^\bullet \overset{L}{\boxtimes} \restr{\mathbb{F}_p[1]}{C^\circ} \overset{L}{\boxtimes} \mathcal{F}_2^{\bullet}$, so we just have to show it is the intermediate extension. Indeed, once we know the left side is isomorphic to the intermediate extension, then by \cite[2.11]{modpGr} there is a unique isomorphism as in (\ref{eq6}) which restricts to the identity map on $\mathcal{F}_1^\bullet \overset{L}{\boxtimes} \restr{\mathbb{F}_p[1]}{C^\circ} \overset{L}{\boxtimes} \mathcal{F}_2^{\bullet}$.

Since the middle term in an exact triangle is an intermediate extension if the outer terms are intermediate extensions (see the proof of \cite[7.8]{modpGr}) then by induction on the lengths of $\mathcal{F}_1^\bullet$ and $\mathcal{F}_2^{\bullet}$ we reduce to the case $\mathcal{F}_1^\bullet = \IC_{\mu}$ and $\mathcal{F}_2^{\bullet} = \IC_w^{\mathcal{F}\ell}$. Because all of the schemes appearing are integral and $F$-rational, then both  $j_{\mathcal{G}, !*}(\IC_\mu \overset{L}{\boxtimes} \, \restr{\mathbb{F}_p[1]}{C^\circ}) \overset{\sim}{\boxtimes} \IC_{w}^{\mathcal{F}\ell}$ and $j_{!*}^{\BD}(\IC_\mu \overset{L}{\boxtimes} \, \restr{\mathbb{F}_p[1]}{C^\circ} \overset{L}{\boxtimes} \IC_{w}^{\mathcal{F}\ell})$ are constant sheaves.

The map $m \colon \overline{\Gr}_{\mathcal{G}, \mu, w}^{\conv} \rightarrow \overline{\Gr}_{\mathcal{G}, \mu, w}^{\BD}$ is a birational map between normal, Cohen--Macaulay $k$-schemes having pseudo-rational singularities. Thus by \cite[1.8]{ratsing}, $Rm_*(\mathcal{O}_{\overline{\Gr}_{\mathcal{G}, \mu, w}^{\conv}}) \cong \mathcal{O}_{\overline{\Gr}_{\mathcal{G}, \mu, w}^{\BD}}$. By applying the Artin--Schreier sequence we complete the proof of (\ref{eq6}) and (ii). Applying (i), this also shows that $\mathcal{Z}(\mathcal{F}_1^\bullet) * \mathcal{F}_2^{\bullet}$ is perverse if $\mathcal{F}_1^\bullet = \IC_{\mu}$ and $\mathcal{F}_2^{\bullet} = \IC_w^{\mathcal{F}\ell}$. Convolution on the left by $\mathcal{Z}(\mathcal{F}_1^\bullet)$ sends short exact sequences in $P_I(\mathcal{F}\ell, \mathbb{F}_p)$ to exact triangles in $D_c^b(\mathcal{F}\ell_{\text{\'{e}t}}, \mathbb{F}_p)$ by a proof analogous to the one in \cite[6.7]{modpGr}. Thus, by induction on the lengths of $\mathcal{F}_1^\bullet$ and $\mathcal{F}_2^{\bullet}$ we conclude that $\mathcal{Z}(\mathcal{F}_1^\bullet) * \mathcal{F}_2^{\bullet}$ is perverse in general. For equivariance, we note that because $\mathcal{Z}(\mathcal{F}_1^\bullet)$ is $I$-equivariant then $\mathcal{Z}(\mathcal{F}_1^\bullet) \overset{\sim}{\boxtimes} \mathcal{F}_2^{\bullet}$ is $I$-equivariant for the action of $I$ on the left factor of $LG \times^I \mathcal{F}\ell$. As the map $m$ is $I$-equivariant then $\mathcal{Z}(\mathcal{F}_1^\bullet) * \mathcal{F}_2^{\bullet}$ $I$-equivariant by \cite[3.2]{modpGr}.

To prove (iii) we use the functor 
$$\Gr_{\mathcal{G}}^{\conv'}(R) = \left\{ (x, \mathcal{E}_1, \mathcal{E}_2, \beta_1, \beta_2) \: : \:     \begin{array}{lr}
       x \in C(R), \: \mathcal{E}_1, \mathcal{E}_2  \text{ are } \mathcal{G}\text{-torsors on } C_R,\\
       \beta_1 \colon \restr{\mathcal{E}_1}{C_R^\circ} \cong \restr{ \mathcal{E}_0}{C_R^\circ}, \: \beta_2 \colon \restr{\mathcal{E}_2}{C_R - \Gamma_x} \cong \restr{\mathcal{E}_1}{C_R - \Gamma_x} \\
     \end{array} \right\}. 
     $$
By \cite[7.2.6]{ZhuCoherence}, $\Gr_{\mathcal{G}}^{\conv'}$ is ind-proper over $C$. There is also a map $m_{\mathcal{G}}' \colon  \Gr_{\mathcal{G}}^{\conv'} \rightarrow \Gr_{\mathcal{G}}^{\BD}$ which sends $(x, \mathcal{E}_1, \mathcal{E}_2, \beta_1, \beta_2)$ to $(x, \mathcal{E}_2, \beta_1 \circ \beta_2)$. The map $m_{\mathcal{G}}'$ is an isomorphism over $C^\circ$ and it restricts to the convolution map $m \colon LG \times^I \mathcal{F}\ell \rightarrow \mathcal{F}\ell$ over $0$. 

Suppose $\mathcal{F}_1^\bullet$ is supported on $\Gr_{\leq \mu}$ and $\mathcal{F}_2^{\bullet}$ is supported on $\mathcal{F}\ell_{w}$. Let $n$ be an integer large enough so that $\mathcal{L}^+ \mathcal{G}$ acts on $\overline{\Gr}_{\mathcal{G}, \mu}$ through the quotient $\mathcal{L}^+_n \mathcal{G}$. Then as in the proof of \cite[7.4 (ii)]{ZhuCoherence} there is an $\mathcal{L}^+_n \mathcal{G}$-torsor $\mathcal{P}_n$ over $\mathcal{F}\ell_{w} \times C$ such that $\mathcal{P}_n \times^{\mathcal{L}^+_n \mathcal{G}} \overline{\Gr}_{\mathcal{G}, \mu} \subset \overline{\Gr}_{\mathcal{G}}^{\conv'}$ is a closed subscheme with
$$\restr{\mathcal{P}_n \times^{\mathcal{L}^+_n \mathcal{G}} \overline{\Gr}_{\mathcal{G}, \mu}}{C^\circ} \cong \mathcal{F}\ell_{w} \times C^\circ \times \Gr_{\leq \mu}, \quad (\mathcal{P}_n \times^{\mathcal{L}^+_n \mathcal{G}} \overline{\Gr}_{\mathcal{G}, \mu} )_{0} \cong \mathcal{F}\ell_{w} \simtimes (\overline{\Gr}_{\mathcal{G}, \mu})_{0}.$$ The scheme $\mathcal{P}_n \times^{L^+_n \mathcal{G}} \overline{\Gr}_{\mathcal{G}, \mu}$ is strongly $F$-regular by Lemma \ref{prodlem} and because this property is local in the smooth topology by Lemma \ref{Fregdescent}. Let $\phi_n^{\conv'} \colon \mathcal{P}_n \times_C \overline{\Gr}_{\mathcal{G}, \mu} \rightarrow \mathcal{P}_n \times^{\mathcal{L}^+_n \mathcal{G}} \overline{\Gr}_{\mathcal{G}, \mu}$ be the resulting $\mathcal{L}^+_n \mathcal{G}$-torsor.

Since $j_{\mathcal{G}, !*}(\mathcal{F}_1^\bullet \overset{L}{\boxtimes} \restr{\mathbb{F}_p[1]}{C^\circ})$ is $\mathcal{L}^+_n \mathcal{G}$-equivariant (see the proof of Proposition \ref{equivprop}), then by arguments analogous to those in the proof of (ii) we can form the perverse sheaf
$$(\mathcal{F}_2^{\bullet} \overset{L}{\boxtimes} \mathbb{F}_p[1]) \overset{\sim}{\boxtimes} j_{\mathcal{G}, !*}(\mathcal{F}_1^\bullet \overset{L}{\boxtimes} \restr{\mathbb{F}_p[1]}{C^\circ}) \in P_c^b(\overline{\Gr}_{\mathcal{G}}^{\conv'}, \mathbb{F}_p),$$ which is supported on $\mathcal{P}_n \times^{L^+_n \mathcal{G}} \overline{\Gr}_{\mathcal{G}, \mu}$. Here we are applying the operation $\overset{\sim}{\boxtimes}$ with respect to the fiber product of $C$-schemes rather than $k$-schemes. If $\mathcal{F}_1^\bullet = \IC_\mu$ and $\mathcal{F}_2^{\bullet} = \IC_{w}^{\mathcal{F}\ell}$ then $(\IC_{w}^{\mathcal{F}\ell} \overset{L}{\boxtimes} \, \mathbb{F}_p[1]) \overset{\sim}{\boxtimes} j_{\mathcal{G}, !*}(\IC_\mu \overset{L}{\boxtimes} \, \restr{\mathbb{F}_p[1]}{C^\circ})$ is a constant sheaf. Let $\mathcal{F}\ell_{w, n}$ be the $I_n$-torsor $(\mathcal{P}_n)_{0}$ over $\mathcal{F}\ell_{w}$. By taking the fiber over $0$ we get the following Cartesian diagram:
$$
\xymatrix{
\mathcal{P}_n \times_C \overline{\Gr}_{\mathcal{G}, \mu}  \ar[d]^{\phi_n^{\conv'}} & \mathcal{F}\ell_{w, n} \times (\overline{\Gr}_{\mathcal{G}, \mu})_{0}  \ar[l]_{i_{n}^{\conv'}} \ar[d]^{\phi_{0,n}^{\conv'}}\\
  \mathcal{P}_n \times^{\mathcal{L}^+_n \mathcal{G}} \overline{\Gr}_{\mathcal{G}, \mu}  & \ar[l]_{i^{\conv'}} \mathcal{F}\ell_{w} \simtimes (\overline{\Gr}_{\mathcal{G}, \mu})_{0}}
$$

Now we can finish the proof by following arguments analogous to those in the proof of (ii) to establish isomorphisms
$$Ri^{\conv', *}[-1]((\mathcal{F}_2^{\bullet} \overset{L}{\boxtimes} \mathbb{F}_p[1]) \overset{\sim}{\boxtimes} j_{\mathcal{G}, !*}(\mathcal{F}_1^\bullet \overset{L}{\boxtimes} \restr{\mathbb{F}_p[1]}{C^\circ})) \cong \mathcal{F}_2^{\bullet} \overset{\sim}{\boxtimes} \mathcal{Z}(\mathcal{F}_1^\bullet),$$ and
$$
Rm_{*}(Ri^{\conv', *}[-1]((\mathcal{F}_2^{\bullet} \overset{L}{\boxtimes} \mathbb{F}_p[1]) \overset{\sim}{\boxtimes} j_{\mathcal{G}, !*}(\mathcal{F}_1^\bullet \overset{L}{\boxtimes} \restr{\mathbb{F}_p[1]}{C^\circ}))) \cong \Psi_{\Gr^{\BD}_{\mathcal{G}}}(\mathcal{F}_1^\bullet \overset{L}{\boxtimes} \restr{\mathbb{F}_p[1]}{C^\circ} \overset{L}{\boxtimes} \mathcal{F}_2^{\bullet}).
$$
In this last isomorphism we are applying the natural isomorphism $\mathcal{F}\ell_w \times C^\circ \times \Gr_{\leq \mu} \cong \Gr_{\leq \mu}  \times C^\circ \times \mathcal{F}\ell_w$. We leave the details to the reader.
\end{proof}

Before we finish the proof of Theorem \ref{mainthm} we introduce the functor
$$\Gr_{\mathcal{G}}^{\conv''}(R) = \left\{  (x, \mathcal{E}_1, \mathcal{E}_2, \beta_1, \beta_2) \: : \:    \begin{array}{lr}
       x \in C(R), \: \mathcal{E}_1, \mathcal{E}_2  \text{ are } \mathcal{G}\text{-torsors on } C_R\\
       \beta_1 \colon \restr{\mathcal{E}_1}{C_R - \Gamma_x} \cong \restr{ \mathcal{E}_0}{C_R - \Gamma_x}, \: \beta_2 \colon \restr{\mathcal{E}_2}{C_R-\Gamma_x} \cong \restr{\mathcal{E}_1}{C_R-\Gamma_x} \\
     \end{array} \right\}.
     $$
There are isomorphisms
$$
\restr{\Gr_{\mathcal{G}}^{\conv''}}{C^\circ} \cong (LG \times^{L^+G} \Gr) \times C^\circ, \quad \quad \quad \quad (\Gr_{\mathcal{G}}^{\conv''})_{0} \cong  LG \times^I \mathcal{F}\ell.
$$ Moreover, by arguments similar to those proceeding \cite[6.2.3]{ZhuCoherence}, 
$$\Gr_{\mathcal{G}}^{\conv''} \cong \mathcal{L}\mathcal{G} \times^{\mathcal{L}^+ \mathcal{G}} \Gr_{\mathcal{G}} =: \Gr_{\mathcal{G}} \simtimes \Gr_{\mathcal{G}}.$$ Thus $\Gr_{\mathcal{G}}^{\conv''}$ is ind-proper over $C$. We define $\overline{\Gr}^{\conv''}_{\mathcal{G}, \mu, \lambda}$ to be the reduced closure of the subscheme $\Gr_{\leq \mu} \simtimes \Gr_{\leq \lambda} \times C^{\circ} \subset \Gr_{\mathcal{G}}^{\conv''}$.

\begin{prop} \label{CMProp}
The scheme $\overline{\Gr}^{\conv''}_{\mathcal{G}, \mu, \lambda}$ is Cohen--Macaulay, $(\overline{\Gr}^{\conv''}_{\mathcal{G}, \mu, \lambda})_0$ is reduced, and 
$$\overline{\Gr}^{\conv''}_{\mathcal{G}, \mu, \lambda} \cong (\overline{\Gr}_{\mathcal{G},\mu} \simtimes \overline{\Gr}_{\mathcal{G},\lambda})_{\textnormal{red}}.$$
\end{prop}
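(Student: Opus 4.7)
The plan is to identify $\overline{\Gr}^{\conv''}_{\mathcal{G}, \mu, \lambda}$ with the twisted product $\overline{\Gr}_{\mathcal{G},\mu} \simtimes \overline{\Gr}_{\mathcal{G},\lambda}$ directly, by showing that the latter is already integral, Cohen--Macaulay, and has reduced special fiber. Choose $n$ large enough that the action of $\mathcal{L}^+\mathcal{G}$ on $\overline{\Gr}_{\mathcal{G},\lambda}$ factors through $\mathcal{L}^+_n\mathcal{G}$, and let $\mathcal{P} \to \overline{\Gr}_{\mathcal{G},\mu}$ be the corresponding $\mathcal{L}^+_n\mathcal{G}$-torsor pulled back from $\mathcal{L}\mathcal{G} \to \Gr_{\mathcal{G}}$. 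Then $\overline{\Gr}_{\mathcal{G},\mu} \simtimes \overline{\Gr}_{\mathcal{G},\lambda} \cong \mathcal{P} \times^{\mathcal{L}^+_n\mathcal{G}} \overline{\Gr}_{\mathcal{G},\lambda}$, and there are two smooth $\mathcal{L}^+_n\mathcal{G}$-torsors
$$\overline{\Gr}_{\mathcal{G},\mu} \times_C \overline{\Gr}_{\mathcal{G},\lambda} \longleftarrow \mathcal{P} \times_C \overline{\Gr}_{\mathcal{G},\lambda} \longrightarrow \overline{\Gr}_{\mathcal{G},\mu} \simtimes \overline{\Gr}_{\mathcal{G},\lambda}.$$
Since Cohen--Macaulayness, reducedness, and (as $\mathcal{L}^+_n\mathcal{G}$ has geometrically connected fibers) irreducibility descend along smooth surjections of this form, each such property for the twisted product may be verified on $\overline{\Gr}_{\mathcal{G},\mu} \times_C \overline{\Gr}_{\mathcal{G},\lambda}$.

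By Theorem \ref{thm2.1} and Corollary \ref{fibercor}, $\overline{\Gr}_{\mathcal{G},\mu}$ and $\overline{\Gr}_{\mathcal{G},\lambda}$ are flat over $C$ and Cohen--Macaulay with Cohen--Macaulay fibers, so the fiber product $\overline{\Gr}_{\mathcal{G},\mu} \times_C \overline{\Gr}_{\mathcal{G},\lambda}$ is flat over $C$ with fibers that are products of Cohen--Macaulay $k$-schemes, hence itself Cohen--Macaulay. Its special fiber $\mathcal{A}(\mu) \times \mathcal{A}(\lambda)$ is reduced, since $\mathcal{A}(\mu)$ and $\mathcal{A}(\lambda)$ are reduced (Theorem \ref{ZhuFiber}) and $k$ is perfect. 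Smooth descent then yields Cohen--Macaulayness of $\overline{\Gr}_{\mathcal{G},\mu} \simtimes \overline{\Gr}_{\mathcal{G},\lambda}$ and reducedness of its special fiber $\mathcal{A}(\mu) \simtimes \mathcal{A}(\lambda)$.

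For the identification with $\overline{\Gr}^{\conv''}_{\mathcal{G}, \mu, \lambda}$, the restriction of $\overline{\Gr}_{\mathcal{G},\mu} \simtimes \overline{\Gr}_{\mathcal{G},\lambda}$ over $C^\circ$ is $(\Gr_{\leq \mu} \simtimes \Gr_{\leq \lambda}) \times C^\circ$, which is integral: $\Gr_{\leq \mu} \times \Gr_{\leq \lambda}$ is integral because both factors are geometrically integral over the algebraically closed field $k$, and smooth descent preserves this. Flatness over $C$ forces every irreducible component of the twisted product to dominate $C$, and therefore to meet the integral open locus over $C^\circ$, so the twisted product is irreducible. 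Combined with Cohen--Macaulayness (which precludes embedded primes) and generic reducedness inherited from the $C^\circ$-part, this shows the twisted product is integral, and in particular equal to its own reduction. As an integral scheme containing $(\Gr_{\leq \mu} \simtimes \Gr_{\leq \lambda}) \times C^\circ$ as a dense open, it coincides with the reduced closure of this subscheme in $\Gr_{\mathcal{G}}^{\conv''}$, which is $\overline{\Gr}^{\conv''}_{\mathcal{G}, \mu, \lambda}$ by definition. The hard part is less any single step and more the careful bookkeeping of smooth descent and flatness conclusions at the scheme-theoretic level, so that the various properties really transfer along the smooth torsors above; once that is in place no new ideas beyond previously established facts about the global affine Schubert varieties are needed.
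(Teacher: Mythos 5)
Your proposal is correct and follows essentially the same route as the paper: both compare $\overline{\Gr}_{\mathcal{G},\mu} \simtimes \overline{\Gr}_{\mathcal{G},\lambda}$ with the fiber product $\overline{\Gr}_{\mathcal{G},\mu} \times_C \overline{\Gr}_{\mathcal{G},\lambda}$ via $\mathcal{L}^+_n\mathcal{G}$-torsors, get irreducibility from flatness over $C$ together with the integral locus over $C^\circ$, and obtain Cohen--Macaulayness and reducedness of the special fiber from the product of the fibers using smooth-local descent. The only difference is cosmetic: you additionally show the twisted product is itself reduced (so the reduction in the statement is superfluous), whereas the paper only identifies $\overline{\Gr}^{\conv''}_{\mathcal{G},\mu,\lambda}$ with the reduction and then deduces the fiber and Cohen--Macaulay statements.
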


\begin{proof}
Note that $\overline{\Gr}^{\conv''}_{\mathcal{G}, \mu, \lambda}$ is a closed subscheme of  $\overline{\Gr}_{\mathcal{G},\mu} \simtimes \overline{\Gr}_{\mathcal{G},\lambda}$, and these two schemes are isomorphic over $C^\circ$. Thus the isomorphism in the proposition will follow if we show that $\overline{\Gr}_{\mathcal{G},\mu} \simtimes \overline{\Gr}_{\mathcal{G},\lambda}$ is irreducible.

To prove that $\overline{\Gr}_{\mathcal{G},\mu} \simtimes \overline{\Gr}_{\mathcal{G},\lambda}$ is irreducible, fix an integer $n$ large enough so that $\mathcal{L}^+\mathcal{G}$ acts on $\overline{\Gr}_{\mathcal{G}, \lambda}$ through the quotient $\mathcal{L}^+_n \mathcal{G}$. Let $\mathcal{L}\mathcal{G}_{\leq \mu}$ be the preimage of $\overline{\Gr}_{\mathcal{G}, \mu}$ under the quotient map $\mathcal{L} \mathcal{G} \rightarrow \mathcal{L} \mathcal{G} / \mathcal{L}^+ \mathcal{G} \cong \Gr_{\mathcal{G}}$, and let
$$\overline{\Gr}_{\mathcal{G}, \mu,n} = \mathcal{L}\mathcal{G}_{\leq \mu} \times^{\mathcal{L}^+ \mathcal{G}} \mathcal{L}^+_n \mathcal{G}.$$ Then there is a right $\mathcal{L}^+_n \mathcal{G}$-torsor $\phi_{\mathcal{G}, n} \colon \overline{\Gr}_{\mathcal{G}, \mu,n} \rightarrow \overline{\Gr}_{\mathcal{G}, \mu}$ such that
$$\overline{\Gr}_{\mathcal{G},\mu} \simtimes \overline{\Gr}_{\mathcal{G},\lambda}  \cong \overline{\Gr}_{\mathcal{G}, \mu,n} \times^{\mathcal{L}^+_n \mathcal{G}} \overline{\Gr}_{\mathcal{G}, \lambda}.$$ Thus it suffices to show that $\overline{\Gr}_{\mathcal{G}, \mu,n} \times_C \overline{\Gr}_{\mathcal{G}, \lambda}$ is irreducible.

Because $\mathcal{L}^+_n \mathcal{G}$ has geometrically irreducible fibers, then to prove that $\overline{\Gr}_{\mathcal{G}, \mu,n} \times_C \overline{\Gr}_{\mathcal{G}, \lambda}$ is irreducible it suffices to prove that $\overline{\Gr}_{\mathcal{G},\mu} \times_C \overline{\Gr}_{\mathcal{G},\lambda}$ is irreducible. Since $\overline{\Gr}_{\mathcal{G},\mu}$ and  $\overline{\Gr}_{\mathcal{G},\lambda}$ are flat over $C$ then $\overline{\Gr}_{\mathcal{G},\mu} \times_C \overline{\Gr}_{\mathcal{G},\lambda}$ is flat over $C$. Thus since $\restr{\overline{\Gr}_{\mathcal{G},\mu} \times_C \overline{\Gr}_{\mathcal{G},\lambda}}{C^\circ} = \Gr_{\leq \mu} \times \Gr_{\leq \lambda} \times C^\circ$ is irreducible it follows that $\overline{\Gr}_{\mathcal{G},\mu} \times_C \overline{\Gr}_{\mathcal{G},\lambda}$ is irreducible. Putting this all together, we have shown that $\overline{\Gr}^{\conv''}_{\mathcal{G}, \mu, \lambda} \cong (\overline{\Gr}_{\mathcal{G},\mu} \simtimes \overline{\Gr}_{\mathcal{G},\lambda})_{\text{red}}$.

By Corollary \ref{fibercor}, the fibers $(\overline{\Gr}_{\mathcal{G},\mu})_0$ and $(\overline{\Gr}_{\mathcal{G},\lambda})_0$ are reduced and Cohen--Macaulay. Hence the product $(\overline{\Gr}_{\mathcal{G},\mu})_0 \times (\overline{\Gr}_{\mathcal{G},\lambda})_0$ is also reduced and Cohen--Macaulay. As these two properties are local in the smooth topology, then $(\overline{\Gr}_{\mathcal{G},\mu})_0 \simtimes (\overline{\Gr}_{\mathcal{G},\lambda})_0$ is reduced and Cohen--Macaulay. Thus by the isomorphism $\overline{\Gr}^{\conv''}_{\mathcal{G}, \mu, \lambda} \cong (\overline{\Gr}_{\mathcal{G},\mu} \simtimes \overline{\Gr}_{\mathcal{G},\lambda})_{\text{red}}$ it follows that $(\overline{\Gr}^{\conv''}_{\mathcal{G}, \mu, \lambda})_0 \cong (\overline{\Gr}_{\mathcal{G},\mu})_0 \simtimes (\overline{\Gr}_{\mathcal{G},\lambda})_0$ is reduced and Cohen--Macaulay. As $\restr{\overline{\Gr}^{\conv''}_{\mathcal{G}, \mu, \lambda}}{C^\circ}$ is also Cohen--Macaulay then $\overline{\Gr}^{\conv''}_{\mathcal{G}, \mu, \lambda}$ is Cohen--Macaulay. 
\end{proof}

\begin{myproof1}[Proof of Theorem \ref{mainthm} (v)]
There is a morphism $m_{\mathcal{G}}'' \colon \Gr_{\mathcal{G}}^{\conv''} \rightarrow \Gr_{\mathcal{G}}$ which sends the element $(x,  \mathcal{E}_1, \mathcal{E}_2, \beta_1, \beta_2)$ to $(x, \mathcal{E}_2, \beta_1 \circ \beta_2)$. Over points in $C^\circ(k)$ the morphism $m_{\mathcal{G}}''$ is the convolution morphism $LG \times^{L^+G} \Gr \rightarrow \Gr$ and over $0$ the morphism $m_{\mathcal{G}}''$ is $m \colon LG \times^I \mathcal{F}\ell \rightarrow \mathcal{F}\ell$. By taking the fibers of $m_{\mathcal{G}}''$ over $C^\circ$ and $0$ we get the following diagram with Cartesian squares:

\begin{equation} \xymatrix{
(LG \times^{L^+G} \Gr) \times C^\circ \ar[r]^(.70){j^{\conv''}} \ar[d]^{} & \Gr_{\mathcal{G}}^{\conv''} \ar[d]^{m_{\mathcal{G}}''} & LG \times^I \mathcal{F}\ell \ar[l]_{i^{\conv''}} \ar[d]^{m} \\
\Gr \times C^\circ \ar[r]^(.55){j_{\mathcal{G}}}   & \Gr_{\mathcal{G}} & \mathcal{F}\ell \ar[l]_(.40){i_{\mathcal{G}}}  }
\end{equation}
Because the schemes $\overline{\Gr}^{\conv''}_{\mathcal{G}, \mu, \lambda} = (\overline{\Gr}_{\mathcal{G},\mu} \simtimes \overline{\Gr}_{\mathcal{G},\lambda})_{\text{red}}$ for $\mu$, $\lambda \in X_*(T)^+$ are Cohen--Macaulay, then by similar reasoning as in \cite[6.2]{modpGr} we can form the perverse sheaf
$$\mathcal{F}_{1,2}^\bullet := j_{\mathcal{G}, !*}(\mathcal{F}_1^\bullet \overset{L}{\boxtimes} \restr{\mathbb{F}_p[1]}{C^\circ}) \overset{\sim}{\boxtimes} j_{\mathcal{G}, !*}(\mathcal{F}_2^\bullet \overset{L}{\boxtimes} \restr{\mathbb{F}_p[1]}{C^\circ}) \in P_c^b(\Gr_{\mathcal{G}}^{\conv''}, \mathbb{F}_p).$$

To complete the proof it suffices to construct natural isomorphisms
\begin{enumerate}
\item[(i)] $Ri^{\conv'',*}[-1](\mathcal{F}_{1,2}^\bullet) \cong \mathcal{Z}(\mathcal{F}_1^\bullet) \overset{\sim}{\boxtimes} \mathcal{Z}(\mathcal{F}_1^\bullet)$,
\item[(ii)] $Rm_*(Ri^{\conv'',*}[-1](\mathcal{F}_{1,2}^\bullet)) \cong \mathcal{Z}( \mathcal{F}_1^\bullet * \mathcal{F}^\bullet_2).$
\end{enumerate}
Suppose $\mathcal{F}_1^\bullet$ is supported on $\Gr_{\leq \mu}$ and $\mathcal{F}_2^{\bullet}$ is supported on $\Gr_{\leq \lambda}$. As in the proof of Proposition \ref{CMProp}, fix an integer $n$ large enough so that $\mathcal{L}^+\mathcal{G}$ acts on $\overline{\Gr}_{\mathcal{G}, \lambda}$ through the quotient $\mathcal{L}^+_n \mathcal{G}$. Let $\phi_{\mathcal{G}, n} \colon \overline{\Gr}_{\mathcal{G}, \mu,n} \rightarrow \overline{\Gr}_{\mathcal{G}, \mu}$ be the right $\mathcal{L}^+_n \mathcal{G}$-torsor such that
$$\overline{\Gr}_{\mathcal{G},\mu} \simtimes \overline{\Gr}_{\mathcal{G},\lambda}  \cong \overline{\Gr}_{\mathcal{G}, \mu,n} \times^{\mathcal{L}^+_n \mathcal{G}} \overline{\Gr}_{\mathcal{G}, \lambda}.$$ Let $\phi_n^{\conv''} \colon (\overline{\Gr}_{\mathcal{G}, \mu,n} \times_C \overline{\Gr}_{\mathcal{G}, \lambda})_{\text{red}} \rightarrow \overline{\Gr}^{\conv''}_{\mathcal{G}, \mu, \lambda}$ be the resulting $\mathcal{L}^+_n \mathcal{G}$-torsor over $\overline{\Gr}^{\conv''}_{\mathcal{G}, \mu, \lambda}$. Over points in $C^\circ$ the map $\phi_{\mathcal{G},n} \colon \overline{\Gr}_{\mathcal{G}, \mu,n} \rightarrow \overline{\Gr}_{\mathcal{G}, \mu}$ restricts to an $L^nG$-torsor $p_n \colon \Gr_{\leq \mu, n} \rightarrow \Gr_{\leq \mu}$. By taking the fibers of $\phi_n^{\conv''}$ over $C^\circ$ and $0$ we get a diagram with Cartesian squares:
$$\xymatrix{
\Gr_{\leq \mu, n} \times \Gr_{\leq \lambda} \times C^\circ \ar[r]^(.50){j_n^{\conv''}} \ar[d] & (\overline{\Gr}_{\mathcal{G}, \mu,n} \times_C \overline{\Gr}_{\mathcal{G}, \lambda})_{\text{red}} \ar[d]^{\phi_n^{\conv''}} & (\overline{\Gr}_{\mathcal{G}, \mu,n})_{0} \times (\overline{\Gr}_{\mathcal{G}, \lambda})_0 \ar[d]^{\phi_{0,n}^{\conv''}} \ar[l]_(.45){i_n^{\conv''}} \\
\Gr_{\leq \mu} \simtimes \Gr_{\leq \lambda} \times C^{\circ} \ar[r]^(.6){j^{\conv''}} & \overline{\Gr}^{\conv''}_{\mathcal{G}, \mu, \lambda} & (\overline{\Gr}_{\mathcal{G},\mu})_{0} \simtimes (\overline{\Gr}_{\mathcal{G},\lambda})_{0} \ar[l]_(.55){i^{\conv''}}
}$$ An isomorphism as in (i) can be constructed in a manner similar to (\ref{eq4}) by pulling everything back to $(\overline{\Gr}_{\mathcal{G}, \mu,n})_{0} \times (\overline{\Gr}_{\mathcal{G}, \lambda})_0$. We leave the details to the reader. By using the left side of the above diagram one can also prove that
$$Rj^{\conv'',*}(\mathcal{F}_{1,2}^\bullet) \cong \mathcal{F}_1^\bullet \overset{\sim}{\boxtimes} \mathcal{F}_2^{\bullet} \overset{L}{\boxtimes} \restr{\mathbb{F}_p[1]}{C^\circ}.$$

By the proper base change theorem, to prove (ii) it suffices to construct a natural isomorphism
\begin{enumerate}
\item[(iii)] $Rm_{\mathcal{G},*}''(\mathcal{F}_{1,2}^\bullet) \cong j_{\mathcal{G},!*}((\mathcal{F}_1^\bullet * \mathcal{F}_2^\bullet) \overset{L}{\boxtimes} \restr{\mathbb{F}_p[1]}{C^\circ})$.
\end{enumerate}
By the description of $m_{\mathcal{G}}''$ over $C^\circ$ there is a natural isomorphism
$$R(\restr{m_{\mathcal{G}}''}{C^\circ})_*(Rj^{\conv'',*}(\mathcal{F}_{1,2}^\bullet)) \cong (\mathcal{F}_1^\bullet * \mathcal{F}_2^\bullet) \overset{L}{\boxtimes} \restr{\mathbb{F}_p[1]}{C^\circ}.$$ Thus the left side of (iii) is naturally an extension of $(\mathcal{F}_1^\bullet * \mathcal{F}_2^\bullet) \overset{L}{\boxtimes} \restr{\mathbb{F}_p[1]}{C^\circ}$, so we just need to show it is isomorphic to the intermediate extension. By semisimplicity we reduce to the case $\mathcal{F}_i^\bullet = \IC_{\mu_i}$ for $\mu_i \in X_*(T)^+$. Note that $m_{\mathcal{G}}''$ maps $\overline{\Gr}^{\conv''}_{\mathcal{G}, \mu_1, \mu_2}$ onto $\overline{\Gr}_{\mathcal{G}, \mu}$ where $\mu = \mu_1+\mu_2$. Moreover, $\IC_{\mu_1} * \IC_{\mu_2} \cong \IC_\mu$ by \cite[1.2]{modpGr}. Thus, $\mathcal{F}_{1,2}^\bullet$ and the right side of (iii) are shifted constant sheaves. 

Because the convolution morphism $\Gr_{\leq \mu_1} \simtimes \Gr_{\leq \mu_2} \rightarrow \Gr_{\leq \mu}$ is birational then the morphism $m_{\mathcal{G}}'' \colon \overline{\Gr}^{\conv''}_{\mathcal{G}, \mu_1, \mu_2} \rightarrow \overline{\Gr}_{\mathcal{G}, \mu}$ is also birational. We claim that $m_{\mathcal{G}}'' \colon \overline{\Gr}^{\conv''}_{\mathcal{G}, \mu_1, \mu_2} \rightarrow \overline{\Gr}_{\mathcal{G}, \mu}$ is projective. To prove this, note that $\overline{\Gr}_{\mathcal{G}, \mu}$ is projective over $C$ by \cite[5.5]{PappasZhu}. As $\Gr_{\mathcal{G}}^{\conv''} \cong \Gr_{\mathcal{G}} \simtimes \Gr_{\mathcal{G}} \cong \Gr_{\mathcal{G}} \times_C \Gr_{\mathcal{G}}$ (see, for example, \cite[1.2.14]{ZhuGra}), then $\overline{\Gr}^{\conv''}_{\mathcal{G}, \mu_1, \mu_2}$ is also projective over $C$. Thus $m_{\mathcal{G}}'' \colon \overline{\Gr}^{\conv''}_{\mathcal{G}, \mu_1, \mu_2} \rightarrow \overline{\Gr}_{\mathcal{G}, \mu}$ is  projective. Therefore, since $\overline{\Gr}_{\mathcal{G}, \mu}$ has pseudo-rational singularities and $\overline{\Gr}^{\conv''}_{\mathcal{G}, \mu_1, \mu_2}$ is Cohen--Macaulay then $Rm_{\mathcal{G},*}''(\mathcal{O}_{\overline{\Gr}^{\conv''}_{\mathcal{G}, \mu_1, \mu_2}}) \cong \mathcal{O}_{\overline{\Gr}_{\mathcal{G}, \mu}}$ by \cite[1.4]{ratsing}. Now (iii) follows by applying the Artin--Schreier sequence.
\end{myproof1}

\begin{rmrk} \label{assumption}
We have proved Theorem \ref{mainthm} under the hypothesis $G_{\text{der}}$ is simply connected and almost simple. We now explain how to remove the simple connectedness hypothesis using the same technique as in \cite[7.12]{modpGr}. The same idea is also used in \cite[3.3]{ZhuCoherence}.

Let $G$ be a connected reductive group over $k$ such that $G_{\text{der}}$ is almost simple. By \cite[3.1]{MilneShih} there exists a central extension $$1 \rightarrow N \rightarrow G' \rightarrow G \rightarrow 1$$ such that $G'_{\der}$ is simply connected and $N$ is a connected torus. Since $G_{\text{der}}$ is almost simple then so is $G'_{\text{der}}$. Let $T' \subset B' \subset G'$ be the maximal torus and Borel subgroup given by the preimages of $T$ and $B$, and let $I' \subset L^+G'$ be the preimage of $B'$ under the projection $L^+G' \rightarrow G'$. Let $\Gr_G$ and $\Gr_{G'}$ be the affine Grassmannians for $G$ and $G'$, respectively. Similarly, let $\mathcal{F}\ell_{G}$ and $\mathcal{F}\ell_{G'}$ be the Iwahori affine flag varieties.

Because $N$ is connected the map $X_*(T') \rightarrow X_*(T)$ is surjective. Hence the maps $\Gr_{G'} \rightarrow \Gr_{G}$ and $\mathcal{F}\ell_{G'} \rightarrow \mathcal{F}\ell_{G}$ are surjective. Let $\phi_1 \colon X_*(T')^+ \rightarrow X_*(T)^+$ be the induced surjection on dominant cocharacters and let $\phi_2 \colon \tilde{W}' \rightarrow \tilde{W}$ be the induced surjection on Iwahori--Weyl groups. Each connected component of $\Gr_{G'}$ maps onto its image in $\Gr_{G}$ via a universal homeomorphism by \cite[3.1]{UnivHomeo}. The same is true of the map $\mathcal{F}\ell_{G'} \rightarrow \mathcal{F}\ell_{G}$.

There is also a Bruhat--Tits group scheme $\mathcal{G}'$ over $C$ satisfying the conditions (\ref{isochoice2}) and equipped with a natural map $\mathcal{G}' \rightarrow \mathcal{G}$. This induces a map $\Gr_{\mathcal{G}'} \rightarrow \Gr_{\mathcal{G}}$. By restricting this map to $C^\circ$ and $0$ one sees that $\Gr_{\mathcal{G}'} \rightarrow \Gr_{\mathcal{G}}$ is surjective and maps each connected component of $\Gr_{\mathcal{G}'} $ into its image via a universal homeomorphism. Similarly, there are maps $\Gr_{\mathcal{G}'}^{\BD} \rightarrow \Gr_{\mathcal{G}}^{\BD}$, $\Gr_{\mathcal{G}'}^{\conv} \rightarrow \Gr_{\mathcal{G}}^{\conv}$, etc., which are surjective and restrict to universal homeomorphisms on connected components.

All of the diagrams of $C$-schemes we used in Sections \ref{S2} and \ref{S3} to prove Theorem 1.1 for $G'$ intertwine with the corresponding diagrams for $G$. For example, we have the following commutative diagram:
$$\xymatrix{
\Gr_{\mathcal{G}'} \ar[r] \ar[d]^{\pi_{\mathcal{G}'}} & \Gr_{\mathcal{G}} \ar[d]^{\pi_{\mathcal{G}}} \\
\Gr_{\underline{G}'} \ar[r] & \Gr_{\underline{G}}
}$$
Thus, by the topological invariance of the \'{e}tale site \cite[04DY]{stacks-project}, our arguments in Sections \ref{S2} and \ref{S3} can be used to simultaneously prove Theorem 1.1 for both $G'$ and $G$. 
\end{rmrk}

\section{Applications} \label{S4}
\subsection{The function-sheaf correspondence}
Let $X_0$ be a separated scheme of finite type over $\mathbb{F}_q$ and let $\mathcal{F}^\bullet _0 \in D_c^b(X_{0,\text{\'{e}t}}, \mathbb{F}_p)$. Fix an embedding of $\mathbb{F}_q$ into an algebraic closure $\overline{\mathbb{F}}_q$ and let $F^* \in \Gal(\overline{\mathbb{F}}_q/\mathbb{F}_q)$ be the inverse of the map which sends $\alpha \mapsto \alpha^q$. Let $X = X_0 \times_{\Spec(\mathbb{F}_q)} \Spec(\overline{\mathbb{F}}_q)$ and let $\mathcal{F}^\bullet$ be the pullback of $\mathcal{F}_0^\bullet$ to $X$. For $x \in X_0(\mathbb{F}_q)$ let $\mathcal{F}^\bullet_x$ be the pullback of $\mathcal{F}^\bullet$ along the composition $\Spec(\overline{\mathbb{F}}_q) \rightarrow \Spec(\mathbb{F}_q) \xrightarrow{x} X_0$. Then each $H^i(\mathcal{F}^\bullet_x)$ is a representation of $\Gal(\overline{\mathbb{F}}_q/\mathbb{F}_q)$ and it makes sense to take the trace $\Tr(F^*, H^i(\mathcal{F}^\bullet_x))$. 

We form a function $\Tr(\mathcal{F}^\bullet_0) \colon X_0(\mathbb{F}_q) \rightarrow \mathbb{F}_p$ by setting
$$\Tr(\mathcal{F}^\bullet_0)(x) = \sum_i (-1)^i \Tr(F^*, H^i(\mathcal{F}^\bullet_x)).$$ See also \cite[Ch. 2, \S 1]{SGA412} for more information on the construction of the function $\Tr(\mathcal{F}^\bullet_0)$. 
As in the case of $\overline{\mathbb{Q}}_{\ell}$-coefficients, we have:

\begin{thm} \label{fslemma} Let  $X_0$ and $Y_0$ be separated schemes of finite type over $\mathbb{F}_q$.
\leavevmode
\begin{enumerate}[{\normalfont (i)}]
\item Let $\mathcal{F}^\bullet_0 \in D_c^b(X_{0, \textnormal{\'{e}t}}, \mathbb{F}_p)$ and $\mathcal{G}^\bullet_0 \in D_c^b(Y_{0, \textnormal{\'{e}t}}, \mathbb{F}_p)$. If $x \in (X_0 \times_{\Spec(\mathbb{F}_q)} Y_0)(\mathbb{F}_q)$ has images $p_1(x) \in X_0(\mathbb{F}_q)$ and $p_2(x) \in Y_0(\mathbb{F}_q)$ then $$\Tr(\mathcal{F}^\bullet_0 \overset{L}{\boxtimes} \mathcal{G}^\bullet_0)(x) = \Tr(\mathcal{F}^\bullet_0)(p_1(x)) \Tr(\mathcal{G}^\bullet_0)(p_2(x)).$$ 
\item Let $f \colon Y_0 \rightarrow X_0$ be a morphism. If $\mathcal{F}^\bullet_0 \in D_c^b(X_{0, \textnormal{\'{e}t}}, \mathbb{F}_p)$ and $y \in Y_0(\mathbb{F}_q)$ then $$\Tr(Rf^* (\mathcal{F}^\bullet_0))(y) = \Tr(\mathcal{F}^\bullet_0)(f(y)).$$
\item If $\mathcal{F}^\bullet_0 \in D_c^b(X_{0, \textnormal{\'{e}t}}, \mathbb{F}_p)$ then $$\displaystyle\sum_{x \in X_0(\mathbb{F}_q)} \Tr(\mathcal{F}^\bullet_0)(x) = \sum_i (-1)^i \Tr(F^*, R^i \Gamma_c (X,\mathcal{F}^\bullet)).$$
\end{enumerate}
\end{thm}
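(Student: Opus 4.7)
The plan is to prove (ii) and (i) by direct stalk-level computations and to deduce (iii) from the general Grothendieck--Lefschetz trace formula. I expect (iii) to be the substantive step; (i) and (ii) are formal consequences of how stalks interact with pullback and external tensor product.

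For (ii), let $\bar{y} \colon \Spec(\overline{\mathbb{F}}_q) \to Y_0$ denote the geometric point associated to $y$; then $f \circ \bar{y}$ is the geometric point associated to $f(y)$, so by functoriality
$$(Rf^* \mathcal{F}_0^\bullet)_y = \bar{y}^* Rf^* \mathcal{F}_0^\bullet \cong (f \circ \bar{y})^* \mathcal{F}_0^\bullet = \mathcal{F}_{0, f(y)}^\bullet$$
as complexes of $\Gal(\overline{\mathbb{F}}_q/\mathbb{F}_q)$-representations. Taking cohomology and summing alternating traces of $F^*$ yields the claim. For (i), the key input is the natural isomorphism
$$\bar{x}^*(\mathcal{F}_0^\bullet \overset{L}{\boxtimes} \mathcal{G}_0^\bullet) \cong \mathcal{F}_{0, p_1(x)}^\bullet \otimes_{\mathbb{F}_p}^L \mathcal{G}_{0, p_2(x)}^\bullet$$
of $\Gal(\overline{\mathbb{F}}_q/\mathbb{F}_q)$-representations, which follows immediately from the definition of $\overset{L}{\boxtimes}$ together with the fact that stalks commute with pullback. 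Since $\mathbb{F}_p$ is a field the derived tensor product reduces to the ordinary tensor product of complexes, and the Künneth decomposition gives $H^i \cong \bigoplus_{j+k = i} H^j \otimes H^k$ with diagonal $F^*$-action. Multiplicativity of the trace on tensor products of finite-dimensional representations, combined with the elementary identity $\sum_i (-1)^i \bigl(\sum_{j+k=i} a_j b_k\bigr) = \bigl(\sum_j (-1)^j a_j\bigr)\bigl(\sum_k (-1)^k b_k\bigr)$, then yields (i).

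The main obstacle is (iii). One first checks that $R\Gamma_c(X, \mathcal{F}^\bullet)$ is a bounded complex of finite-dimensional $\mathbb{F}_p$-vector spaces so that the right-hand side is well defined; this is a consequence of constructibility together with the finiteness of compactly supported étale cohomology of a torsion constructible sheaf on a scheme of finite type over an algebraically closed field. The identity itself is an instance of the Grothendieck--Lefschetz trace formula applied to the geometric Frobenius acting on $R\Gamma_c(X, \mathcal{F}^\bullet)$. This formula is proved for arbitrary constructible torsion étale sheaves in \cite{SGA412}; the hypothesis that $\Char(\mathbb{F}_q)$ be invertible in the coefficient ring is imposed there only to guarantee finiteness of cohomology, which in our situation is automatic from constructibility, so the argument applies verbatim. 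Alternatively, one reduces by dévissage along the distinguished triangles coming from open--closed decompositions to the case of a lisse $\mathbb{F}_p$-sheaf on a smooth affine curve, where the identity can be established directly using the Artin--Schreier sequence and an explicit fixed-point count.
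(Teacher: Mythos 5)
Your treatments of (i) and (ii) are fine; they are exactly the stalk-level observations that the paper dismisses as ``immediate from the definitions.'' The genuine problem is your justification of (iii). The hypothesis in the usual Grothendieck--Lefschetz trace formula that the torsion of the coefficient ring be invertible on the base is \emph{not} imposed merely to guarantee finiteness of $R\Gamma_c$ (finiteness of compactly supported cohomology holds for arbitrary constructible torsion sheaves), and the standard proof does not ``apply verbatim'' to $\mathbb{F}_p$-coefficients in characteristic $p$. The arguments for the prime-to-$p$ formula in the Rapport of \cite{SGA412}, and in the Lefschetz--Verdier approach, rely on Poincar\'e duality for torsion sheaves, on wild-ramification inputs such as the Grothendieck--Ogg--Shafarevich formula, and on the structure of the tame fundamental group --- all of which are specific to torsion prime to $p$; indeed this very paper emphasizes that there is no Verdier duality functor for $\mathbb{F}_p$-sheaves in characteristic $p$. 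So the sentence claiming the char-$p$ hypothesis only ensures finiteness is false, and with it your main argument for (iii) collapses.

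What rescues the statement is that SGA~4$\frac12$ contains a \emph{separate} trace formula modulo $p$ (Deligne, ``Fonctions L modulo $\ell^n$ et modulo $p$,'' Th.~4.1), and this mod~$p$ formula is precisely what the paper cites for (iii). Its proof is genuinely different from the prime-to-$\ell$ case: after a d\'evissage one relates constructible $\mathbb{F}_p$-sheaves to coherent sheaves equipped with a Frobenius-linear endomorphism via a generalized Artin--Schreier sequence, and then proves a fixed-point/trace formula in coherent cohomology. Your ``alternative'' sketch gestures at this, but as written it is too thin to count as a proof: for a nonconstant lisse $\mathbb{F}_p$-sheaf the Artin--Schreier sequence for the constant sheaf does not apply directly, and the ``explicit fixed-point count'' is exactly the coherent trace formula that needs proving. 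The honest fix is simply to quote the mod~$p$ trace formula \cite[Fonctions L modulo $\ell^n$ et modulo $p$, Th.~4.1]{SGA412}, as the paper does, rather than the prime-to-$p$ Rapport.
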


\begin{proof}
Parts (i) and (ii) are immediate from the definitions, and part (iii) is \cite[4.1]{SGA412}.  
\end{proof}

\subsection{Perverse $\mathbb{F}_p$-sheaves over finite fields} For the rest of Section \ref{S4} we assume $G$ is a split connected reductive group defined over $\mathbb{F}_q$ and that $G_{\der}$ is absolutely almost simple. Let $\Gr_{\mathbb{F}_q}$ and $\mathcal{F}\ell_{\mathbb{F}_q}$ denote the affine Grassmannian and affine flag variety viewed as ind-schemes over $\mathbb{F}_q$. While we restricted to the case of an algebraically closed ground field in \cite{modpGr}, our constructions also work over an arbitrary perfect field of characteristic $p > 0$. In particular, we can construct the categories $P_{L^+G}(\Gr_{\mathbb{F}_q}, \mathbb{F}_p)$ and $P_I(\mathcal{F}\ell_{\mathbb{F}_q}, \mathbb{F}_p)$. The main difference when working over $\mathbb{F}_q$ is that there are more objects that are simple. In particular, \cite[3.18]{modpGr} needs to be revised in this setting, as there are non-trivial simple \'{e}tale local systems on $\Spec(\mathbb{F}_q)$. 

As in the case of $\overline{\mathbb{Q}}_\ell$-coefficients in \cite[5.6]{ZhuGra}, we will restrict ourselves to a certain subcategory of $P_{L^+G}(\Gr_{\mathbb{F}_q}, \mathbb{F}_p)$ consisting of normalized perverse sheaves. More precisely, let $\mathcal{L}$ be the  \'{e}tale local system on $\Spec(\mathbb{F}_q)$ corresponding to the representation $\Gal(\overline{\mathbb{F}}_q/ \mathbb{F}_q) \rightarrow \GL_1(\mathbb{F}_p)$ which sends $F^*$ to $-1$. If $X_0$ is a scheme over $\mathbb{F}_q$ then we can also view $\mathcal{L}$ as a local system on $X_0$ by pulling back along $X_0 \rightarrow \Spec(\mathbb{F}_q)$. 

Let $\mu \in X_*(T)^+$ be such that $\dim \Gr_{\leq \mu}$ has parity $p(\mu) \in \{0, 1\}$. Because \cite[1.7]{modpGr} also holds when $k$ is perfect (with the same proof), then $\IC_\mu$ is isomorphic to the shifted constant sheaf $\mathbb{F}_p[\dim \Gr_{\leq \mu}]$ supported on $\Gr_{\mathbb{F}_q, \leq \mu}$. We define the normalized $\IC$ complex
$$\IC_\mu^{\N} : = \IC_\mu \overset{L}{\otimes} \: \mathcal{L}^{\otimes p(\mu)} \in P_{L^+G}(\Gr_{\mathbb{F}_q}, \mathbb{F}_p).$$
Let $P_{L^+G}(\Gr_{\mathbb{F}_q}, \mathbb{F}_p)^{\N} \subset P_{L^+G}(\Gr_{\mathbb{F}_q}, \mathbb{F}_p)$ be the Serre subcategory consisting of perverse sheaves whose simple subquotients are all the form $\IC_\mu^{\N}$ for $\mu \in X_*(T)^+$. 

We claim the subcategory $P_{L^+G}(\Gr_{\mathbb{F}_q}, \mathbb{F}_p)^{\N}$ is monoidal. To prove this, we first note that the identity $\IC_{\mu_1} * \IC_{\mu_2} = \IC_{\mu_1+\mu_2}$ in \cite[1.2]{modpGr} also holds over $\mathbb{F}_q$. Indeed, this identity is derived from the result of Kov{\'a}cs \cite[1.4]{ratsing} which is independent of the ground field. From this fact and the projection formula \cite[0B54]{stacks-project} it follows that $\IC_{\mu_1}^{\N} * \IC_{\mu_2}^{\N} = \IC_{\mu_1+\mu_2}^{\N}$. 

Using the same arguments as in \cite{modpGr} one can show that $P_{L^+G}(\Gr_{\mathbb{F}_q}, \mathbb{F}_p)^{\N}$ is a symmetric monoidal category. For $\mathcal{F}^\bullet \in   P_{L^+G}(\Gr_{\mathbb{F}_q}, \mathbb{F}_p)$ let $\mathcal{F}_{\Gr}^\bullet \in P_{L^+G}(\Gr, \mathbb{F}_p)$ be its pullback to $\Gr$. Then one can also show that 
$$P_{L^+G}(\Gr_{\mathbb{F}_q}, \mathbb{F}_p)^{\N} \rightarrow \Vect_{\mathbb{F}_p}, \quad \mathcal{F}^\bullet \mapsto \bigoplus_i R^i\Gamma(\mathcal{F}^{\bullet}_{\Gr})
$$ is an exact, faithful, tensor functor. 

For $\mu \in X_*(T)^+$ let $$\mathcal{Z}_{\mathcal{A}(\mu)}^{\N} := \mathcal{Z}_{\mathcal{A}(\mu)} \overset{L}{\otimes} \: \mathcal{L}^{\otimes p(\mu)} \in P_I(\mathcal{F}\ell_{\mathbb{F}_q}, \mathbb{F}_p).$$ Let $P_{L^+G}(\Gr_{\mathbb{F}_q}, \mathbb{F}_p)^{\N, \text{ss}} \subset P_{L^+G}(\Gr_{\mathbb{F}_q}, \mathbb{F}_p)^{\N}$ be the tensor subcategory consisting of semisimple objects. If $p > 2$ the arguments in this paper also work over $\mathbb{F}_q$ and give rise to a functor $$\mathcal{Z} \colon P_{L^+G}(\Gr_{\mathbb{F}_q}, \mathbb{F}_p)^{\N, \text{ss}}  \rightarrow P_I(\mathcal{F}\ell_{\mathbb{F}_q}, \mathbb{F}_p)$$ which satisfies all parts of Theorem \ref{mainthm} and such that $\mathcal{Z}(\IC_\mu^{\N}) \cong \mathcal{Z}_{\mathcal{A}(\mu)}^{\N}.$

\begin{rmrk} The functor $P_{L^+G}(\Gr_{\mathbb{F}_q}, \mathbb{F}_p)^{\N} \rightarrow P_{L^+G}(\Gr, \mathbb{F}_p)$ induced by pullback is faithful and identifies the simple objects in these two categories, but it is not an equivalence of categories in general. The issue is that due to the failure of smooth base change, the group of extensions between two objects depends on the ground field (see the proof of \cite[6.14]{modpGr}). 
\end{rmrk}

\subsection{Proofs of applications to Hecke algebras}
In this section we prove Theorem \ref{mainapp} and Corollary \ref{maincor}. We fix an isomorphism $\mathbb{F}_q  [\![ t ]\!] \cong \mathcal{O}_E$. Recall that if $H \subset G(E)$ is a compact open subgroup then the multiplication on $\mathcal{H}_H$ is defined by
$$(f*g)(x) = \sum_{y \in G(E)/H} f(xy) g(y^{-1}).$$
We now verify that the function-sheaf correspondence respects the convolution of perverse sheaves and functions.

\begin{lem} \label{applem1} If $\mathcal{F}_1^\bullet$, $\mathcal{F}_2^{\bullet} \in P_{L^+G}(\Gr_{\mathbb{F}_q}, \mathbb{F}_p)$ then 
$$\Tr(\mathcal{F}_1^\bullet * \mathcal{F}_2^{\bullet}) = \Tr(\mathcal{F}_1^\bullet) * \Tr(\mathcal{F}_2^{\bullet}) \in \mathcal{H}_K.$$
Similarly, if $\mathcal{F}_1^\bullet$, $\mathcal{F}_2^{\bullet} \in P_I(\mathcal{F}\ell_{\mathbb{F}_q}, \mathbb{F}_p)$ then
$$\Tr(\mathcal{F}_1^\bullet * \mathcal{F}_2^{\bullet}) = \Tr(\mathcal{F}_1^\bullet) * \Tr(\mathcal{F}_2^{\bullet}) \in \mathcal{H}_I.$$
\end{lem}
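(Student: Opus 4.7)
\textbf{Proof plan for Lemma \ref{applem1}.} The two statements are proved by the same argument, so I will treat the $\mathcal{F}\ell$ case in detail and indicate the modifications for the $\Gr$ case at the end. Fix $x \in \mathcal{F}\ell_{\mathbb{F}_q}(\mathbb{F}_q) = G(E)/I$. The plan is to compute $\Tr(\mathcal{F}_1^\bullet * \mathcal{F}_2^\bullet)(x)$ via the Grothendieck--Lefschetz trace formula for the proper morphism $m$ in the convolution diagram \eqref{convdiagram}, and to identify the resulting sum with the convolution of traces.

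First, by proper base change applied to the fiber $m^{-1}(x)$ and Theorem \ref{fslemma}(iii), one has
$$\Tr(\mathcal{F}_1^\bullet * \mathcal{F}_2^\bullet)(x) \;=\; \sum_{z \in m^{-1}(x)(\mathbb{F}_q)} \Tr\bigl(\mathcal{F}_1^\bullet \overset{\sim}{\boxtimes} \mathcal{F}_2^\bullet\bigr)(z).$$
Next I would parametrize $m^{-1}(x)(\mathbb{F}_q)$: after replacing $I$ by the finite-type smooth quotient $I_n$ through which it acts on the relevant finite-type piece of $LG \times \mathcal{F}\ell$, the map $q$ becomes a Zariski-locally trivial $I_n$-torsor over $\mathbb{F}_q$. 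Since $I_n$ is connected and smooth, Lang's theorem gives $H^1(\mathbb{F}_q, I_n) = 0$, so every $\mathbb{F}_q$-point of $LG \times^I \mathcal{F}\ell$ lifts to an $\mathbb{F}_q$-point of $LG \times \mathcal{F}\ell$. Solving $m([g, hI]) = xI$ yields $hI = g^{-1}xI$, and the residual $I$-action identifies the fiber:
$$m^{-1}(x)(\mathbb{F}_q) \;\xleftrightarrow{\;\sim\;}\; G(E)/I, \qquad gI \longleftrightarrow [g,\, g^{-1}xI].$$

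Third, I would compute each summand using the defining relation $Rq^*(\mathcal{F}_1^\bullet \overset{\sim}{\boxtimes} \mathcal{F}_2^\bullet) \cong Rp^*(\mathcal{F}_1^\bullet \overset{L}{\boxtimes} \mathcal{F}_2^\bullet)$. For $z = [g, g^{-1}xI]$ with lift $(g, g^{-1}xI) \in (LG \times \mathcal{F}\ell)(\mathbb{F}_q)$, Theorem \ref{fslemma}(ii) applied to $q$ and $p$, combined with Theorem \ref{fslemma}(i) for the external tensor product, gives
$$\Tr\bigl(\mathcal{F}_1^\bullet \overset{\sim}{\boxtimes} \mathcal{F}_2^\bullet\bigr)(z) \;=\; \Tr(\mathcal{F}_1^\bullet)(gI)\cdot\Tr(\mathcal{F}_2^\bullet)(g^{-1}xI).$$
Summing and applying the change of variables $gI \mapsto xy^{-1}I$ transforms the expression into
$$\sum_{yI \in G(E)/I} \Tr(\mathcal{F}_1^\bullet)(xy)\,\Tr(\mathcal{F}_2^\bullet)(y^{-1}) \;=\; \bigl(\Tr(\mathcal{F}_1^\bullet) * \Tr(\mathcal{F}_2^\bullet)\bigr)(x),$$
which is the desired identity in $\mathcal{H}_I$.

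The main technical point is that the function-sheaf correspondence must be threaded through the torsor $q$; this is the role of Lang's theorem, which applies because the Iwahori $I$ (and for the $\Gr$ case the positive loop group $L^+G$) is a pro-algebraic group with smooth connected finite-type quotients over $\mathbb{F}_q$. For the $\Gr_{\mathbb{F}_q}$ case one runs the identical argument with the corresponding convolution diagram $\Gr \times \Gr \leftarrow LG \times \Gr \to LG \times^{L^+G} \Gr \to \Gr$; Lang's theorem applies to $L^nG$ since $G$ is a split connected reductive group over $\mathbb{F}_q$, and the parametrization of the fiber of the multiplication map by $G(E)/K$ proceeds exactly as above, yielding the convolution formula in $\mathcal{H}_K$.
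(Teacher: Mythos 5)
Your proposal is correct and follows essentially the same route as the paper's proof (which itself reproduces the $\overline{\mathbb{Q}}_\ell$ argument of Zhu): proper base change plus Theorem \ref{fslemma}(iii) to write $\Tr(\mathcal{F}_1^\bullet * \mathcal{F}_2^\bullet)(x)$ as a sum over $m^{-1}(x)(\mathbb{F}_q)$, then the defining relation $Rq^*(\mathcal{F}_1^\bullet \overset{\sim}{\boxtimes} \mathcal{F}_2^\bullet) \cong Rp^*(\mathcal{F}_1^\bullet \overset{L}{\boxtimes} \mathcal{F}_2^\bullet)$ together with \ref{fslemma}(i),(ii) to evaluate each summand, and the paper's identification $m^{-1}(x)(\mathbb{F}_q)=\{(xy,y^{-1}): y\in G(E)/I\}$ is exactly your Lang-theorem parametrization (which you in fact justify in more detail than the paper). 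The only blemish is the stated change of variables $g\mapsto xy^{-1}$, which after your relabeling gives the right sum but is more cleanly written as $y = x^{-1}g$; this is purely notational.
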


\begin{proof}
We will only consider the case $\mathcal{F}_1^\bullet$, $\mathcal{F}_2^{\bullet} \in P_I(\mathcal{F}\ell_{\mathbb{F}_q}, \mathbb{F}_p)$; the other case can be handled by similar methods. The proof is essentially the same as that of \cite[5.6.1]{ZhuGra} in the case of  $\overline{\mathbb{Q}}_\ell$-coefficients, but because this lemma is important for our applications we will reproduce the details. Let $m \colon LG \times^{I} \mathcal{F}\ell \rightarrow \mathcal{F}\ell$ be the multiplication map and let $x \in \mathcal{F}\ell_{\mathbb{F}_q}(\mathbb{F}_q) = G(E)/I$. There is a natural identification $m^{-1}(x)(\mathbb{F}_q) = \{(xy, y^{-1}) \: : \: y \in G(E)/I\}$. By Theorem \ref{fslemma} (iii) and the proper base change theorem,
$$\Tr( \mathcal{F}_1^\bullet * \mathcal{F}_2^{\bullet})(x) = \sum_{y \in G(E)/I} \Tr( \mathcal{F}_1^\bullet \overset{\sim}{\boxtimes} \mathcal{F}_2^{\bullet})(xy,y^{-1}).$$ Now viewing $xy$ and $y^{-1}$ as elements of $G(E)/I$ and using Theorem \ref{fslemma} (i) and (ii) we have
$$ \Tr( \mathcal{F}_1^\bullet \overset{\sim}{\boxtimes} \mathcal{F}_2^{\bullet})(xy,y^{-1}) = \Tr(\mathcal{F}_1^\bullet)(xy) \Tr(\mathcal{F}_2^{\bullet})(y^{-1}).$$
\end{proof}

\begin{lem} \label{applem2}
Let $\mathcal{F}^\bullet \in P_I(\mathcal{F}\ell_{\mathbb{F}_q}, \mathbb{F}_p)$. Then
$$\Tr(R \pi_! (\mathcal{F}^\bullet)) = \Tr(\mathcal{F}^\bullet) * \mathds{1}_K \in \mathcal{H}_K.$$
\end{lem}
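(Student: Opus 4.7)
The plan is to evaluate both sides of the claimed identity pointwise on $G(E)/K = \Gr_{\mathbb{F}_q}(\mathbb{F}_q)$ and show they agree via a direct application of proper base change together with Theorem \ref{fslemma}, in close analogy with the previous lemma.

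First I would fix $x \in \Gr_{\mathbb{F}_q}(\mathbb{F}_q)$ and represent it by an element $g \in G(E)$ so that $x$ is the coset $gK$. Since $\pi \colon \mathcal{F}\ell \to \Gr$ is proper (the fibers being isomorphic to $G/B$), proper base change applied to the Cartesian square
$$\xymatrix{
\pi^{-1}(x) \ar[r] \ar[d] & \mathcal{F}\ell_{\mathbb{F}_q} \ar[d]^{\pi} \\
\Spec(\mathbb{F}_q) \ar[r]^{x} & \Gr_{\mathbb{F}_q}
}$$
gives a canonical isomorphism $x^*(R\pi_!\mathcal{F}^\bullet) \cong R\Gamma_c(\pi^{-1}(x), \mathcal{F}^\bullet|_{\pi^{-1}(x)})$ compatible with the Galois action. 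Applying Theorem \ref{fslemma} (iii) to $\pi^{-1}(x)$ regarded as a separated finite-type $\mathbb{F}_q$-scheme, and then Theorem \ref{fslemma} (ii) to the inclusion $\pi^{-1}(x) \hookrightarrow \mathcal{F}\ell_{\mathbb{F}_q}$, yields
$$\Tr(R\pi_!\mathcal{F}^\bullet)(x) = \sum_{y \in \pi^{-1}(x)(\mathbb{F}_q)} \Tr(\mathcal{F}^\bullet)(y).$$

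Next I would identify the set of $\mathbb{F}_q$-points of the fiber. Under $\mathcal{F}\ell_{\mathbb{F}_q}(\mathbb{F}_q) = G(E)/I$ and $\Gr_{\mathbb{F}_q}(\mathbb{F}_q) = G(E)/K$, the fiber $\pi^{-1}(gK)(\mathbb{F}_q)$ is precisely $\{gkI : k \in K/I\}$, where $K/I \cong (G/B)(\mathbb{F}_q)$ is a finite set since $G$ is split. So the above sum becomes $\sum_{k \in K/I} \Tr(\mathcal{F}^\bullet)(gk)$.

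On the other hand, unpacking the definition of convolution in $\mathcal{H}_I$ (with Haar measure normalized so that $I$ has volume $1$) and using that $\mathds{1}_K(y^{-1}) = 1$ iff $y \in K$, we compute
$$(\Tr(\mathcal{F}^\bullet) * \mathds{1}_K)(g) = \sum_{y \in G(E)/I} \Tr(\mathcal{F}^\bullet)(gy)\mathds{1}_K(y^{-1}) = \sum_{k \in K/I} \Tr(\mathcal{F}^\bullet)(gk),$$
which matches the expression above and is independent of the representative $g$ of $gK$ (giving that both sides are right $K$-invariant functions on $G(E)$). The main thing to get right is the bookkeeping of cosets and the verification that the set $K/I$ indeed parametrizes the $\mathbb{F}_q$-points of the fiber; once these are in place, the lemma follows. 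This is not a genuine obstacle but the only step requiring care.
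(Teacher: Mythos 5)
Your proposal is correct and follows essentially the same route as the paper: evaluate pointwise, use Theorem \ref{fslemma} (iii) together with proper base change to write $\Tr(R\pi_!\mathcal{F}^\bullet)(x)$ as a sum over $\pi^{-1}(x)(\mathbb{F}_q) = \{xy : y \in K/I\}$, and then match this with the definition of $\Tr(\mathcal{F}^\bullet) * \mathds{1}_K$. The extra bookkeeping you spell out (the identification of the fiber's $\mathbb{F}_q$-points and the coset manipulations) is exactly what the paper leaves as "follows from the definitions," so there is no gap.
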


\begin{proof}
If $x \in \Gr_{\mathbb{F}_q}(\mathbb{F}_q)$ then $\pi^{-1}(x)(\mathbb{F}_q) = \{xy \: : \: y \in K/I \}$. We claim that
$$\Tr(R \pi_! (\mathcal{F}^\bullet))(x)  = \sum_{y \in K/I} \Tr(\mathcal{F}^\bullet)(xy) = \sum_{y \in G(E)/I}  \Tr(\mathcal{F}^\bullet)(xy)  \mathds{1}_K(y^{-1}) = (\Tr(\mathcal{F}^\bullet) * \mathds{1}_K)(x) .$$ The first equality follows from Theorem \ref{fslemma} (iii) and the proper base change theorem. The other two equalities follow from the definitions. 
\end{proof}

Since $\IC_{\mu_1}^{\N} * \IC_{\mu_2}^{\N} = \IC_{\mu_1+\mu_2}^{\N}$ there is a natural isomorphism of $\mathbb{F}_p$-algebras $$f \colon \mathbb{F}_p[X_*(T)^+] \rightarrow K_0(P_{L^+G}(\Gr_{\mathbb{F}_q}, \mathbb{F}_p)^{\N}) \otimes \mathbb{F}_p, \quad \mu \mapsto [\IC_\mu^{\N}].$$ The next proposition shows that this isomorphism is compatible with the mod $p$ Satake isomorphism.

\begin{prop} \label{applem3}
The composition 
$$\mathbb{F}_p[X_*(T)^+] \xrightarrow{f} K_0(P_{L^+G}(\Gr_{\mathbb{F}_q}, \mathbb{F}_p)^{\N}) \otimes \mathbb{F}_p \xrightarrow{\Tr} \mathcal{H}_K$$ is the inverse of the mod $p$ Satake isomorphism $\mathcal{S}$.
\end{prop}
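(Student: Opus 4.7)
The plan is to verify $\Tr(\IC_\mu^{\N}) = \mathcal{S}^{-1}(\mu)$ for each $\mu \in X_*(T)^+$. Both $f$ and the trace map are $\mathbb{F}_p$-algebra homomorphisms: for $f$, the identity $\IC_{\mu_1}^{\N} * \IC_{\mu_2}^{\N} \cong \IC_{\mu_1+\mu_2}^{\N}$ follows from the convolution identity $\IC_{\mu_1} * \IC_{\mu_2} \cong \IC_{\mu_1+\mu_2}$ recalled above together with the additivity $p(\mu_1) + p(\mu_2) \equiv p(\mu_1+\mu_2) \pmod 2$; for the trace map, this is Lemma \ref{applem1}. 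Since $\{\mu\}_{\mu \in X_*(T)^+}$ generates $\mathbb{F}_p[X_*(T)^+]$ as an $\mathbb{F}_p$-algebra, it is enough to check the equality on each basis element.

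Next I would compute $\Tr(\IC_\mu^{\N})$ pointwise. For $x \in \Gr_{\mathbb{F}_q}(\mathbb{F}_q)$, the stalk of $\IC_\mu^{\N} = \mathbb{F}_p[\dim \Gr_{\leq \mu}] \otimes \mathcal{L}^{\otimes p(\mu)}$ vanishes unless $x \in \Gr_{\mathbb{F}_q, \leq \mu}(\mathbb{F}_q)$, in which case it is one-dimensional, concentrated in cohomological degree $-\dim \Gr_{\leq \mu}$, with Frobenius acting by $(-1)^{p(\mu)}$ via the twist by $\mathcal{L}^{\otimes p(\mu)}$. The alternating sum defining $\Tr$ then produces $(-1)^{\dim \Gr_{\leq \mu}} \cdot (-1)^{p(\mu)} = 1$. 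Using the identification $\Gr_{\mathbb{F}_q, \leq \mu}(\mathbb{F}_q) = \bigsqcup_{\lambda \leq \mu} K\lambda(t)K/K$, I conclude that $\Tr(\IC_\mu^{\N}) = \sum_{\lambda \leq \mu} \mathds{1}_\lambda \in \mathcal{H}_K$.

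The last step is to identify $\sum_{\lambda \leq \mu}\mathds{1}_\lambda$ with $\mathcal{S}^{-1}(\mu)$. By Herzig's formula for the mod $p$ Satake isomorphism \cite{modpsatake}, the change-of-basis matrix from $\{\mathds{1}_\mu\}$ to $\{\mathcal{S}^{-1}(\mu)\}$ is unitriangular with respect to $\leq$, and the mod $p$ reduction of the classical Satake pins down all the coefficients to be $1$ in $\mathbb{F}_p$; equivalently, $\mathcal{S}\bigl(\sum_{\lambda \leq \mu} \mathds{1}_\lambda\bigr) = \mu$. The main obstacle is this final identification with Herzig's formula: the normalization $\IC_\mu^{\N} = \IC_\mu \otimes \mathcal{L}^{\otimes p(\mu)}$ was introduced precisely so that the perverse-shift sign $(-1)^{p(\mu)}$ is cancelled in the trace, allowing the geometric calculation to match Herzig's isomorphism on the nose without any further twist or sign correction.
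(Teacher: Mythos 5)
Your overall strategy is the same as the paper's: reduce to checking $\Tr(\IC_\mu^{\N}) = \mathcal{S}^{-1}(\mu)$ on the basis $\{\mu\}$, compute $\Tr(\IC_\mu^{\N}) = \sum_{\lambda \leq \mu}\mathds{1}_\lambda$ from the fact that $\IC_\mu$ is a (shifted) constant sheaf on $\Gr_{\mathbb{F}_q,\leq\mu}$ with the twist by $\mathcal{L}^{\otimes p(\mu)}$ cancelling the sign $(-1)^{\dim\Gr_{\leq\mu}}$, and then invoke Herzig's explicit formula $\mathcal{S}^{-1}\bigl(\sum_{\lambda\leq\mu}\mathds{1}_\lambda\bigr)=\mu$. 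The trace computation is fine (and your algebra-homomorphism remark is harmless but unnecessary, since $\{\mu\}$ is already an $\mathbb{F}_p$-basis of $\mathbb{F}_p[X_*(T)^+]$ and both maps are linear).

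The weak point is your justification of the final identification. The formula $\mathcal{S}^{-1}(\mu)=\sum_{\lambda\leq\mu}\mathds{1}_\lambda$ is not a formal consequence of unitriangularity plus ``the mod $p$ reduction of the classical Satake'': the mod $p$ Satake isomorphism is not the naive reduction of the classical (normalized) Satake transform, and what actually happens is that the forward matrix reduces mod $p$ to the M\"obius-type inverse of the all-ones unitriangular matrix --- this is Herzig's explicit computation in \cite[5.1]{modpclass} (your citation of \cite{modpsatake} points to the wrong paper), not something pinned down by triangularity alone. Moreover, two hypotheses in Herzig's statement do not match the present setting and have to be addressed, which is what most of the paper's proof is devoted to: Herzig works with antidominant cocharacters (one converts by multiplying by the longest Weyl group element, as in the proof of \cite[1.3]{modpGr}), and he assumes $G_{\text{der}}$ simply connected, whereas here $G_{\text{der}}$ is only absolutely almost simple; the paper observes that simple connectedness is used by Herzig only to reduce to the case of trivial weight, which is automatic here, so the formula holds for general $G$. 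Without these two points your last step, as written, rests on a statement proved under different hypotheses and on a heuristic that is not correct as stated.
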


\begin{proof}
When $G_{\text{der}}$ is simply connected, Herzig \cite[5.1]{modpclass} has computed $$\mathcal{S}^{-1} \left( \sum_{\lambda \leq \mu} \mathds{1}_{\lambda} \right) = \mu \in \mathbb{F}_p[X_*(T)^+].$$ While Herzig works with anti-dominant cocharacters, the formula above can be obtained from Herzig's formula by multiplying by the longest element of the Weyl group (see the proof of \cite[1.3]{modpGr}). Herzig only uses the hypothesis that $G_{\text{der}}$ is simply connected to reduce to the case where the weight (a representation of $G(\mathbb{F}_q)$) is trivial. As the weight is trivial in our situation, the above formula is valid for any $G$. Since $\IC_\mu$ is a constant sheaf supported on $\Gr_{\mathbb{F}_q, \leq \mu}$ then the lemma now follows from our choice of normalized IC complexes $\IC_{\mu}^{\N}$.
\end{proof}

\begin{rmrk}
In \cite{modpGr} we worked over an algebraically closed field and described a natural map of $\mathbb{F}_p$-vector spaces $K_0(P_{L^+G}(\Gr, \mathbb{F}_p)) \otimes \mathbb{F}_p \rightarrow \mathcal{H}_K$ which we proved is the inverse of the mod $p$ Satake isomorphism, and hence also an isomorphism of $\mathbb{F}_p$-algebras. It is possible to work over an algebraically closed field because, for $\mathcal{F}^\bullet \in P_{L^+G}(\Gr_{\mathbb{F}_q}, \mathbb{F}_p)^{\N}$, $\Tr(\mathcal{F}^\bullet)$ essentially counts the dimensions of the stalks of $\mathcal{F}^\bullet$. However, one advantage of working over $\mathbb{F}_q$ is that Lemma \ref{applem1} allows us to prove the existence of an isomorphism of $\mathbb{F}_p$-algebras $\mathbb{F}_p[X_*(T)^+] \cong \mathcal{H}_K$ without using the existence of the mod $p$ Satake isomorphism.
\end{rmrk}

\begin{myproof1}[Proof of Theorem \ref{mainapp} and Corollary \ref{maincor}] Theorem \ref{mainapp} follows immediately from Theorem \ref{mainthm} and Lemmas \ref{applem1}, \ref{applem2}. By \cite{Ollivier2014} the central integral Bernstein elements are uniquely determined by the identity $\mathcal{B} = \mathcal{C}^{-1} \circ \mathcal{S}^{-1}$, so Corollary \ref{maincor} follows from Theorem \ref{mainapp} and Proposition \ref{applem3}.
\end{myproof1}

\end{document}